\documentclass[reqno]{amsart}
\usepackage[left=2.7cm,right=2.7cm,top=3.5cm,bottom=3cm]{geometry}
\usepackage[english]{babel}
\usepackage{pifont}
\usepackage[latin1]{inputenc}
\usepackage[T1]{fontenc}
\usepackage{comment}
\usepackage{MnSymbol}
\usepackage{amsmath}
\usepackage{amsfonts}
\usepackage{amsthm}
\usepackage{amscd}
\usepackage{upgreek}
\usepackage[all]{xy}
\usepackage{graphicx}
\usepackage[usenames,dvipsnames]{color}
\usepackage{mathrsfs}
\usepackage{caption}
\usepackage{tikz-cd} 
\usepackage{braket}
\usepackage[
    colorlinks = true,
    linkcolor = black,
    urlcolor  = black,
    citecolor = blue,
    bookmarksopen = true,
    backref = page
]{hyperref}

\newcommand{\field}[1]{\mathbb{#1}}

\newcommand{\F}{\field{F}}
\newcommand{\C}{\field{C}}

\DeclareMathOperator{\End}{End}

\DeclareMathOperator{\Spec}{Spec}
\DeclareMathOperator{\Pic}{Pic}

\DeclareMathOperator{\vol}{vol}

\DeclareMathOperator{\disc}{disc}
\DeclareMathOperator{\PGL}{PGL}

\DeclareMathOperator{\GL}{GL}
\DeclareMathOperator{\calp}{\mathcal{P}}
\DeclareMathOperator{\nrd}{nrd}

\newtheorem{theorem}{Theorem}
\newtheorem{lemma}[theorem]{Lemma}
\newtheorem{proposition}[theorem]{Proposition}
\newtheorem{corollary}[theorem]{Corollary}
\newtheorem{remark}[theorem]{Remark}

\title{Duke for Drinfeld}
\author{Francesco Maria Saettone}
\address{Department of Mathematics, Weizmann Institute of Science, Israel}
\email{francesco.saettone@weizmann.ac.il}

\begin{document}
\begin{abstract}
We prove a function field analogue of Duke's equidistribution theorem for CM points,
in the setting of Drinfeld--Stuhler modular curves. Our results thus extend, to the
Drinfeld setting, both Duke's theorem on the modular curve and S.-W.\ Zhang's
equidistribution in the case of Shimura curves. Equidistribution is reduced via a
Weyl criterion to the decay of toric periods, which Waldspurger's formula expresses
through central values of automorphic $L$-functions, bounded in Lindel\"of-strength
form by the Riemann Hypothesis over function fields. We work at  arbitrary level structures and in every positive characteristic.
\end{abstract}

\maketitle

\setcounter{tocdepth}{2}
\tableofcontents

\section{Introduction}

\medskip
\noindent\textbf{Duke's theorem.}\quad 
An imaginary quadratic field $\mathbb{Q}\big(\sqrt{-d}\big)$, of discriminant
$-d<0$, carries finitely many ideal classes, and each of them, through the theory of
complex multiplication, deposits a point on the modular curve. As $d$ grows these
{\em special} points multiply and one may
ask where they choose to sit. Linnik, under an auxiliary congruence hypothesis on
$d$, was the first to prove that they equidistribute \cite{li}; Duke, building on Iwaniec's
bounds for Fourier coefficients of half-integral weight forms, removed every
hypothesis and established the result in full \cite{duke}: the Heegner points of
discriminant $-d$ become \emph{equidistributed}, with respect to the hyperbolic
measure, as $d\to\infty$.

The method has since become a template, in the form recorded by Michel and Venkatesh \cite[\S2.2]{michel-venkatesh2006}.  One tests equidistribution against an orthonormal basis of automorphic forms (the Weyl criterion), recognizes each Weyl sum as a \emph{toric period}, and invokes
Waldspurger's formula to convert the square of that period into a central value of an
$L$-function. Equidistribution is then equivalent to the decay of these central
values, and decay is precisely what a \emph{subconvexity} bound provides. This is the
route taken, in ever greater generality, by Shou-Wu Zhang 
and Einsiedler--Lindenstrauss--Michel--Venkatesh
\cite{swz,ELMV3}, among others. Subconvexity is the engine;
it is also, over a number field, the expensive part.
\\

Let $p$ be a prime ($p=2$ is allowed), $q$ a power of $p$, and consider the finite field $\mathbb F_q$. This paper asks Duke's question after replacing $\mathbb{Q}$ by the function field
$F=\mathbb{F}_q(C)$ of a smooth, projective, geometrically connected curve  $C$ over $\mathbb F_q$. The dictionary
between number fields and function fields is by now a venerable one, and it has a
recurring punchline: theorems that are hard over $\mathbb{Q}$ may sometimes be available over $F$, because the geometry of $C$ supplies tools that the
arithmetic of $\mathbb{Z}$ withholds. The most dramatic instance is the Riemann
Hypothesis, which over function fields is not a hope but a theorem of Weil for
curves, of Deligne in general \cite{deligne-weilII}, and, in the automorphic
incarnation we shall need, of Drinfeld and Lafforgue \cite{drinfeld-elliptic-modules,lafforgue}.

On the geometric side, the modular curve is replaced by a \emph{Drinfeld--Stuhler
modular curve}.  These are the moduli spaces of $D$-elliptic sheaves introduced by
Laumon--Rapoport--Stuhler \cite{laumon-rapoport-stuhler}, attached to a quaternion
algebra $D$ over $F$ that is split at the chosen place $\infty$. When $D$ is a
division algebra the resulting curve is proper, the function field avatar of a compact
Shimura curve; when $D\simeq M_2(F)$ one recovers the classical Drinfeld modular curve
\cite{gekeler-drinfeld-curves}, which has cusps. Both cases
carry CM points, indexed by the quadratic extensions $E/F$ that embed into $D$, and
both cases inherit a Galois action through global class field theory.

\medskip
\noindent\textbf{Equidistribution of Drinfeld--Stuhler CM packets.}\quad
We prove a function field Duke theorem in both Drinfeld--Stuhler regimes. Let $G:=D^\times/F^\times$ be the algebraic group attached to $D$ and denote by $F_\infty$ the completion of $F$ at $\infty$. The
measure-theoretic home of the statement is not a moduli stack but the adelic quotient
\[
        X_{U_\star}
        =
        G(F)\backslash G(\mathbb A_F)/U_\star
        \qquad
        \text{for}
        \qquad
    U_\star=T_\star(F_\infty)K_f,
\]
where $K_f\subset G(\mathbb A_{F,f})$ is an arbitrary fixed open compact subgroup and
$T_\star(F_\infty)\subset G(F_\infty)$ is the (compact)  torus fixing a
chosen point $z_\star\in\Omega(F_{\infty,\star})$ in the Drinfeld upper half plane $\Omega$, with $\star\in\{2,\mathrm{ram}\}$, i.e., the unramified ($\star=2$)
or a fixed ramified ($\star=\mathrm{ram}$) quadratic extension $F_{\infty,\star}$ of $F_\infty$. The quotient $X_{U_\star}$ thus remembers the
prescribed CM type at $\infty$ and carries the normalized probability
measure $\mu_{U_\star}$.

To an admissible CM datum $(E,R)$, i.e., an imaginary separable quadratic extension
$E/F$ with fixed local type
$E_\infty\simeq F_{\infty,\star}$\ , together with an order $R\subset E$
satisfying the fixed-level realizability condition, we attach a finite CM packet
in $X_{U_\star}$ and its normalized counting measure $\mu_{E,R,\star}$. Its
complexity is measured by the 
\[
        \mathfrak D_{E,R}
        =
        \mathfrak D_{E/F}\mathfrak f_R^2
\]
the product of the discriminant $\mathfrak D_{E/F}$ of the field $E$ and the square of
the conductor $\mathfrak f_R$ of the order $R$. Letting this discriminant grow is the function field counterpart of sending
$d\to\infty$.

\begin{theorem}[Theorems~\ref{thm:compact-adic-duke}
and~\ref{thm:split-adic-duke}]
\label{thm:intro-duke}
Let $D$ be a quaternion algebra over $F$, split at $\infty$, set
$G=D^\times/F^\times$, and fix a local CM type
$\star\in\{2,\mathrm{ram}\}$ and a finite level subgroup
$K_f\subset G(\mathbb A_{F,f})$ as above. Let $(E_n,R_n)$ be a sequence of admissible CM data
with
\[
        (E_n)_\infty\simeq F_{\infty,\star}
        \qquad
        \text{and}
        \qquad
        \deg\mathfrak D_{E_n,R_n}\to\infty
\]
satisfying the finite component-character condition described in
\S\ref{subsubsec:reduced-norm-component-characters}.
Then
$\mu_{E_n,R_n,\star}$ weakly-$*$ converges to $\mu_{U_\star}$ on $X_{U_\star}$.

This holds in both regimes:
\begin{enumerate}
\item if $D$ is a division algebra, then $X_{U_\star}$ is compact;
\item if $D\simeq M_2(F)$, then $G=\PGL_2$ and $X_{U_\star}$ has finite volume but
is noncompact. 
\end{enumerate}
\end{theorem}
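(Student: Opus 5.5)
We follow the Michel--Venkatesh template, with the Riemann Hypothesis of Drinfeld--Lafforgue standing in for subconvexity. The first step is a \emph{Weyl criterion} on $X_{U_\star}$. Because $X_{U_\star}$ is totally disconnected (a countable union of points once $U_\star$ is fixed), it suffices to test $\mu_{E_n,R_n,\star}$ against functions on $G(F)\backslash G(\mathbb A_F)/U'$ for open compact $U'\subset U_\star$ of finite index. Shrinking $K_f$ to $K_f'$ and pushing forward, we reduce to test functions $\phi$ that are right-invariant under some $T_\star(F_\infty)K_f'$. In $L^2(X_{U'},\mu)$ we decompose $\phi$ spectrally into three parts:
\begin{enumerate}
\item the constant function;
\item the one-dimensional characters $\chi\circ\nrd$, where $\chi$ is a quadratic or finite-order character of $F^\times\backslash\mathbb A_F^\times$ trivial on $\nrd(U')$;
\item cuspidal automorphic forms $\varphi\in\pi$, with $\pi$ running over the finitely many cuspidal $\pi$ having nonzero $U'$-fixed vectors. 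This set is finite, since $U'$ is open compact and we work in a function field.
\end{enumerate}
In the split case $D\simeq M_2(F)$ there is in addition Eisenstein/continuous spectrum. Since everything is $K'$-finite, only finitely many unramified-at-bounded-places Hecke characters $\mathbb A^\times/F^\times$ of bounded conductor occur, and their Mellin parameter ranges over a compact circle $s\in \mathbb{C}/\frac{2\pi i}{\log q}\mathbb Z$. To handle the noncompactness we will also need a truncation argument: we prove that the CM packet puts asymptotically no mass high in the cusp. This follows either from the Eisenstein bound below applied to incomplete Eisenstein series, or more simply from the observation that CM points of a fixed local type at $\infty$ lie in a bounded region of the Bruhat--Tits tree quotient, so escape of mass cannot occur.

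Next, each Weyl sum $\int\varphi\,d\mu_{E,R,\star}$ is a \emph{toric period}: an average of $\varphi(t g_{R})$ over $t\in E^\times\backslash\mathbb A_E^\times/\widehat R^\times E_\infty^\times$. We expand it over class group characters $\Omega$ of $\mathrm{Pic}(R)$, that is, characters of $\mathbb A_E^\times/E^\times \widehat R^\times E_\infty^\times\mathbb A_F^\times$. The one-dimensional terms $\chi\circ\nrd$ give exactly the component characters. Their sums vanish or stay bounded away from the mean according to whether $\chi\circ N_{E/F}$ is trivial on the packet, and this is precisely what the finite component-character condition of \S\ref{subsubsec:reduced-norm-component-characters} is designed to exclude. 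For cuspidal $\pi$ (transferred to $\PGL_2$ by Jacquet--Langlands), Waldspurger's formula gives
\[
\Big|\int \varphi\,\Omega\Big|^2 \asymp \frac{L(1/2,\pi_E\otimes\Omega)}{L(1,\pi,\mathrm{Ad})\,L(1,\eta_E)}\,\frac{\prod_v \alpha_v}{|\mathfrak D_{E,R}|^{1/2}},
\]
where the local factors $\alpha_v$ are uniformly bounded for $v$ outside the level, and at places dividing $\mathfrak f_R$ they are controlled by the local computations available earlier (with a decay factor in the conductor exponent).

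For the \emph{main estimate}, $L(s,\pi_E\otimes\Omega)$ is a polynomial in $q^{-s}$ of degree $\asymp\deg\mathfrak D_{E,R}+O(1)$. By Drinfeld/Lafforgue it is pure, so all inverse roots have absolute value $q^{1/2}$. Counting divisors then gives the Lindel\"of-strength bound $L(1/2)\ll_\varepsilon q^{\varepsilon\deg\mathfrak D}$. On the other side, $L(1,\eta_E)$ is bounded below by $\gg q^{-\varepsilon \deg\mathfrak D}$, again by RH (the function field Siegel bound is effective), and $L(1,\pi,\mathrm{Ad})$ is a constant because only finitely many $\pi$ occur. Averaging over the $|\mathrm{Pic}(R)|\asymp q^{\deg\mathfrak D/2}L(1,\eta_E)$ characters $\Omega$ therefore yields a Weyl sum that is $\ll q^{-(1/4-\varepsilon)\deg\mathfrak D}\to0$.

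The Eisenstein part is treated in the same way. Its toric period unfolds via Hecke's integral to $L(1/2+s,\eta_E\chi)L(1/2-s,\chi)$-type products divided by $L$-values of $E$, which decay by the same RH bound, uniformly for $s$ on the compact circle. We expect the \emph{main obstacle} to be making the local factors $\alpha_v$ uniform, at places where $E_v$ is ramified or $R$ is nonmaximal, and at $\infty$ for both types $\star$. These factors need a polynomial decay in $|\mathfrak f_R|_v$ at least as strong as the global normalization requires. We would first try to compute them explicitly using the local test vector theory of Gross--Prasad/File--Martin--Pitale, where toric periods of new vectors along orders of conductor $\varpi^c$ scale like $q_v^{-c/2}$ up to constants. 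This suffices, because RH gives a power saving, so any bounded loss per place is acceptable.
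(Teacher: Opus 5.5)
Your skeleton matches the paper's: Weyl criterion; a split into constants, reduced-norm component characters, cuspidal and Eisenstein spectrum; Weyl sum $=$ toric period; Waldspurger plus RH--Lindel\"of plus a lower bound for $\#\calp_{E,R}$; Hecke unfolding. Your component-character step is Lemma~\ref{lem:component-character-averages}. However, two structural claims are false.

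First, $X_{U_\star}$ is not a countable union of points, and there is no open compact $U'\subset U_\star$. The torus $T_\star(F_\infty)$ is compact but has empty interior in $\PGL_2(F_\infty)$. So, for $\star=2$, the fibres of $X_{U_\star}$ over vertices of the Bruhat--Tits tree $\mathcal T$ are finite quotients of the uncountable profinite set $\PGL_2(\mathcal O_\infty)/T_\star(F_\infty)$. Functions that are merely right $T_\star(F_\infty)K_f'$-invariant can involve infinitely many cuspidal $\pi$: in the compact case $L^2(X_{U_\star})$ is infinite-dimensional while $\dim\pi_\infty^{T_\star(F_\infty)}\le1$. Your bounds are not uniform in $\pi$. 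The finiteness you need comes from local constancy instead. A locally constant test function is also invariant under some open compact $J$, so it factors through the quotient by the open compact group $U_\star J\supseteq U_\star$; the inclusion goes the other way from yours. This is the reduction used in Lemma~\ref{l:spectral-reduction-compact} and in the proof of Theorem~\ref{thm:split-adic-duke}.

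Second, CM points of fixed type at $\infty$ do \emph{not} stay in a bounded part of the quotient tree. Take $A=\F_q[T]$ with $q$ odd, $d\in A$ of degree $2m$ with non-square leading coefficient, $E=F(\sqrt d)$, and $R=A[\sqrt d]$ embedded by $\sqrt d\mapsto\left(\begin{smallmatrix}0&d\\1&0\end{smallmatrix}\right)$; this is realizable with $g_{R,f}=1$ at maximal level. The packet point of the trivial class lies over the vertex $[\mathcal O_\infty\oplus\varpi^m\mathcal O_\infty]$ fixed by $E_\infty^\times$, where $\varpi$ is a uniformizer at $\infty$. That vertex sits at height $m=\deg d/2$ on the half-line $\PGL_2(A)\backslash\mathcal T$. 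So non-escape of mass must come from the Eisenstein analysis. On the other hand, no separate truncation is needed: testing against $C_c^\infty(X_U)$ and constants already gives weak-$*$ convergence to the probability measure $\mu_U$ (Lemma~\ref{lem:weyl-criterion-noncompact}).

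The step you defer as the main obstacle is where the paper does its work, and it does not use explicit newvector formulas. At the finitely many level places you need arbitrary fixed $K_{f,v}$-invariant vectors translated by arbitrary $g_{R,v}$. The bound must be uniform over all $E_v/F_v$, which in residue characteristic $2$ are infinitely many with unbounded discriminant exponent. File--Martin--Pitale-type computations do not supply this. The paper instead proceeds as follows.
\begin{enumerate}
\item At split places it uses Lafforgue's temperedness to dominate the matrix coefficient by the Harish-Chandra function. The uniform building estimate of Lemma~\ref{lem:elmv-local-building-integral-n2} then gives $\alpha_v^\natural/\vol(K_{T,R,v})^2\ll(n_v+1)^Aq_v^{a_v/2}q_v^{(1-\eta)b_v}$ for \emph{some} $\eta>0$ (Lemma~\ref{lem:split-local-matrix-coefficient-estimate}).
\item At division places $R_v=\mathcal O_{E_v}$ is forced, and the trivial bound suffices (Lemma~\ref{lem:division-place-filtration-estimate}).
\item At $\infty$ there is no obstacle: its factor is a fixed constant once $E_\infty\simeq F_{\infty,\star}$.
\end{enumerate}
Against $\#\calp_{E,R}\gg D(E,R)^{1/2-\varepsilon}$ this yields $|W|^2\ll D_E^{-1/2+\varepsilon}\Delta_R^{-\eta+\varepsilon}$. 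That suffices for the theorem, but your exponent $1/4$ in the conductor aspect would require $\eta=1/2$.

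The Eisenstein part needs the analogous local input, which your sketch lacks. The period is $L_E(\tfrac12+s,\omega\circ N_{E/F})=L_F(\tfrac12+s,\omega)L_F(\tfrac12+s,\omega\eta_E)$ divided by $L_F(1+2s,\omega^2)$, times local integrals; it is not a quotient by $L$-values of $E$. RH only bounds the numerator by $D_E^{\varepsilon}$. The local integrals must then be shown to be $\ll D(E,R)^\varepsilon\Delta_R^{1/2-\eta}\vol(K_{T,R})$ before dividing by $\#\calp_{E,R}$. The paper does this in Proposition~\ref{prop:eis-local-subconvex}, via the ELMV estimate and Tate's local functional equation.

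Smaller points:
\begin{enumerate}
\item No expansion over $\Omega$ is needed, since the Weyl sum is the single period with $\Omega=1$.
\item Purity alone gives only $|L(\tfrac12)|\le2^N$, which is a fixed power of the conductor. Lindel\"of needs the Littlewood-type argument under RH, not divisor counting.
\item In characteristic $2$ the Waldspurger formula you invoke must be Qiu's.
\end{enumerate}
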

Each point of the packet has a prescribed local nature, see Remark~\ref{r:cm-local}. At the distinguished place
$\infty$, the condition $(E_n)_\infty\simeq F_{\infty,\star}$ forces the CM point to
sit at the fixed point of the elliptic torus $T_\star(F_\infty)$ in the Drinfeld
upper half-plane: this is the local CM type, unramified ($\star=2$) or ramified
($\star=\mathrm{ram}$), that the quotient $X_{U_\star}$ is built to remember. At the finite places, the order in the datum and the level $K_f$ prescribe the
remaining local data, assembled into the finite adelic coordinate of the point. 

The finite component-character condition costs nothing once the field discriminant
itself grows, namely $\deg\mathfrak D_{E_n/F}\to\infty$,
since $\mathfrak X_{U_\star}$ is finite and depends only on the fixed level. It is a
real condition only in families where the field $E_n$ is fixed and the conductor of
$R_n$ alone tends to infinity.

When the finite level $K_f$ descends from a level subgroup
$K_I\subset D^\times(\mathbb A_{F,f})$, the adelic statement acquires a geometric
meaning through rigid-analytic uniformization, see also the diagram \eqref{diagram}.  We underline that, as explained in Remark~\ref{r:XU-not-Bun2}, this is not an equidistribution statement on $\operatorname{Bun}_2$, the moduli space of $\PGL_2$-bundles  over $\mathbb P^1_{\mathbb F_q}$, nor on $\operatorname{Bun}_2$ with level.

Consider $\Delta_I
        :=
        F^\times\backslash\mathbb A_F^\times/
        \bigl(F_\infty^\times(K_I\cap\mathbb A_{F,f}^{\times})\bigr)$.
Then $X_{U_\star}$ maps canonically to the central quotient
\[
        M_{\mathcal D,I}^{\operatorname{an}}(\mathbb C_\infty)/\Delta_I,
\]
where $\mathbb C_\infty$ is the completion of an algebraic closure of $F_\infty$, and there the theorem becomes equidistribution of the corresponding projectivized
analytic CM packets.  The full level-$I$ curve
\[
        M_{\mathcal D,I}^{\operatorname{an}}(\mathbb C_\infty)
\]
is reached by passing to the lifted quotient $\widetilde X_{I,\star}$.  To our
knowledge, the compact case is the first instance of Duke's theorem over a function
field in the quaternionic, compact Shimura-curve setting.

\medskip
\noindent\textbf{The shape of the proof.}\quad
The architecture is the one explained  in
\cite[\S2.2]{michel-venkatesh2006}. A Weyl criterion reduces equidistribution to the decay of Weyl sums; the packet
sums are identified with toric periods $P_\chi^D(f)$ and Qiu's function field
Waldspurger formula \cite{qiu} expresses the square of such a period as
\[
        |P_\chi^D(f)|^2
        =
        \frac{L(2,1_F)\,L\!\left(\tfrac12,\Pi_E\otimes\chi\right)}
             {2\,L(1,\Pi,\operatorname{Ad})}
        \prod_v\alpha_v^\natural(f_v,\overline{f}_v;\chi_v),
\]
Here $f$ is an automorphic test vector, $\chi$ is a character of the finite CM packet, $\Pi$ is the $\GL_2$-automorphic representation attached to the automorphic representation generated by $f$, $\Pi_E$ is its base change to $E$,
$1_F$ is the trivial character of $F^\times\backslash\mathbb A_F^\times$, $\operatorname{Ad}$ denotes the adjoint $L$-function, and $\alpha_v^\natural$ are the normalized local Waldspurger factors. This formula turns the analytic problem into one about central values $L(\tfrac12,\Pi_E\otimes\chi)$. We stress that our results are unconditional in every positive characteristic, including the
even case. This rests on Qiu's function field Waldspurger formula \cite{qiu}, valid in every positive characteristic, in contrast to the previous formula of Chuang--Wei \cite{chuang-wei}, which holds in odd characteristic only.

Here is where the two worlds part company. Over a number field, one now needs a
genuine subconvexity bound, and earning it is the hard labor of the subject.
Over $F$ the same bound is essentially free: the completed $L$-function is a
polynomial in $q^{-s}$ whose zeros, by the Riemann Hypothesis of Drinfeld and
Lafforgue, lie on the critical line, and the Grothendieck--Ogg--Shafarevich formula
\cite{sga5} controls its degree by the conductor. A textbook Littlewood argument
under RH \cite{iwaniec-kowalski} then yields a Lindel\"of-strength bound, which is
stronger than any subconvexity exponent one would have fought for. 

This is emphatically not to say the proof is free, as we deal with the following three issues.
\\\emph{First}, the local periods $\alpha_v^\natural$ must be controlled uniformly as
the datum $(E,R)$ varies, which requires a careful comparison of the valuation
filtrations on $D$ and on $E$, delicate precisely at the non-maximal orders and at
the places where $D$ ramifies, and valid even in residue characteristic $2$. 
\\\emph{Second}, the cuspidal estimate does not see the
one-dimensional automorphic spectrum; this finite remainder is governed by the
reduced norm and dispatched by an explicit analysis of component characters.
\\\emph{Third}, and only in the split case, one must contend with the continuous
spectrum: the toric period of an Eisenstein series is unfolded to an abelian Hecke
zeta integral, whose global aspect is again controlled by the function field RH and
whose local aspect rests on the uniform bounds of \cite{ELMV3}.

\medskip
\noindent\textbf{Organization.}\quad
Section~\ref{sec:drinfeld-modules} recalls the Drinfeldian setting: $D$-elliptic
sheaves and Drinfeld--Stuhler modules in the sense of
\cite{laumon-rapoport-stuhler}, the moduli curves and their rigid analytic
uniformization at $\infty$, the CM points and their adelic packets, and the
homogeneous quotient $X_U$ on which the limiting measures live.

Section~\ref{sec:3} carries out the equidistribution argument. We first assemble
the analytic input common to both cases: a Weyl criterion reduces equidistribution
to the decay of packet sums, these are identified with toric periods, and Qiu's
function field Waldspurger formula \cite{qiu} expresses the square of each period
through a central value $L(\tfrac12,\Pi_E\otimes\chi)$, after fixing the local
normalizations that control the periods $\alpha_v^\natural$ uniformly in the datum
$(E,R)$. We then gather the function field Lindel\"of bound, a consequence of
the Riemann Hypothesis of Drinfeld and Lafforgue
\cite{drinfeld-elliptic-modules,lafforgue} together with the conductor-degree
control of \cite{sga5} and deduce the resulting cuspidal decay. With this in
hand we prove the compact theorem, and finally the split theorem, where the
continuous spectrum is absent from the compact case and requires a separate analysis
of the Eisenstein contribution, resting globally again on the function field RH and
locally on the uniform bounds of \cite{ELMV3}.

\medskip
\noindent\textbf{Applications and future directions.}\quad
Equidistribution of CM points is rarely an end in itself; over number fields it has
become an important input to problems in Diophantine geometry. The
distribution of Heegner points and of singular moduli underlies, for instance,
Habegger's theorem that only finitely many singular moduli are algebraic units
\cite{habegger}, and Binyamini's effective results toward Andr\'e--Oort in
$Y(1)^n$ \cite{gal}. We expect the function field theorems proved here to play the
analogous role for Drinfeld and Drinfeld--Stuhler modules: a first application will be mentioned in \cite{bns}.

A more immediate direction is internal to the present circle of ideas. In
\cite{sae,sae2} we studied equidistribution of CM points on
special fibers  of Shimura curves: it would be natural to carry that work over to
the Drinfeld--Stuhler setting, as partially done by \cite{alvarado} and \cite{egmpz}, which are analogues of the equidistribution studied in \cite{michel}.

Taking a step further, we hope in a future work to investigate a Drinfeld analogue of the
\emph{mixing conjecture} of Michel and Venkatesh \cite[Conjecture~2]{michel-venkatesh2006}, which predicts the joint
equidistribution of pairs of CM points. Over number fields this question has
seen striking recent progress, in Khayutin's proof of joint equidistribution under a
splitting hypothesis \cite{kha} and in the resolution of the mixing conjecture under
GRH by Blomer, Brumley, and Khayutin \cite{brumley}. 
Over function fields, the first
analogue was studied by Shende and Tsimerman \cite{shende-tsimerman} for $\PGL_2$-bundles on $\mathbb P^1_{\F_p}$ (and subsequently established in \cite{sawin}).  A Drinfeld--Stuhler counterpart seems to us an especially natural extension: the moduli of $D$-elliptic sheaves  sits within the framework of shtukas for division algebras
\cite{laumon-rapoport-stuhler}, which generalize Drinfeld modules and provide the
function field analogue of certain quaternionic Shimura varieties \cite[\S3]{yun}.

\subsubsection*{Acknowledgments}
Some of the ideas from which this work stems were developed during a visit to
Institut de Math\'ematiques de Jussieu--Paris Rive Gauche: I thank Farrell Brumley
for his hospitality and for interesting discussions. The author was supported by the ERC, SharpOS, 101087910 and  ISF grants 2067/23, 1963/20.

\section{Drinfeldian setting}
\label{sec:drinfeld-modules}

Let us here recall some standard notation we will freely use throughout this work.

Let $C/\mathbb F_q$ be a smooth, projective, geometrically connected curve with
function field
$F=\mathbb F_q(C)$.
\\Fix a closed point $\infty\in C$, and put
\[
        A:=\Gamma(C-\{\infty\},\mathcal O_C).
\]
Thus $A$ is the Dedekind ring of functions regular away from $\infty$. 
A separable quadratic extension $E/F$ will be called {\em imaginary}, with respect to the fixed place $\infty$, if
\[
        E_\infty:=E\otimes_FF_\infty
\]
is a field.   
We write
$F_v$ for the completion of $F$ at a place $v$, $\mathcal O_v$ for its ring of
integers, and
\[
        \mathbb A_F,\qquad
        \mathbb A_{F,f},\qquad
        \widehat A:=\prod_{v\neq\infty}\mathcal O_v
\]
for the full adele ring, the  adeles away from $\infty$, and the profinite completion of $A$.

If $S$ is an $\mathbb F_q$-scheme, set
$C_S:=C\times_{\mathbb F_q}S$.
We denote by $\sigma_S\colon S\rightarrow S$
the absolute $q$-Frobenius, and by
\[
\tau:=\operatorname{id}_C\times\sigma_S\colon C_S\rightarrow C_S
\]
the Frobenius acting only on the second factor.

An $A$-field is a pair $(L,\gamma)$, where $L$ is a field and $\gamma\colon A\rightarrow L$
is an $\mathbb F_q$-algebra homomorphism.  We shall work in {\em generic
$A$-characteristic}, i.e., $\ker(\gamma)=0$.
The map $\gamma$ then defines a characteristic point $c\colon\Spec L\rightarrow \Spec A\subset C$,
and we write $\Gamma_c\subset C_L$
for its graph.

We shall also denote by $\mathbb{C}_\infty$  the completion of an algebraic closure of $F_\infty$.

\subsection{Drinfeld--Stuhler modules}

\subsubsection{$D$-elliptic sheaves}
\label{subsubsec:D-elliptic-sheaves}

In this subsection, let $D$ be a quaternion algebra over $F$, split
at $\infty$, and let $\mathcal D$
be a maximal $\mathcal O_C$-order in $D$ such that $\text{H}^0(C-\{\infty\},\mathcal D)=\mathcal O_D$.
For an $\mathbb F_q$-scheme $S$, set
\[
        \mathcal D_S:=\mathcal D\boxtimes\mathcal O_S
\]
where we recall that $\boxtimes$ denotes the usual external tensor product.

Following \cite{laumon-rapoport-stuhler}, a {\em $D$-elliptic sheaf} over $S$, with pole at $\infty$, is a pair
\[
        (\mathcal E,\psi)
\]
where $\psi:S\rightarrow C-\bigl(\{\infty\}\cup\operatorname{Ram}(D)\bigr)$
is the characteristic morphism, and
\[
        \mathcal E
        =
        \bigl((\mathcal E_i)_{i\in\mathbb Z},
              (j_i)_{i\in\mathbb Z},
              (t_i)_{i\in\mathbb Z}\bigr)
\]
is the following data.

Each $\mathcal E_i$ is a locally free $\mathcal O_{C_S}$-module of rank $4$,
equipped with a right $\mathcal D_S$-action compatible with the
$\mathcal O_C$-action.  The maps
\[
        j_i:\mathcal E_i\hookrightarrow\mathcal E_{i+1}
        \qquad
        \text{and}
        \qquad
        t_i:\sigma^*\mathcal E_i\hookrightarrow\mathcal E_{i+1}
\]
are injective $\mathcal D_S$-linear morphisms.  They are required to commute, in the
sense that
\[
        j_i\circ t_{i-1}
        =
        t_i\circ \sigma^*j_{i-1}
        \colon
        \sigma^*\mathcal E_{i-1}\rightarrow \mathcal E_{i+1}.
\]

The following conditions are imposed for every $i\in\mathbb Z$.

\begin{enumerate}
\item \emph{Periodicity.}  Set $\ell:=2\deg(\infty)$.
Then
\[
        \mathcal E_{i+\ell}=\mathcal E_i(\infty),
        \qquad
        \mathcal E_i(\infty):=
        \mathcal E_i\otimes_{\mathcal O_{C_S}}
        \mathcal O_{C_S}(\infty\times S),
\]
and the composite
\[
        j_{i+\ell-1}\circ\cdots\circ j_i:
        \mathcal E_i\rightarrow \mathcal E_{i+\ell}
        =
        \mathcal E_i(\infty)
\]
is the canonical inclusion.

\item \emph{Pole condition.}  The cokernel of $j_i$ is supported on
$\infty\times S$.  If $p_S:C_S\rightarrow S$ is the projection, then $p_{S,*}\operatorname{coker}(j_i)$
is locally free of rank $2$ over $S$.

\item \emph{Zero condition.}  Let
\[
        \Gamma_\psi\colon S\rightarrow C_S
\]
be the graph of $\psi$.  The cokernel of $t_i$ is supported on
$\Gamma_\psi(S)$; equivalently, there is a locally free $\mathcal O_S$-module
$\mathcal L_i$ of rank $2$ such that
\[
        \operatorname{coker}(t_i)\simeq(\Gamma_\psi)_*\mathcal L_i.
\]
\end{enumerate}

The morphism $\psi$ is also called the zero of the $D$-elliptic sheaf.
When no confusion is possible, we suppress $\psi$ and simply write
$\mathcal E$.

A morphism  $\mathcal E\rightarrow \mathcal F $
of $D$-elliptic sheaves over $S$ is
an integer $n\in\mathbb Z$, together with $\mathcal D_S$-linear morphisms
$ f_i\colon\mathcal E_i\rightarrow \mathcal F_{i+n}$, for  $i\in\mathbb Z$,
compatible with the $j$-maps and the $t$-maps.  Thus $f_{i+1}\circ j_i=j'_{i+n}\circ f_i$ and $f_{i+1}\circ t_i=t'_{i+n}\circ\sigma^*f_i$.
An isomorphism is a morphism admitting an inverse of the same kind.

If $S=\Spec R$, then the pole condition implies that
\[
        H^0\bigl((C-\{\infty\})\times S,\mathcal E_i\bigr)
\]
is independent of $i$.  The maps $t_i$ induce a Frobenius operator on this
module, and the right $\mathcal D$-action gives an action of
\[
        \mathcal O_D=\text{H}^0(C-\{\infty\},\mathcal D).
\]
Thus a $D$-elliptic sheaf gives a Drinfeld--Stuhler
$\mathcal O_D$-module.  Conversely, over generic $A$-characteristic,
Drinfeld--Stuhler $\mathcal O_D$-modules and $D$-elliptic sheaves are equivalent,
up to the harmless shift of the index $i$.

\subsubsection{Drinfeld--Stuhler modules}
\label{subsubsec:drinfeld-stuhler-modules}

Let $D$ be a central simple $F$-algebra of degree $2$, so that $\dim_FD=4$
and assume throughout this subsection that $D$ is split at $\infty$.  Fix a
maximal $A$-order
\[
        \mathcal O_D\subset D.
\]

For an $A$-field $L$, one has $\End_{\mathbb F_q}(\mathbb G_{a,L}^d)
\simeq M_d(L\{\tau\})$.
Let $\partial:M_d(L\{\tau\})\rightarrow M_d(L)$
be the constant-term map.

Following \cite[Def.~2.1]{papikian-drinfeld-stuhler}, a {\em Drinfeld--Stuhler $\mathcal O_D$-module} over $L$ is an embedding of rings
\[
        \varphi\colon \mathcal O_D\hookrightarrow M_d(L\{\tau\}),
        \qquad
        b\mapsto \varphi_b,
\]
satisfying the following two conditions.

First, for every $b\in\mathcal O_D\cap D^\times$, the endomorphism
\[
        \varphi_b\colon\mathbb G_{a,L}^2\rightarrow\mathbb G_{a,L}^2
\]
is an isogeny and its kernel is a finite group scheme over $L$ of order $\#(\mathcal O_D/\mathcal O_D b)$.
Second, for $a\in A$, viewed as the center of $\mathcal O_D$, one has $\partial(\varphi_a)=\gamma(a)I_d$.

A morphism of Drinfeld--Stuhler modules $\varphi\rightarrow\psi$ is an element
$u\in M_d(L\{\tau\})$
such that $u\varphi_b=\psi_bu$
for all $b\in\mathcal O_D$.
The endomorphism ring is therefore
\[
        \End_L(\varphi)
        =
        \{u\in M_d(L\{\tau\}):u\varphi_b=\varphi_bu
        \text{ for all }b\in\mathcal O_D\}.
\]
\begin{remark}
{\em
When $D=M_d(F)$ and $\mathcal O_D=M_d(A)$, Morita equivalence identifies
Drinfeld--Stuhler $\mathcal O_D$-modules with Drinfeld $A$-modules of rank $d$
\cite[Theorem~2.20 and Remark~2.21]{papikian-drinfeld-stuhler}.  When $D$ is
division, Drinfeld--Stuhler modules exist only over fields which split $D$
\cite[Lemma~2.5 and Remark~2.7]{papikian-drinfeld-stuhler}.
}
\end{remark}

\subsubsection{$D$-elliptic sheaves with CM}
\label{subsubsec:CM-drinfeld-stuhler}

Let $\mathcal E$ be a $D$-elliptic sheaf over
$\mathbb{C}_\infty$.  The central $A$-action on $\mathcal E$ gives a canonical embedding $A\hookrightarrow \End(\mathcal E)$.
We write
\[
        \End^0(\mathcal E)
        :=
        \End(\mathcal E)\otimes_A F
\]
for the rational endomorphism algebra.  Qiu's structure lemma shows that
$\End^0(\mathcal E)$ is a commutative field extension of $F$, of degree at most
$2$, admitting an $F$-embedding into $D$; moreover
\[
        \End^0(\mathcal E)\otimes_F F_\infty
\]
is a field, i.e. the extension is nonsplit at $\infty$
\cite[Lemma~2.2.1]{qiu}.

\subsection{Moduli spaces}
\label{subsec:moduli-and-adeles}

\subsubsection{Drinfeld--Stuhler  modular curves}
\label{subsubsec:drinfeld-stuhler-curves}

Recall that $D$ is a quaternion algebra over $F$, split at $\infty$.  Let
$\operatorname{Ram}(D)$ denote the finite set of places where $D$ is ramified.

Let $I\subset C-\{\infty\}$ be a nonempty finite closed subscheme.  
\begin{remark}{\em
Our work is in a generality that does not require to assume that $I$ is disjoint from $\operatorname{Ram}(D)$. }
\end{remark}
The level-$I$ moduli problem is considered over schemes $S$ whose characteristic morphism $\psi\colon S\rightarrow C-\big(\{\infty\}\cup\operatorname{Ram}(D)\big)$
has image disjoint from $I$.  

A level-$I$ structure
on a $D$-elliptic sheaf $\mathcal E$ is a trivialization
\[
        \mathcal D_I\boxtimes\mathcal O_S
        \xrightarrow{\ \sim\ }
        \mathcal E|_{I\times S},
        \qquad
        \mathcal D_I:=\mathcal D\otimes_{\mathcal O_C}\mathcal O_I,
\]
compatible with the $\mathcal D$-action and Frobenius.

Let
\[
        M_{\mathcal D,I}
\]
denote the corresponding moduli stack. By 
\cite[Theorems~4.1, 5.1 and Corollary~6.2]{laumon-rapoport-stuhler}, it is a smooth Deligne--Mumford stack of
relative dimension $1$ over
\[
        C-\bigl(I\cup\operatorname{Ram}(D)\cup\{\infty\}\bigr),
\]
and, for nontrivial level, is represented by a smooth quasi-projective curve.  If
$D$ is a division algebra, the curve is proper.  If $D\simeq M_2(F)$, one obtains
the usual Drinfeld modular curve, which is non-proper and has cusps (as extensively discussed in \cite{gekeler-drinfeld-curves}).

\subsubsection{Rigid analytic uniformization at \texorpdfstring{$\infty$}{infinity}}
\label{subsubsec:uniformization-infinity}

Let
\[
        \Omega
        :=
        \mathbb P^1(\mathbb{C}_\infty)-\mathbb P^1(F_\infty)
\]
be the Drinfeld upper half-plane.  After fixing an isomorphism
$D_\infty:=D\otimes_FF_\infty\simeq M_2(F_\infty)$,
the rigid analytic uniformization of the Drinfeld--Stuhler curve takes the form
\begin{equation}\label{e:analytic-unif}
        M_{\mathcal D,I}^{\operatorname{an}}(\mathbb{C}_\infty)
        \simeq
        D^\times(F)\backslash
        \left(
        \Omega\times D^\times(\mathbb A_{F,f})/K_I
        \right)
\end{equation}
where $K_I\subset D^\times(\mathbb A_{F,f})$ is the open compact subgroup determined
by the level structure (see \cite[Thm.~4.4.11]{blum-stuhler}).
Equivalently, after choosing representatives $d_1,\dots,d_h$ for
$D^\times\backslash D^\times(\mathbb A_{F,f})/K_I$,
one obtains a decomposition
\[
        M_{\mathcal D,I}^{\operatorname{an}}(\mathbb{C}_\infty)
        \simeq
        \bigsqcup_{i=1}^h
        \Gamma_i\backslash\Omega,
        \qquad
        \Gamma_i:=D^\times\cap d_iK_Id_i^{-1}.
\]
If $D$ is division, these quotients are cocompact.  If $D\simeq M_2(F)$, they are
the analytic components of the split Drinfeld modular curve, which are non-compact.

\subsubsection{CM points}
Let $E/F$ be an imaginary quadratic extension, i.e. a separable quadratic extension nonsplit at $\infty$. We say that
$\mathcal E$ has {\em CM} (complex multiplication) by $E$ if
\[
        \End(\mathcal E)\otimes_A F\simeq E .
\]
A point on a Drinfeld--Stuhler modular curve is called a CM point by $E$ if the
corresponding $D$-elliptic sheaf has CM by $E$
\cite[Definition~2.2.2]{qiu}.

The modular interpretation is compatible with the adelic action.  If $x$ is a CM
point represented by $\mathcal E$, then the action of $\bigl(\End(\mathcal E)\otimes_A F\bigr)^\times$
on the level data fixes $x$.  Hence, after fixing  an embedding $\rho\colon E\hookrightarrow D$,
the point $x$ lies in the fixed locus of $\rho(E^\times)$.  Conversely, a point
fixed by $\rho(E^\times)$ corresponds to a $D$-elliptic sheaf with CM by $E$
\cite[Lemmata~2.2.3--2.2.4]{qiu}.  Thus, on the full adelic tower, we obtain the ind-scheme
\[
        \operatorname{CM}
        =
        \bigcup_{\rho\colon E\hookrightarrow D}
        M(\mathbb{C}_\infty)^{\rho(E^\times)}
\]
see \cite[(2.6)]{qiu}.

Under the rigid analytic uniformization at $\infty$, this fixed-point description
becomes explicit.  Since $E_\infty/F_\infty$ is a field, the image of
$E_\infty^\times$ in $\PGL_2(F_\infty)$ is an elliptic torus and fixes a point
\[
        z_E\in\Omega(\mathbb{C}_\infty).
\]
After fixing the relevant embedding,
CM points are represented by adelic translates of this torus-fixed point:
\[
[\,g_\infty z_E,g_f\,]
\qquad 
\text{where}
\qquad
g=(g_\infty,g_f)\in D^\times(\mathbb A_F).
\]
in the notation of the adelic tower \cite[\S2.2.3]{qiu}.  At a fixed finite level,
one obtains the corresponding finite image on the quotient modular curve.

Finally,  CM points are algebraic: they are defined over
$E^{\mathrm{ab}}$, and the natural adelic action of $\mathbb A_E^\times/E^\times$
on the CM locus agrees with the Galois action through the reciprocity map of class
field theory \cite[Proposition~2.2.9 and Corollary~2.2.10]{qiu}.

\subsubsection{The adelic quotient and its measure}
\label{subsubsec:canonical-measure-CM-packets}

The passage to an adelic quotient is of the utmost necessity.  With the induced $\C_\infty$-topology, the space $\Omega$
is not locally compact, as $\mathbb{C}_\infty$ is not, and therefore it cannot carry the canonical Haar
measure theory needed for a Duke-type equidistribution statement.  We consequently replace the relevant locus by a locally compact adelic homogeneous quotient, on which quotient Haar measure is available.

  We keep the same notation
$D$ as in the preceding uniformization section.  Thus, in the compact
Drinfeld--Stuhler case, $D$ is the quaternion algebra defining the curve, while in
the split Drinfeld modular case we take
$D=M_2(F)$.
Consider the algebraic group
\[
        G:=D^\times/F^\times .
\]
After fixing an isomorphism $D_\infty\simeq M_2(F_\infty)$,
we identify $G(F_\infty)\simeq \PGL_2(F_\infty)$.
We also fix, once and for all, a quadratic field extension
\[
        F_{\infty,\star}/F_\infty,
        \qquad
        \star\in\{2,\mathrm{ram}\}
\]
where $F_{\infty,2}/F_\infty$ denotes the unramified quadratic extension, and
$F_{\infty,\mathrm{ram}}/F_\infty$ denotes a fixed ramified quadratic extension.
Choose
\[
        z_\star\in
        \Omega(F_{\infty,\star})
        =
        \mathbb P^1(F_{\infty,\star})-\mathbb P^1(F_\infty).
\]
Its stabilizer in $G(F_\infty)$ is the  maximal torus
\begin{equation}
\label{e:torus-star}        T_\star(F_\infty)
        :=
        \operatorname{Stab}_{G(F_\infty)}(z_\star)
        \simeq
        F_{\infty,\star}^{\times}/F_\infty^\times .
\end{equation}
In particular $T_\star(F_\infty)$ is compact.

Fix an open compact subgroup
$K_f\subset G(\mathbb A_{F,f})$
and set
\[
        U_\star:=T_\star(F_\infty)K_f
        \subset G(\mathbb A_F).
\]

Define
\begin{equation}\label{e:X_U}
        X_{U_\star}:=
        G(F)\backslash G(\mathbb A_F)/U_\star .
\end{equation}
This quotient has finite volume.  It is compact when $D$ is a division algebra, and
has finite volume but is non-compact when $D=M_2(F)$.

\begin{remark}
\label{r:XU-not-Bun2}
{\em
The quotient \eqref{e:X_U} should not be confused with the moduli stack $\operatorname{Bun}_2$, nor with
$\operatorname{Bun}_2$ equipped with finite level structure.
In the split case $D=M_2(F)$, the adelic description of $\operatorname{Bun}_2$ (as, for instance, in \cite[Appendix A.1]{shende-tsimerman})
involve quotienting by integral compact open
subgroups at every place; for $G=\PGL_2$, the local factor at $\infty$ would be
$\PGL_2(\mathcal O_\infty)$, as in the standard adelic description of
$\operatorname{Bun}_G$ (see, for instance, \cite[Appendix~A.1]{shende-tsimerman}).
In \eqref{e:X_U}, instead, the subgroup at $\infty$ is the compact (but not open in $G(F_\infty)$) torus
\eqref{e:torus-star}.
Thus $X_{U_\star}$ remembers the chosen CM local type at $\infty$ and is the
locally compact adelic quotient naturally attached to the Drinfeld upper-half-plane
uniformization. Indeed,  changing the finite level $K_f$ does not alter this
infinite place factor.
\\
We conclude this remark with the moduli-theoretic distinction: $\operatorname{Bun}_2$ parametrizes
vector bundles, whereas Drinfeld--Stuhler modular curves parametrize $D$-elliptic
sheaves, i.e., shtukas with a zero and a pole (which are the two {\em legs} in 
\cite[Example~2.3.3]{yun}). Level structure rigidifies the moduli problem but does not
remove these legs; it therefore leads to the Drinfeld--Stuhler curve, not to
$\operatorname{Bun}_2$.
}
\end{remark}

Let $dg$ be a Haar measure on $G(\mathbb A_F)$, and let
$\bar\mu_{U_\star}$ be the push-forward of the quotient measure on $G(F)\backslash G(\mathbb A_F)$
to $X_{U_\star}$.  We normalize it by
\[
        \mu_{U_\star}
        :=
        \frac{\bar\mu_{U_\star}}
             {\bar\mu_{U_\star}(X_{U_\star})}.
\]
Thus $\mu_{U_\star}$ is a probability measure, independent of the scalar
normalization of $dg$.

Assume now that $K_f$ is the image in $G(\mathbb A_{F,f})$ of the finite level
subgroup
\[
        K_I\subset D^\times(\mathbb A_{F,f})
\]
appearing in the rigid analytic uniformization \eqref{e:analytic-unif}.  Let
$\widetilde T_\star(F_\infty)\subset D_\infty^\times$
be the inverse image of $T_\star(F_\infty)$, and put
\[
        \widetilde X_{I,\star}
        :=
        D^\times(F)\backslash
        D^\times(\mathbb A_F)/
        \bigl(\widetilde T_\star(F_\infty)K_I\bigr).
\]
Then $z_\star$ gives the  map to the full level-$I$ analytic curve:
\[
        \widetilde\iota_{z_\star}\colon 
        \widetilde X_{I,\star}
        \rightarrow
        M_{\mathcal D,I}^{\operatorname{an}}(\mathbb{C}_\infty),
        \qquad
        D^\times(F)g(\widetilde T_\star K_I)
        \mapsto
        [g_\infty z_\star,g_f]_{K_I}.
\]

On the other hand, since $K_f$ is the image of $K_I$, its inverse image in
$D^\times(\mathbb A_{F,f})$ is
$\mathbb A_{F,f}^{\times}K_I$.
Hence
\[
        X_{U_\star}
        \simeq
        D^\times(F)\backslash
        D^\times(\mathbb A_F)/
        \bigl(\widetilde T_\star(F_\infty)\mathbb A_{F,f}^{\times}K_I\bigr).
\]
Thus $X_{U_\star}$ is the projectivized quotient of
$\widetilde X_{I,\star}$.

Let
\[
        \Delta_I
        :=
        F^\times\backslash\mathbb A_F^\times/
        \bigl(F_\infty^\times(K_I\cap\mathbb A_{F,f}^{\times})\bigr).
\]
It acts on $M_{\mathcal D,I}^{\operatorname{an}}(\mathbb{C}_\infty)$ by finite scalar
ideles $[z,g_f]_{K_I}\mapsto [z,a_fg_f]_{K_I}$.
The quotient $X_{U_\star}$ therefore gives a canonical map only after passing to
this central quotient:
\[
        \iota_{z_\star}^{\operatorname{proj}}\colon
        X_{U_\star}
        \rightarrow
        M_{\mathcal D,I}^{\operatorname{an}}(\mathbb{C}_\infty)/\Delta_I .
\]

The equidistribution theorem will be stated on $X_{U_\star}$ with respect to
$\mu_{U_\star}$.  Its geometric push-forward is canonically a statement on
$M_{\mathcal D,I}^{\operatorname{an}}(\mathbb{C}_\infty)/\Delta_I$.
For a statement on the full level-$I$ analytic curve itself, one works instead
with $\widetilde X_{I,\star}$ and $\widetilde\iota_{z_\star}$.

When $\star=2$, one shall consider
\[
        z_0:=z_2,\qquad
        T_0:=T_2,\qquad
        U:=U_2,\qquad
        X_U:=X_{U_2},\qquad
        \mu_U:=\mu_{U_2}.
\]
When $\star=\mathrm{ram}$, one shall instead use
\[
        z_{\mathrm{ram}},\qquad
        T_{\mathrm{ram}},\qquad
        U_{\mathrm{ram}},\qquad
        X_{U_{\mathrm{ram}}},\qquad
        \mu_{U_{\mathrm{ram}}}.
\]

\subsection{Adelic packets}

\subsubsection{Adelic CM packets}
\label{subsubsec:CM-packets}

Let $E/F$ be a separable quadratic extension satisfying
\[
        E_\infty:=E\otimes_FF_\infty\simeq F_{\infty,\star},
        \qquad
        \star\in\{2,\mathrm{ram}\}.
\]
Thus $E$ is nonsplit at $\infty$, and its local CM type is the fixed one chosen
in \S\ref{subsubsec:canonical-measure-CM-packets}.  We also impose the usual
embedding condition at $\operatorname{Ram}(D)$, that is, the set the finite ramified places of $D$:
\[
        E_v\ \text{is a field for every finite }v\in\operatorname{Ram}(D).
\]
Equivalently, $E$ embeds into $D$.  Fix the embedding $\iota\colon E\hookrightarrow D$, which induces an $F$-torus
\[
        T_E:=\operatorname{Res}_{E/F}\mathbb G_m/\mathbb G_m
        \subset
        G.
\]

Since $E_\infty\simeq F_{\infty,\star}$,
the compact torus
$T_E(F_\infty)=E_\infty^\times/F_\infty^\times$
is conjugate in $G(F_\infty)$ to the fixed torus
\[
        T_\star(F_\infty)
        =
        \operatorname{Stab}_{G(F_\infty)}(z_\star).
\]
Fix $z_\infty\in G(F_\infty)$
such that
$z_\infty^{-1}T_E(F_\infty)z_\infty=T_\star(F_\infty)$
and set
\[
        z:=(z_\infty,1_f)\in G(\mathbb A_F).
\]

Let $R\subset E$ be an $A$-order.  We say that $R$ is {\em realizable}, for the fixed
finite level datum, if there exists
$g_{R,f}\in G(\mathbb A_{F,f})$
such that, for any lift
$\widetilde g_{R,f}\in D^\times(\mathbb A_{F,f})$ of $g_{R,f}$, one has
\begin{equation}\label{e:realizable}
        \widehat R
        =\mathbb A_{E,f}\cap
        \widetilde g_{R,f}\widehat{\mathcal O}_D
        \widetilde g_{R,f}^{-1},
\end{equation}
inside $D(\mathbb A_{F,f})$, where $E$ is embedded in $D$ through the fixed
embedding $\iota:E\hookrightarrow D$.  
The condition is independent of the lift, since finite scalar adeles are central.

For such a choice, define the finite toric level subgroup
\[
        K_{T,R}
        :=
        T_E(\mathbb A_{F,f})
        \cap
        g_{R,f}K_fg_{R,f}^{-1}.
\]
If $K_f$ is contained in the image of
$\widehat{\mathcal O}_D^{\,\times}
        \subset D^\times(\mathbb A_{F,f})$
then $K_{T,R}
        \subseteq
        \widehat R^{\,\times}/\widehat A^{\,\times}$.
For maximal finite level equality holds.  At deeper level, $K_{T,R}$ is the
corresponding congruence subgroup of
$\widehat R^{\,\times}/\widehat A^{\,\times}$.

The projective (finite) CM parameter set is
\[
        \calp_{E,R}
        :=
        T_E(F)\backslash T_E(\mathbb A_{F,f})/K_{T,R}.
\]

Note that, since
$T_E(\mathbb A_{F,f})
\simeq\mathbb A_{E,f}^{\times}/\mathbb A_{F,f}^{\times}$,
the set $\calp_{E,R}$ is a finite level refinement of the projectivized ring class
group of $R$.  If $K_f$ is maximal at the finite places, so that $K_{T,R}=\widehat R^{\,\times}/\widehat A^{\,\times}$, then
\[
        \calp_{E,R}
        \simeq
        E^\times\backslash
        \mathbb A_{E,f}^{\times}
        /
        \bigl(\mathbb A_{F,f}^{\times}\widehat R^{\,\times}\bigr).
\]
For deeper level, $K_{T,R}$ is a congruence subgroup of
$\widehat R^{\,\times}/\widehat A^{\,\times}$, and $\calp_{E,R}$ is the
corresponding finite cover of the projectivized ring class group.
Define
\[
        g_{E,R}:=z(1,g_{R,f})\in G(\mathbb A_F).
\]
The CM packet is the image of
\[
x_{E,R,\star}\colon \calp_{E,R}\rightarrow X_{U_\star}
\qquad
        [t_f]
        \mapsto
        G(F)\,z(1,t_f)(1,g_{R,f})\,U_\star .
\]
This map is well defined.  Right multiplication by $K_{T,R}$ is absorbed by
$K_f$, while multiplication by $T_E(F)$ is absorbed on the left; at the 
place $\infty$ this uses
$z_\infty^{-1}T_E(F_\infty)z_\infty=T_\star(F_\infty)$.
\begin{remark}\label{r:cm-local}{\em
The points of $X_{U_\star}$ obtained in this way should be understood as
projectivized adelic avatars of CM points with fixed local type
$E_\infty\simeq F_{\infty,\star}$.  The choice of $z_\infty$ identifies the
$T_E(F_\infty)$-fixed point
\[
        z_E:=z_\infty\cdot z_\star\in\Omega(\mathbb C_\infty),
\]
where $z_\infty$ acts on $\Omega$ through $G(F_\infty)$.  This is the same
torus-fixed point $z_E$ appearing in the rigid-analytic description of CM points.
The finite adelic coordinate of $x_{E,R,\star}([t_f])$ is represented by
$t_f g_{R,f}\in G(\mathbb A_{F,f})$.
Thus $x_{E,R,\star}([t_f])$ records the corresponding CM point after projectivizing
by finite scalar ideles.

When $K_f$ is induced by a level subgroup
$K_I\subset D^\times(\mathbb A_{F,f})$, the precise relation with genuine analytic
CM points on the level-$I$ Drinfeld--Stuhler curve is described below using the
lifted parameter set $\widetilde{\calp}_{E,R,I}$ and the map
$\Theta_{E,R,I,\star}$.  
}
\end{remark}

We thus define the {\em CM packet} on the measurable quotient by
\[
        \operatorname{CM}_{E,R,\star}
        :=
        x_{E,R,\star}(\calp_{E,R})
        \subset X_{U_\star}\ .
\]
We attach to it the push-forward of the uniform probability measure on
$\calp_{E,R}$
\[
        \mu_{E,R,\star}
        :=
        (x_{E,R,\star})_*
        \left(
        \frac{1}{\#\calp_{E,R}}
        \sum_{[t_f]\in\calp_{E,R}}\delta_{[t_f]}
        \right).
\]
Thus, for any test function $\varphi$ on $X_{U_\star}$,
\[
        \int_{X_{U_\star}}\varphi\,d\mu_{E,R,\star}
        =
        \frac{1}{\#\calp_{E,R}}
        \sum_{[t_f]\in\calp_{E,R}}
        \varphi\!\left(x_{E,R,\star}(t_f)\right).
\]

The complexity of the admissible datum is measured as follows.  Denote by $\mathfrak f_R$
the conductor of $R$, and define the discriminant of the order $R$ by
\[
        \mathfrak D_{E,R}
        :=
        \mathfrak D_{E/F}\ \mathfrak f_R^2,
        \qquad
        D(E,R):=q^{\deg\mathfrak D_{E,R}}.
\]
This is the complexity parameter in  our equidistribution theorems.

A pair $(E,R)$ satisfying the conditions above, including the fixed local type
\[
        E_\infty\simeq F_{\infty,\star}
\]
and realizability at the fixed finite level $K_f$, will be called an {\em admissible CM datum}.  The auxiliary choices
\[
        \iota,\quad z_\infty,\quad g_{R,f},\quad \widetilde g_{R,f}
\]
shall be suppressed from the notation once fixed.

\subsubsection{Rigid analytic realization of CM packets}
\label{subsubsec:rigid analytic-CM-packets}

Suppose that the
finite level $K_f$ is the image in $G(\mathbb A_{F,f})$ of the subgroup
$K_I\subset D^\times(\mathbb A_{F,f})$
appearing in the rigid analytic uniformization \eqref{e:analytic-unif}.
Using the chosen lift $\widetilde g_{R,f}$, set
\[
        K_{E,R,I}
        :=
        \mathbb A_{E,f}^{\times}
        \cap
        \iota_f^{-1}
        \left(
        \widetilde g_{R,f}K_I\widetilde g_{R,f}^{-1}
        \right).
\]
Define the idelic CM parameter set
\[
        \widetilde{\calp}_{E,R,I}
        :=
        E^\times\backslash
        \mathbb A_{E,f}^{\times}/K_{E,R,I}.
\]
The CM map is
\[
        \Theta_{E,R,I,\star}:
        \widetilde{\calp}_{E,R,I}
        \rightarrow
        M_{\mathcal D,I}^{\operatorname{an}}(\mathbb{C}_\infty)
\qquad
        [t_f]
        \mapsto
        [\,z_E,\iota_f(t_f)\widetilde g_{R,f}\,]_{K_I}
        =
        [\,z_\infty z_\star,\iota_f(t_f)\widetilde g_{R,f}\,]_{K_I}.
\]

Indeed, also this map is well defined.  If
$u\in K_{E,R,I}$, then
$\iota_f(u)\widetilde g_{R,f}
\in\widetilde g_{R,f}K_I$,
so right multiplication by $u$ does not change the point of the level-$I$
quotient.  If $a\in E^\times$, then
$[\,z_E,\iota_f(at_f)\widetilde g_{R,f}\,]_{K_I}=[\,\iota_\infty(a)z_E,\iota_f(a)\iota_f(t_f)\widetilde g_{R,f}\,]_{K_I}=[\,z_E,\iota_f(t_f)\widetilde g_{R,f}\,]_{K_I}$,
because $E_\infty^\times$ fixes $z_E$, and the analytic uniformization is
quotiented on the left by $D^\times$.

Fix a lift $\widetilde z_\infty\in D^\times(F_\infty)$
of $z_\infty\in G(F_\infty)$.  Consider the lifted packet map
\[
        \widetilde x_{E,R,I,\star}\colon 
        \widetilde{\calp}_{E,R,I}
        \rightarrow
        \widetilde X_{I,\star}
\qquad
[t_f]\mapsto
        D^\times(F)
        \bigl(\widetilde z_\infty,\iota_f(t_f)\widetilde g_{R,f}\bigr)
        \bigl(\widetilde T_\star(F_\infty)K_I\bigr).
\]
Then the  compatibility with the full analytic curve is
\[
        \widetilde\iota_{z_\star}
        \bigl(\widetilde x_{E,R,I,\star}([t_f])\bigr)
        =
        \Theta_{E,R,I,\star}([t_f]).
\]

After projectivizing, let
\begin{equation}\label{e:quotient-map}
        q\colon
        M_{\mathcal D,I}^{\operatorname{an}}(\mathbb{C}_\infty)
        \rightarrow
        M_{\mathcal D,I}^{\operatorname{an}}(\mathbb{C}_\infty)/\Delta_I
\end{equation}
be the quotient map.  Then
$\iota_{z_\star}^{\operatorname{proj}}
        \bigl(x_{E,R,\star}(\mathrm{pr}([t_f]))\bigr)
        =
        q\bigl(\Theta_{E,R,I,\star}([t_f])\bigr)$.
Set
\[
        \operatorname{CM}_{E,R,I,\star}^{\operatorname{an}}
        :=
        \Theta_{E,R,I,\star}
        \bigl(\widetilde{\calp}_{E,R,I}\bigr)
        \subset
        M_{\mathcal D,I}^{\operatorname{an}}(\mathbb{C}_\infty).
\]
The following diagram summarizes the relation between the adelic CM packet and its
geometric realization in the rigid analytic Drinfeld--Stuhler curve:
\begin{equation}\label{diagram}
\begin{tikzcd}[column sep=large, row sep=large]
        \widetilde X_{I,\star}
        \arrow[r, "\widetilde\iota_{z_\star}"]
&
        M_{\mathcal D,I}^{\operatorname{an}}(\mathbb{C}_\infty)
\\
        \widetilde{\calp}_{E,R,I}
        \arrow[u, "\widetilde x_{E,R,I,\star}"]
        \arrow[r, "\Theta_{E,R,I,\star}"']
&
        \operatorname{CM}_{E,R,I,\star}^{\operatorname{an}}
        \arrow[u, hook]\;\;\;.
\end{tikzcd}
\end{equation}

\section{Equidistribution \`a la Duke}\label{sec:3}

\subsection{Weyl's sums and Waldspurger formula}
\label{subsec:common-analytic-input}

We collect the analytic ingredients used in both the compact and the split
non-compact cases.  
\\

Throughout this section, $D$ is either a quaternion division algebra over $F$,
split at $\infty$, or $D\simeq M_2(F)$.  Put
\[
        G:=D^\times/F^\times,
        \qquad
        [G]:=G(F)\backslash G(\mathbb A_F).
\]

Fix once and for all a local CM type
\[
        \star\in\{2,\mathrm{ram}\}.
\]
For the rest of Section~\ref{sec:3}, we use the shorthand
\[
        U:=U_\star=T_\star(F_\infty)K_f,
        \qquad
        X_U:=X_{U_\star}=[G]/U,
        \qquad
        \mu_U:=\mu_{U_\star}.
\]
Thus all CM data considered in this section satisfy
\[
        E_\infty\simeq F_{\infty,\star}.
\]
We also suppress $\star$ from the packet notation, writing
\[
        \mu_{E,R}:=\mu_{E,R,\star},
        \qquad
        x_{E,R}:=x_{E,R,\star}.
\]

Throughout Section~\ref{sec:3}, we assume that $K_f$ is the image in
$G(\mathbb A_{F,f})$ of an open compact subgroup
\[
        \mathcal K_f\subset \widehat{\mathcal O}_D^{\,\times}.
\]
Implicit constants attached to the fixed level may depend on $F$, $D$, $U$, and on
the chosen lift $\mathcal K_f$, but are uniform in the varying admissible CM datum
$(E,R)$ and in packet characters, unless further dependence is explicitly indicated.

\subsubsection{Weyl sums}
\label{subsubsec:weyl-sums}

Let $\varphi$ be a test function on $X_U$.  The Weyl sum of $\varphi$ over the
CM packet attached to $(E,R)$ is
\[
        W(\varphi;E,R)
        :=
        \int_{X_U}\varphi\,d\mu_{E,R}.
\]
Equivalently,
\[
        W(\varphi;E,R)
        =
        \frac{1}{\#\calp_{E,R}}
        \sum_{[t_f]\in\calp_{E,R}}
        \varphi\!\left(x_{E,R}(t_f)\right),
\]
where
\[
        \calp_{E,R}
        =
        T_E(F)\backslash T_E(\mathbb A_{F,f})/K_{T,R}.
\]

More generally, for a character
$\chi\colon\calp_{E,R}\rightarrow \mathbb C^\times$,
we consider the twisted Weyl sum
\[
        W(\varphi;E,R,\chi)
        :=
        \frac{1}{\#\calp_{E,R}}
        \sum_{[t_f]\in\calp_{E,R}}
        \varphi\!\left(x_{E,R}(t_f)\right)\chi([t_f])
\]
which indeed recovers the usual Weyl sum for $\chi=1$.

\subsubsection{Waldspurger formula}
\label{subsubsec:waldspurger}

Let $E/F$ be a separable quadratic extension embedded
in $D$, and let
\[
        \eta_E\colon F^\times\backslash\mathbb A_F^\times\rightarrow \{\pm1\}
\]
be the associated quadratic character.

Let $\Pi$ be a cuspidal automorphic representation of $\GL_2(\mathbb A_F)$ with
unitary central character.  Assume that $\Pi$ transfers to $D^\times(\mathbb A_F)$;
when $D=M_2(F)$, this means simply that we take $D^\times=\GL_2$.  We write
\[
        \pi=\Pi^D
\]
for the corresponding automorphic representation of $D^\times(\mathbb A_F)$.

Let $\chi:E^\times\backslash\mathbb A_E^\times\rightarrow \mathbb C^\times$
be a unitary Hecke character satisfying $\omega_\Pi\cdot \chi|_{\mathbb A_F^\times}=1$.
For $f\in\pi$, define the toric period
\[
        P_\chi^D(f)
        :=
        \int_{E^\times\mathbb A_F^\times\backslash\mathbb A_E^\times}
        f(\iota(t))\chi(t)\,d^\times t .
\]
We use the notation and normalizations as in \cite{qiu} for all $L$-functions.  Thus $1_F$ denotes
the trivial character of $F^\times\backslash\mathbb A_F^\times$, and $L(s,1_F)$ is
its global $L$-function.  We write $L(s,\Pi,\operatorname{Ad})$ for the adjoint
$L$-function of $\Pi$.  Finally, $\Pi_E$ denotes the quadratic base change of $\Pi$
to $\GL_2(\mathbb A_E)$, and $L(s,\Pi_E\otimes\chi)$ is the standard $L$-function of
$\Pi_E$ twisted by the Hecke character $\chi$.
Recall also that
\[
        \pi=\bigotimes_v'\pi_v
\]
is the restricted tensor product of its local components.  For pure tensors
\[
        f=\otimes_v' f_v,
        \qquad
        \widetilde f=\otimes_v'\widetilde f_v,
        \qquad
        \text{with}
        \quad
        f_v,\widetilde f_v\in\pi_v,
\]
Qiu's function field Waldspurger formula
\cite[Theorem~1.2.3]{qiu} gives

\begin{equation}
\label{eq:waldspurger-qiu}
        P_\chi^D(f)\,P_{\chi^{-1}}^D(\widetilde f)
        =
        \frac{
        L(2,1_F)\,
        L\!\left(\frac12,\Pi_E\otimes\chi\right)
        }{
        2\,L(1,\Pi,\operatorname{Ad})
        }
        \prod_v
        \alpha_v^\natural(f_v,\widetilde f_v;\chi_v).
\end{equation}
Here $\Pi_E$ denotes quadratic base change to $E$.  The local factor
$\alpha_v^\natural$ is Qiu's normalized local period
$\alpha^\sharp_{\pi_v}$ from \cite[\S1.2.3, local periods and (1.3)]{qiu}.  Namely,
if $(\cdot,\cdot)_v$ is the local pairing on $\pi_v\otimes\widetilde\pi_v$, then
\[
        \alpha_{\pi_v}(f_v,\widetilde f_v;\chi_v)
        :=
        \int_{E_v^\times/F_v^\times}
        (\pi_v(t)f_v,\widetilde f_v)_v\,\chi_v(t)\,dt,
\]
and
\[
        \alpha_v^\natural(f_v,\widetilde f_v;\chi_v)
        :=
        \frac{
        L(1,\eta_{E,v})L(1,\pi_v,\operatorname{Ad})
        }{
        L(2,1_{F_v})L(1/2,\Pi_{E,v}\otimes\chi_v)
        }\,
        \alpha_{\pi_v}(f_v,\widetilde f_v;\chi_v).
\]
With the standard unramified normalizations, this factor is equal to $1$ for all but
finitely many $v$.

In the unitary case, taking $\widetilde f=\overline f$ gives
\begin{equation}
\label{eq:waldspurger-square}
        |P_\chi^D(f)|^2
        =
        \frac{
        L(2,1_F)\,
        L\!\left(\frac12,\Pi_E\otimes\chi\right)
        }{
        2\,L(1,\Pi,\operatorname{Ad})
        }
        \prod_v
        \alpha_v^\natural(f_v,\overline f_v;\chi_v).
\end{equation}

For automorphic forms descending to $G=D^\times/F^\times$, the central character is
trivial.  Packet characters, being characters of the projective torus $T_E$, are
trivial on $\mathbb A_F^\times$.  Thus the compatibility condition
$\omega_\Pi\chi|_{\mathbb A_F^\times}=1$ is automatic in our application.

\subsubsection{Packet sums as toric periods}
\label{subsubsec:packet-sums-as-toric-periods}

We now relate the finite Weyl sums to the periods in
\eqref{eq:waldspurger-qiu}.  Keep the notation of
\S\ref{subsubsec:canonical-measure-CM-packets}.  Thus
\[
        z=(z_\infty,1_f),
        \qquad
        z_\infty^{-1}T_E(F_\infty)z_\infty=T_\star(F_\infty),
\qquad x_{E,R}(t_f)
        =
        G(F)\,z(1,t_f)(1,g_{R,f})\,U .
\]
Set $g_{E,R}:=z(1,g_{R,f})\in G(\mathbb A_F)$.
Let $f$ be a right $U$-invariant automorphic function on $[G]$, and let
$\varphi$ be the corresponding function on $X_U$.  Define the right translate
\[
        f_{E,R}(g):=f(g\,g_{E,R}).
\]
For $t=(t_\infty,t_f)\in T_E(\mathbb A_F)$, the choice of $z_\infty$ and the
right $T_\star(F_\infty)$-invariance of $f$ give
\[
        f_{E,R}(t)
        =
        f(t\,g_{E,R})
        =
        \varphi\!\left(x_{E,R}(t_f)\right).
\]
Consider the natural projection $\operatorname{pr}_f:\mathbb A_{E,f}^{\times}
        \rightarrow
        T_E(\mathbb A_{F,f})
        =
        \mathbb A_{E,f}^{\times}/\mathbb A_{F,f}^{\times}$
and set $\widetilde K_{T,R}:=
\operatorname{pr}_f^{-1}(K_{T,R})\subset\mathbb A_{E,f}^{\times}$.
Inflate a character
\[
        \chi:\calp_{E,R}\to\mathbb C^\times
\]
to a Hecke character of $E^\times\backslash\mathbb A_E^\times$, trivial on
$\mathbb A_F^\times$, on $E_\infty^\times$, and on
$\widetilde K_{T,R}$.  In other words, we regard $\chi$ as a character of
$T_E(F)\backslash T_E(\mathbb A_F)$
which is trivial on $T_E(F_\infty)$ and on $K_{T,R}$.

Let
\[
        K_T
        :=
        T_E(\mathbb A_F)\cap g_{E,R}Ug_{E,R}^{-1}
\]
so that  $K_T=T_E(F_\infty)K_{T,R}$.
Consider
\[
        w_{E,R}:=\#(T_E(F)\cap K_T).
\]
\begin{lemma}
With the above notation, one has $w_{E,R}\le q+1$.
\end{lemma}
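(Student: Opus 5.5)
The plan is to show that $T_E(F)\cap K_T$ is a finite group that embeds into $\bigl(\mathbb F_{q'}^\times\bigr)/\mathbb F_q^\times$, where $\mathbb F_{q'}$ is the constant field of $E$. Since $\mathbb F_{q'}$ is either $\mathbb F_q$ or $\mathbb F_{q^2}$, the quotient $\mathbb F_{q'}^\times/\mathbb F_q^\times$ has order $1$ or $q+1$, which gives the bound.

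\textbf{Step 1: pass to an integral unit.} An element of $T_E(F)\cap K_T$ is represented by some $a\in E^\times$, well defined modulo $F^\times$. At every finite place its class lies in $K_{T,R}$. By the standing assumption $K_f\subset\operatorname{im}(\widehat{\mathcal O}_D^{\,\times})$ and the inclusion $K_{T,R}\subseteq \widehat R^{\,\times}/\widehat A^{\,\times}$ recorded above, we get $a\in \widehat R^{\,\times}\mathbb A_{F,f}^\times$. Write $a=u\lambda$ with $u\in\widehat R^{\,\times}$ and $\lambda\in\mathbb A_{F,f}^\times$. Then $\mathrm{div}(a)$ on $\operatorname{Spec} R$, away from $\infty$, is the pullback of the $F$-divisor $\mathrm{div}(\lambda)$.

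\textbf{Step 2: use the compactness at $\infty$.} Since $E_\infty$ is a field, $T_E(F_\infty)$ is compact, so the condition at $\infty$ is automatic. The real constraint is global. The group $T_E(F)\cap K_T$ is discrete in the compact group $K_T$, hence finite; so each element $\bar a$ has finite order $n$. Then $a^n=c\in F^\times$. Taking norms gives $N(a)^n=c^2$, and the divisor of $a$ is determined by the $F$-divisor $\mathrm{div}(\lambda)$ away from $\infty$.

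\textbf{Step 3: normalize to a constant.} I would adjust $a$ by an element of $F^\times$ so that it becomes a unit at every finite place. The idea is that $\mathrm{div}(\lambda)$ is an $F$-divisor whose class in the Picard group is killed, since $a$ has principal divisor. This is the main obstacle: $\mathrm{Pic}(A)$ may have torsion, so the adjustment is not automatic. The argument I would try is the following. Because $E_\infty$ is a field, $\infty$ has a single place above it in $E$. Then $\mathrm{div}_E(a)=\pi^*\mathrm{div}_F(\lambda)+m\,[\infty_E]$, with $m$ forced by the degree-zero condition. Taking the norm gives $\mathrm{div}_F(N a)=2\,\mathrm{div}_F(\lambda)+(\text{multiple of }\infty)$.

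Now replace $a$ by $a'=a^2/N(a)$. This has the same image in $T_E(F)$ up to squaring, so it only controls $\bar a^2$. The parity issue here is exactly where care is needed. Instead, I would argue directly that the image of $\bar a$ in $E^\times/F^\times$ is torsion. Its valuations at all places of $E$ then come from $F$-valuations. Hence $a^{1-\sigma}$, with $\sigma$ the Galois conjugation, has trivial divisor, so it is a constant in $\mathbb F_{q'}^\times$ of norm $1$. The map $\bar a\mapsto a^{1-\sigma}$ is injective on $E^\times/F^\times$ by Hilbert 90, since its kernel is $F^\times$.

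\textbf{Step 4: count.} Step 3 gives an injection of $T_E(F)\cap K_T$ into the norm-one constants $\{c\in\mathbb F_{q'}^\times : c^{1+\sigma}=1\}$. This group is trivial if $q'=q$ and $E/F$ is geometric, since then $c^2=1$ with $c=c^\sigma$; the only subtlety is $c=-1$, which is still harmless. It has order $q+1$ if $\mathbb F_{q'}=\mathbb F_{q^2}$, the constant-field-extension case, which includes $\star=2$ when $\deg\infty$ is odd. In every case the order is at most $q+1$, which is the bound on $w_{E,R}$.
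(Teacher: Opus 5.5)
Your argument is correct and is essentially the paper's proof. In both, the divisor of $a$ is Galois-invariant: you get this from Step 1 or from torsion, the paper from compactness at every place. Hence $a^{1-\sigma}$ is a norm-one constant in $k_E^\times$, and the map $\bar a\mapsto a^{1-\sigma}$ is injective because its kernel is $F^\times$. This gives $w_{E,R}\le\#\{c\in k_E^\times : c\,\sigma(c)=1\}\le q+1$.

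Two small slips do not affect the bound. In the geometric case the target group is $\{\pm1\}$, not the trivial group, so your opening claim of an embedding into $\mathbb F_{q'}^\times/\mathbb F_q^\times$ is slightly off. Also, injectivity needs no Hilbert~90.
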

\begin{proof}
Let $[\alpha]\in T_E(F)\cap K_T$, and choose a representative
$\alpha\in E^\times$.  Since $[\alpha]$ lies in a compact subgroup at every
place, the divisor of $\alpha$ on $E$ is Galois-invariant.  Hence $\sigma(\alpha)/\alpha\in k_E^\times$,
where $k_E$ is the constant field of $E$.  The map
$[\alpha]\mapsto \sigma(\alpha)/\alpha$
is injective, because $\sigma(\alpha)=\alpha$ implies $\alpha\in F^\times$.
Its image lies in
\[
        \{c\in k_E^\times:c\sigma(c)=1\},
\]
whose order is at most $q+1$.  Therefore $w_{E,R}\le q+1$.
\end{proof}

Since $K_T=T_E(F_\infty)K_{T,R}$,
the projection to the finite adeles gives a natural identification
\[
        T_E(F)\backslash T_E(\mathbb A_F)/K_T
        \simeq
        T_E(F)\backslash T_E(\mathbb A_{F,f})/K_{T,R}
        =
        \calp_{E,R}.
\]
The measure of each right $K_T$-orbit in
$T_E(F)\backslash T_E(\mathbb A_F)$ is
\[
        \frac{\vol(K_T)}{w_{E,R}}
\]
since the stabilizer of such an orbit is $T_E(F)\cap K_T$.
The toric period of $f_{E,R}$ is therefore
\[
        P_\chi^D(f_{E,R})
        =
        \int_{T_E(F)\backslash T_E(\mathbb A_F)}
        f_{E,R}(t)\chi(t)\,d^\times t,
\]
and the integrand is right $K_T$-invariant.  Hence

\begin{equation}
\label{eq:compare-weyl-period}
        P_\chi^D(f_{E,R})
        =\frac{\vol(K_T)}{w_{E,R}}
        \sum_{[t_f]\in\calp_{E,R}}
        \varphi\!\left(x_{E,R}(t_f)\right)\chi([t_f])=
        \frac{\vol(K_T)}{w_{E,R}}\,
        \#\calp_{E,R}\,
        W(\varphi;E,R,\chi).
\end{equation}

Combining \eqref{eq:compare-weyl-period} with
\eqref{eq:waldspurger-square} gives
\begin{equation}
\label{eq:weyl-waldspurger}
                |W(\varphi;E,R,\chi)|^2
        =
        \frac{w_{E,R}^{\,2}}
        {\vol(K_T)^2\,\#\calp_{E,R}^{\,2}}
        \cdot
        \frac{
        L(2,1_F)\,
        L\!\left(\frac12,\Pi_E\otimes\chi\right)
        }{
        2\,L(1,\Pi,\operatorname{Ad})
        }
        \prod_v
        \alpha_v^\natural
        \bigl(f_{E,R,v},\overline f_{E,R,v};\chi_v\bigr).
\end{equation}

For our equidistribution problem we shall thus deal with two estimates: a bound for the central value and a
local normalization estimate for the remaining product of local factors.

\subsubsection{Local packet normalization}
\label{subsubsec:local-normalization}

We now state the local input needed for the Waldspurger part: this is the
cuspidal analogue of the local harmonic analysis of
\cite[\S9]{ELMV3}.  It is a fixed-level estimate for toric matrix-coefficient
integrals. 

For a finite place $v$, put
\[
        a_v(E):=\operatorname{ord}_v(\mathfrak D_{E/F}),
        \qquad
        b_v(E,R):=\operatorname{ord}_v(\mathfrak D_{E,R}/\mathfrak D_{E/F}),
\]
so that
\[
        n_v(E,R)
        :=
        \operatorname{ord}_v(\mathfrak D_{E,R})
        =
        a_v(E)+b_v(E,R).
\]

Define
\[
        D_E:=q^{\deg\mathfrak D_{E/F}},
        \qquad
        \Delta_R:=\frac{D(E,R)}{D_E}
        =
        \prod_{v\neq \infty}q_v^{b_v(E,R)}.
\]

All discriminants here are discriminants of trace-dual lattices.  In particular,
the notation is valid without any tameness assumption and includes characteristic
$2$, provided $E/F$ is separable.

All estimates in this subsection are finite-place estimates.  Write
\[
        K_{T,R}=\prod_{v\neq\infty}K_{T,R,v}.
\]
Fix a finite place $v$, consider $F_v$, and choose a lift
$\widetilde g_{R,v}\in D_v^\times$of $g_{R,v}\in G(F_v)$.  Let
\[
        \mathcal O'_{D,v}
        :=
        \widetilde g_{R,v}\mathcal O_{D,v}\widetilde g_{R,v}^{-1},
        \qquad
        R_v:=E_v\cap\mathcal O'_{D,v}.
\]
Choose an open compact lift
$\mathcal K_{f,v}\subset\mathcal O_{D,v}^{\times}$
whose image in $D_v^\times/F_v^\times$ is $K_{f,v}$, and put
\[
        \mathcal K'_{f,v}
        :=
        \widetilde g_{R,v}\mathcal K_{f,v}\widetilde g_{R,v}^{-1}.
\]
Then $K_{T,R,v}$ is the image in $E_v^\times/F_v^\times$ of
$\widetilde K_{T,R,v}
        :=
        E_v^\times\cap F_v^\times\mathcal K'_{f,v}.$

If $K_{f,v}$ is maximal, namely if
$\mathcal K_{f,v}=\mathcal O_{D,v}^{\times}$, then
$K_{T,R,v}
        =
        R_v^\times/\mathcal O_v^\times.$
For arbitrary fixed finite level one has instead
\begin{equation}\label{e:kt}
    K_{T,R,v}
        \subseteq
        R_v^\times/\mathcal O_v^\times
        \end{equation}
and
\[
        \bigl[
        R_v^\times/\mathcal O_v^\times:K_{T,R,v}
        \bigr]
        \ll_{K_{f,v}}1.
\tag{*'}
\]
Here $\operatorname{vol}$ denotes volume with respect to a fixed Haar measure on the
local torus
$T_E(F_v)$
Both $K_{T,R,v}$ and $R_v^\times/\mathcal O_v^\times$ are compact open subgroups of
this torus.  Since
$K_{T,R,v}\subset R_v^\times/\mathcal O_v^\times$
with index bounded in terms of the fixed local level $K_{f,v}$, Haar
translation-invariance gives
$\operatorname{vol}(R_v^\times/\mathcal O_v^\times)
        =
        [R_v^\times/\mathcal O_v^\times:K_{T,R,v}]\,
        \operatorname{vol}(K_{T,R,v})$,
i.e.,
\[
        \operatorname{vol}(K_{T,R,v})
        \asymp_{K_{f,v}}
        \operatorname{vol}(R_v^\times/\mathcal O_v^\times).
\tag{*''}
\]

\begin{lemma}
\label{lem:local-fixed-level-comparison}
With the notation above, assume that $K_{f,v}$ is the image of an open compact
subgroup $\mathcal K_{f,v}\subset\mathcal O_{D,v}^{\times}$.

There is a constant $C_v=C(K_{f,v})$ such that, uniformly in $E_v$, in
the embedding $E_v\hookrightarrow D_v$, and in the order $R_v$,
$\bigl[
R_v^\times/\mathcal O_v^\times:K_{T,R,v}
\bigr]\le C_v$.
Equivalently,
\[
        \operatorname{vol}(K_{T,R,v})
        \asymp_{K_{f,v}}
        \operatorname{vol}(R_v^\times/\mathcal O_v^\times).
\]
\end{lemma}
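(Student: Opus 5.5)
The plan is to bound the index by a group-theoretic comparison inside $D_v^\times/F_v^\times$, using only that $\mathcal K_{f,v}$ has finite index in $\mathcal O_{D,v}^\times$. Set
\[
        N_v:=[\mathcal O_{D,v}^\times:\mathcal K_{f,v}],
\]
which is finite because $\mathcal K_{f,v}$ is open in the compact group $\mathcal O_{D,v}^\times$. I will show that the index in the statement is at most $N_v$, so that one may take $C_v:=N_v$. This constant depends only on the chosen lift of $K_{f,v}$, and not on $E_v$, the embedding, or $R_v$.

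First, I conjugate by $\widetilde g_{R,v}$. Inner conjugation preserves indices, so
\[
        [\mathcal O'^{\times}_{D,v}:\mathcal K'_{f,v}]=N_v,
        \qquad
        \mathcal O'_{D,v}=\widetilde g_{R,v}\mathcal O_{D,v}\widetilde g_{R,v}^{-1}.
\]
Intersecting with the subgroup $R_v^\times=E_v^\times\cap\mathcal O'^{\times}_{D,v}$ can only decrease the index. For any group $H$ and subgroup $K$ of finite index, and any subgroup $S\subseteq H$, the map $S/(S\cap K)\hookrightarrow H/K$ is injective. Applying this gives
\[
        [R_v^\times:R_v^\times\cap\mathcal K'_{f,v}]\le N_v.
\]

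Second, I pass to the projective torus. The group $K_{T,R,v}$ is the image of $\widetilde K_{T,R,v}=E_v^\times\cap F_v^\times\mathcal K'_{f,v}$ in $E_v^\times/F_v^\times$, and this image contains the image of $R_v^\times\cap\mathcal K'_{f,v}$. The image of $R_v^\times$ is $R_v^\times/\mathcal O_v^\times$, since $R_v^\times\cap F_v^\times=\mathcal O_v^\times$. Images of subgroups under a homomorphism again satisfy
\[
        [\overline{H}:\overline{K}]\le[H:K],
\]
because the surjection $H/K\to\overline H/\overline K$ has image of size at least the right-hand index. Combined with the inclusion \eqref{e:kt}, this yields
\[
        [R_v^\times/\mathcal O_v^\times:K_{T,R,v}]\le[R_v^\times/\mathcal O_v^\times:\mathrm{im}(R_v^\times\cap\mathcal K'_{f,v})]\le N_v.
\]

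The only point needing care is the inclusion \eqref{e:kt}, that is, $K_{T,R,v}\subseteq R_v^\times/\mathcal O_v^\times$. An element of $E_v^\times\cap F_v^\times\mathcal K'_{f,v}$ has the form $\lambda k$ with $\lambda\in F_v^\times$ and $k\in\mathcal K'_{f,v}$. Then $k=\lambda^{-1}(\lambda k)$ lies in $E_v^\times\cap\mathcal O'^{\times}_{D,v}=R_v^\times$, so its class modulo $F_v^\times$ lies in $R_v^\times/\mathcal O_v^\times$. This is the main obstacle only in the bookkeeping sense. It is also where the hypothesis $\mathcal K_{f,v}\subset\mathcal O_{D,v}^\times$ is used, and nothing about ramification, the residue characteristic, or the discriminant enters.

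Finally, the volume statement follows from (*''), that is, from Haar translation-invariance on the local torus $T_E(F_v)$: the index lies between $1$ and $C_v$.
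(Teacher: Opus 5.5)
Your proof is correct, but it takes a different route from the paper's.

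\textbf{The paper's route.} The paper argues through congruence depth. It chooses $m_v$ with $1+\mathfrak p_v^{m_v}\mathcal O_{D,v}\subset\mathcal K_{f,v}$. Conjugating and intersecting with $E_v^\times$ gives $1+\mathfrak p_v^{m_v}R_v\subset\widetilde K_{T,R,v}$. It then uses the explicit shape $R_v=\mathcal O_v+\mathfrak p_v^{c_v}\mathcal O_{E_v}$ to bound the index by $[1+\mathfrak p_v^{c_v}\mathcal O_{E_v}:1+\mathfrak p_v^{c_v+m_v}\mathcal O_{E_v}]$, and treats $c_v=0$ separately.

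\textbf{Your route.} You use only that $N_v=[\mathcal O_{D,v}^\times:\mathcal K_{f,v}]$ is finite, together with the two elementary index inequalities: for intersections, $[S:S\cap K]\le[H:K]$, and for images under $E_v^\times\to E_v^\times/F_v^\times$. This needs nothing about the structure of quadratic orders or conductors and involves no case distinction. It also yields the explicit constant $C_v=N_v$. That constant depends only on $K_{f,v}$ if you use the canonical lift $\mathcal O_v^\times\mathcal K_{f,v}$, which is the full preimage of $K_{f,v}$ intersected with $\mathcal O_{D,v}^\times$ and leaves $\widetilde K_{T,R,v}$ unchanged.

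\textbf{What the paper's route buys.} It produces the intermediate containment $1+\mathfrak p_v^{c_v+m_v}\mathcal O_{E_v}\subset\widetilde K_{T,R,v}$. That containment, not the index bound, is what is actually invoked later in the proof of Proposition~\ref{prop:function field-lindelof-input} to get $a_{E_v}(\chi_v)\ll c_v+m_v$.

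An index bound alone would not give this in characteristic $p$. There $1+\mathfrak p_v\mathcal O_{E_v}$ is not topologically finitely generated, and it has open subgroups of index $p$ of arbitrarily large depth (kernels of order-$p$ characters of large conductor). So bounded index does not force bounded conductor.

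Your argument therefore fully proves the lemma as stated. If it were to replace the paper's proof, the depth statement would have to be added separately for the later conductor estimate.
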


\begin{proof}
Let
\[
        \mathcal O'_{D,v}
        =
        \widetilde g_{R,v}\mathcal O_{D,v}\widetilde g_{R,v}^{-1},
        \qquad
        R_v=E_v\cap\mathcal O'_{D,v}.
\]
The inverse image of $K_{T,R,v}$ in $E_v^\times$ is
$\widetilde K_{T,R,v}
        =
        E_v^\times\cap F_v^\times
        \widetilde g_{R,v}\mathcal K_{f,v}\widetilde g_{R,v}^{-1}$.
Since $\mathcal K_{f,v}\subset\mathcal O_{D,v}^{\times}$, we have
\[
        \widetilde K_{T,R,v}
        \subset
        E_v^\times\cap F_v^\times(\mathcal O'_{D,v})^\times
        =
        F_v^\times R_v^\times.
\]
After quotienting by $F_v^\times$, this gives
\[
        K_{T,R,v}\subset R_v^\times/\mathcal O_v^\times.
\]

Since $\mathcal K_{f,v}$ is open, there exists $m_v\ge1$, depending only on
$K_{f,v}$, such that
\[
        1+\mathfrak p_v^{m_v}\mathcal O_{D,v}
        \subset
        \mathcal K_{f,v}.
\]
Conjugating gives
$ 1+\mathfrak p_v^{m_v}\mathcal O'_{D,v}
        \subset
        \widetilde g_{R,v}\mathcal K_{f,v}\widetilde g_{R,v}^{-1}$.
Intersecting with $E_v^\times$, we obtain
$1+\mathfrak p_v^{m_v}R_v
        \subset
        \widetilde K_{T,R,v}$.
Thus the image of $1+\mathfrak p_v^{m_v}R_v$ in
$R_v^\times/\mathcal O_v^\times$ is contained in $K_{T,R,v}$.

Write
$ R_v=\mathcal O_v+\mathfrak p_v^{c_v}\mathcal O_{E_v}$.
If $c_v\ge1$, then
$R_v^\times =\mathcal O_v^\times\bigl(1+\mathfrak p_v^{c_v}\mathcal O_{E_v}\bigr)$,
and
\[
        1+\mathfrak p_v^{m_v}R_v
        \supset
        1+\mathfrak p_v^{m_v+c_v}\mathcal O_{E_v}.
\]
Therefore
\[
        \bigl[
        R_v^\times/\mathcal O_v^\times:K_{T,R,v}
        \bigr]
        \le
        \bigl[
        1+\mathfrak p_v^{c_v}\mathcal O_{E_v}:
        1+\mathfrak p_v^{c_v+m_v}\mathcal O_{E_v}
        \bigr]
        \ll_{F_v,m_v}1.
\]
The case $c_v=0$ is handled similarly, since $\mathcal O_{E_v}^{\times}/
        \mathcal O_v^\times(1+\mathfrak p_v^{m_v}\mathcal O_{E_v})$
has cardinality bounded in terms of $F_v$ and $m_v$.  
\end{proof}

The distinguished place $\infty$ is not included in the products below.  Its local
factor is fixed by the conditions
\[
        E_\infty\simeq F_{\infty,\star}
        \qquad
        \text{and}
        \qquad
        U_\infty=T_0(F_\infty),
\]
and is absorbed later in the global Waldspurger estimate.

\begin{lemma}
\label{lem:packet-size-lower-bound}
For every $\varepsilon>0$, one has
\[
        \#\calp_{E,R}
        \gg_{F,U,\varepsilon}
        D(E,R)^{1/2-\varepsilon}.
\]
\end{lemma}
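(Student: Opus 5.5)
We compute $\#\calp_{E,R}$ as the index of the finite toric level inside a projectivized idele class group, and then bound that group from below by class number asymptotics for the function field $E$. By Lemma~\ref{lem:local-fixed-level-comparison}, at each of the finitely many places where $K_f$ is not maximal, $K_{T,R,v}$ has index $\le C_v$ in $R_v^\times/\mathcal O_v^\times$. At every other place the two groups coincide. Hence
\[
        \#\calp_{E,R}\;\gg_{U}\;\#\bigl(E^\times\backslash\mathbb A_{E,f}^\times/\mathbb A_{F,f}^\times\widehat R^{\,\times}\bigr),
\]
and it suffices to bound this projectivized ring class group from below.

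\textbf{Exact sequence.} We write the projectivized ring class group as a quotient of the $A$-ring class group $\Pic(R)=E^\times\backslash\mathbb A_{E,f}^\times/\widehat R^{\,\times}$ by the image of $\Pic(A)$. This costs a bounded factor $\#\Pic(A)\ll_F 1$. Next we use the standard sequence
\[
1\to \mathcal O_{E}^\times/R^\times\to \widehat{\mathcal O}_E^{\,\times}/\widehat R^{\,\times}\to \Pic(R)\to\Pic(\mathcal O_E)\to 1,
\]
where $\mathcal O_E$ is the integral closure of $A$. Since $E$ is imaginary, the place $\infty$ does not split, so $\mathcal O_E^\times=k_E^\times$ has size at most $q^2-1$. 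The local indices are
\[
[\mathcal O_{E_v}^\times:R_v^\times]=q_v^{c_v}\,(1-\eta_{E,v}(\varpi_v)q_v^{-1})\quad\text{when } c_v\ge1.
\]
Their product is $\ge \Delta_R^{1/2}\prod_{v\mid\mathfrak f_R}(1-q_v^{-1})$. This product is $\gg_\varepsilon\Delta_R^{-\varepsilon}$ by the function field analogue of $\varphi(n)\gg n^{1-\varepsilon}$: the number of places of degree $d$ is $\ll q^d/d$, so the product is $\gg 1/\deg\mathfrak f_R$.

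\textbf{Class number of the maximal order.} It remains to show $\#\Pic(\mathcal O_E)\gg_\varepsilon D_E^{1/2-\varepsilon}$. Because $\infty$ is inert or ramified in $E$, $\Pic(\mathcal O_E)$ surjects onto $\Pic^0(E)$ up to a factor bounded by $\deg\infty_E\le 2\deg\infty$. So we need $h_E=\#\Pic^0(E)\gg_\varepsilon q^{g_E(1-\varepsilon)}$. By Riemann--Hurwitz, which holds for separable extensions including wild ramification when the different is used, $2g_E-2=2(2g_F-2)+\deg\mathfrak D_{E/F}$ up to constant-field extension. Hence $g_E=\tfrac12\deg\mathfrak D_{E/F}+O_F(1)$, after also accounting for $\deg\infty$ in the discriminant of $E_\infty$, which is fixed by $\star$.

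\textbf{Main obstacle.} The step I expect to be hardest is the lower bound $h_E=\prod_{i=1}^{2g_E}(1-\alpha_i)\ge(\sqrt{q_E}-1)^{2g_E}$ with $|\alpha_i|=\sqrt{q_E}$ (Weil), which is uniform and unconditional. For $q\ge 5$ this immediately gives $q^{g_E(1-\varepsilon)}$, but for small $q$ the crude bound $(\sqrt q-1)^{2g}$ is too weak. I would instead use $h_E=L_E(1)$ and write $\log L_E(1)$ via the explicit formula, comparing with $q^{g_E}$ through the functional equation. The Brauer--Siegel-type estimate $\log h_E\sim g_E\log q$ holds uniformly for function fields over fixed $\mathbb F_q$ (Tsfasman--Vl\u{a}du\c{t}), and it suffices here because the ratio $N_1(E)/g_E\to0$ along our sequence: a degree-$1$ place of $E$ lies over a place of $F$ of degree $\le 1$, and there are $O_F(1)$ of those. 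Alternatively, the quotient $h_E/h_F$ equals $L(1,\eta_E)$ up to bounded factors, and $L(s,\eta_E)$ is a polynomial in $q^{-s}$ of degree $\deg\mathfrak D_{E/F}+O(1)$ with roots on $|u|=q^{-1/2}$. RH then gives $\log|L(1,\eta_E)|\ge -\varepsilon\deg\mathfrak D_{E/F}-O_\varepsilon(1)$ by the Littlewood-style argument, since Euler-product truncation at degree $\approx\log\deg\mathfrak D$ controls the value. Combining this with $q^{\deg\mathfrak D_{E/F}/2}$ from the functional equation gives $h_E\gg_\varepsilon D_E^{1/2-\varepsilon}$. Multiplying the field and conductor contributions then yields $\#\calp_{E,R}\gg D(E,R)^{1/2-\varepsilon}$.
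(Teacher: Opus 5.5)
Your proposal is correct and follows the paper's route: reduce $\calp_{E,R}$ to the projectivized ring class group, lose only a factor $\#\Pic(A)$, and bound $h(R)$ from below by the class number formula for orders together with RH. You unpack that last step explicitly (local unit indices, Riemann--Hurwitz, and a Littlewood-type lower bound for $L(1,\eta_E)$), where the paper simply cites Guo--Wei and Deligne. Two small remarks. The inequality $\#\calp_{E,R}\ge\#\calp_R^{\max}$ already follows from the surjection and does not need the index comparison. More substantively, the Weil bound $(\sqrt q-1)^{2g_E}=q^{g_E(1-c_q)}$, with fixed $c_q=-2\log_q(1-q^{-1/2})>0$, never gives $q^{g_E(1-\varepsilon)}$ for every $\varepsilon$, even when $q\ge 5$; so you should use your RH/Euler-product argument, or Tsfasman--Vl\u{a}du\c{t}, uniformly in $q$.
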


\begin{proof}
Let
\[
        \calp_R^{\max}
        :=
        T_E(F)\backslash T_E(\mathbb A_{F,f})/
        \bigl(\widehat R^{\,\times}/\widehat A^{\,\times}\bigr).
\]
By \eqref{e:kt} we have $K_{T,R}\subset \widehat R^{\,\times}/\widehat A^{\,\times}$.
Hence there is a natural surjection
$\calp_{E,R}\twoheadrightarrow \calp_R^{\max}$.

Consider
\[
        \Pic(R):=
        E^\times\backslash\mathbb A_{E,f}^{\times}/\widehat R^{\,\times}
        \qquad
        \text{and}
        \qquad
        h(R):=\#\Pic(R).
\]
The natural map
$\Pic(R)\rightarrow \calp_R^{\max}$
has kernel contained in the image of
$\Pic(A)=F^\times\backslash\mathbb A_{F,f}^{\times}/\widehat A^{\,\times}$.
Since $\Pic(A)$ depends only on $F$,
\[
        \#\calp_R^{\max}\gg_F h(R).
\]
The function field class number formula for the order $R$ (see, for instance, \cite[Lemma~2.3]{guo-wei-class-number}) together with RH as in \cite[Thm.~3.3.1]{deligne-weilII}, gives
$h(R)\gg_{F,\varepsilon}D(E,R)^{1/2-\varepsilon}$.
Therefore
\[
        \#\calp_{E,R}
        \gg_{F,U,\varepsilon}
        D(E,R)^{1/2-\varepsilon}.
\]
\end{proof}

\begin{lemma}
\label{lem:elmv-local-building-integral-n2}
Let $k$ be a non-archimedean local field, let $\mathcal A/k$ be a quadratic
\'etale algebra, and let
$\iota\colon \mathcal A\hookrightarrow M_2(k)$
be a $k$-algebra embedding.  Let $g\in \PGL_2(k)$, and put
$\Lambda(g)
        :=
        \mathcal A\cap gM_2(\mathcal O_k)g^{-1}$.
Define
\[
        \Delta(g)
        :=
        \frac{\disc(\Lambda(g))}
             {\disc(\mathcal O_{\mathcal A})},
        \qquad
        b(g):=\operatorname{ord}_k\Delta(g).
\]
Let $\Xi_k$ be the Harish--Chandra spherical function of $\PGL_2(k)$ attached to
$\PGL_2(\mathcal O_k)$.  Then there exist constants
$A_0:=A_0(\pi_v,f_v,K_{f,v})\geq 0$ and $\eta_0:=\eta_0(\pi_v,f_v,K_{f,v})>0$
 such that
\[
        \int_{\mathcal A^\times/k^\times}
        \Xi_k(g^{-1}\iota(t)g)\,d^\times t
        \ll
        (b(g)+1)^{A_0}
        \vol(\mathcal O_{\mathcal A}^{\times}/\mathcal O_k^\times)
        \Delta(g)^{-\eta_0}.
\tag{ELMV}
\]
The implied constant is uniform in $k$, in the quadratic \'etale algebra
$\mathcal A/k$, in the embedding $\iota$, and in $g$. 
\end{lemma}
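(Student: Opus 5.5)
We work on the Bruhat--Tits tree $\mathcal T$ of $\PGL_2(k)$. Write $x_0$ for the vertex fixed by $\PGL_2(\mathcal O_k)$. Put $x:=gx_0$ and $\Xi(y):=\Xi_k(h)$ whenever $y=hx_0$; this is well defined because $\Xi_k$ is bi-$\PGL_2(\mathcal O_k)$-invariant. We have $\Xi(y)\asymp (d(x_0,y)+1)\,q_k^{-d(x_0,y)/2}$, where $q_k:=\#(\mathcal O_k/\mathfrak p_k)$ denotes the residue cardinality. Then
\[
\Xi_k(g^{-1}\iota(t)g)=\Xi\bigl(d(x,\iota(t)x)\bigr),
\]
so the integral equals
\[
\int_{\mathcal A^\times/k^\times}\Xi\bigl(d(x,tx)\bigr)\,d^\times t .
\]
This depends only on the $\mathcal A^\times$-orbit of $x$ in $\mathcal T$. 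The plan has three steps: identify $b(g)$ with the distance from $x$ to the fixed locus of $\mathcal O_{\mathcal A}^\times$; compute $d(x,tx)$ in terms of this distance; and sum the spherical decay over the orbit.

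\textbf{Step 1 (geometry).} The orders $\Lambda(g)=\mathcal A\cap\End(L)$, for lattices $L$ representing $x$, are exactly the orders $\mathcal O_k+\mathfrak p_k^{c}\mathcal O_{\mathcal A}$. It is classical that $c$ equals the distance $d(x,\mathcal T^{\mathcal O_{\mathcal A}^\times})$ from $x$ to the fixed set of $\mathcal O_{\mathcal A}^\times$. That fixed set is a single vertex or an edge midpoint when $\mathcal A$ is a field (unramified or ramified), and an apartment when $\mathcal A$ is split. The argument is lattice-theoretic: one checks that $L$ is a module over $\mathcal O_k+\mathfrak p_k^c\mathcal O_{\mathcal A}$ iff $L$ lies within distance $c$ of that set. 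It works in residue characteristic $2$ as well, because discriminants are taken with respect to the trace form, and $\disc(\mathcal O_k+\mathfrak p_k^c\mathcal O_{\mathcal A})=\mathfrak p_k^{2c}\disc(\mathcal O_{\mathcal A})$. Hence $b(g)=2c$ and $\Delta(g)=q_k^{2c}$.

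\textbf{Step 2 (orbit distances).} Let $y$ be the projection of $x$ onto the fixed set. For $t$ in $\mathcal O_{\mathcal A}^\times$, together with its translates by $\varpi$-powers in the split case, we have $d(x,tx)=2\bigl(c-j(t)\bigr)+d(y,ty)$. Here $j(t)$ is the largest $j\le c$ such that $t\in k^\times\bigl(\mathcal O_k+\mathfrak p_k^{j}\mathcal O_{\mathcal A}\bigr)^\times$, i.e.\ $j(t)$ is the length of the common segment of the geodesics $[y,x]$ and $[y,tx]$. The measure of the set $\{t : j(t)\ge j\}$ is
\[
\vol\bigl((\mathcal O_k+\mathfrak p_k^j\mathcal O_{\mathcal A})^\times/\mathcal O_k^\times\bigr)\asymp q_k^{-j}\,\vol(\mathcal O_{\mathcal A}^\times/\mathcal O_k^\times),
\]
where the implied constants are absolute, since the index is $(q_k\pm1)q_k^{j-1}$ or $q_k^j$. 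In the field case $\mathcal A^\times/k^\times\mathcal O_{\mathcal A}^\times$ has order $1$ or $2$, and $d(y,ty)\le1$.

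\textbf{Step 3 (summation).} In the field case we sum over level sets:
\[
\int\ll\vol(\mathcal O_{\mathcal A}^\times/\mathcal O_k^\times)\sum_{j=0}^{c}q_k^{-j}\,(c-j+1)\,q_k^{-(c-j)}\ll(c+1)^2\,q_k^{-c}\,\vol(\mathcal O_{\mathcal A}^\times/\mathcal O_k^\times).
\]
The right-hand side is $(b+1)^2\Delta^{-1/2}\vol(\cdots)$, giving $A_0=2$ and $\eta_0=1/2$. Any $\eta_0\le1/2$ is allowed, which covers the dependence on $\pi_v,f_v,K_{f,v}$ in the statement.

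The split case is the main obstacle, because $\mathcal A^\times/k^\times\simeq k^\times$ is noncompact. There we also integrate over the translation $\varpi^n$ along the apartment, which adds $2|n|$ to $d(y,ty)$. Geometric summation over $n$ gives
\[
\sum_n(|n|+c+1)\,q_k^{-|n|}\ll c+1,
\]
because $\Xi$ decays like $q_k^{-d/2}$ and $2|n|\cdot\tfrac12=|n|$. This step needs care: the convergence is exactly borderline-safe, since temperedness of $\Xi$ against a one-dimensional torus is what guarantees finiteness, and the uniformity must be tracked independently of $q_k$. All constants come only from $\Xi(d)\asymp(d+1)q_k^{-d/2}$ and from the index computations above, so the bound is uniform in $k$, $\mathcal A$, $\iota$ and $g$.
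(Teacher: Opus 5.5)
Your proof is correct, and it takes a genuinely different route from the paper. The paper does no computation. It quotes the $n=2$ case of the local harmonic analysis of \cite[\S7--\S9]{ELMV3}: a lower bound for the distance from the standard norm to the torus orbit of the norm attached to $\mathcal A$ \cite[Lemma~9.12]{ELMV3}, fed through the Harish--Chandra bound and the shell estimates. That route gives an unspecified $\eta_0>0$. It is also what forces the paper's separate remark on residue characteristic $2$, where an additive $O(1)$ in the distance bound has to be tracked.

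You instead compute the orbit exactly on the Bruhat--Tits tree. The conductor $c$ of $\Lambda(g)=\mathcal O_k+\mathfrak p_k^{c}\mathcal O_{\mathcal A}$ is the distance from $x=gx_0$ to the locus of $\mathcal O_{\mathcal A}$-stable lattice classes. The distance $d(x,tx)$ is read off from the filtration $k^\times(\mathcal O_k+\mathfrak p_k^{j}\mathcal O_{\mathcal A})^\times$, whose indices are $\asymp q_k^{j}$ with absolute constants. Summing $\Xi(d)\asymp(d+1)q_k^{-d/2}$ over level sets gives $\ll(c+1)^2q_k^{-c}\vol(\mathcal O_{\mathcal A}^\times/\mathcal O_k^\times)$.

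What your route buys:
\begin{itemize}
\item explicit exponents $A_0=2$ and $\eta_0=1/2$, which are sharp, since each level set contributes $\asymp(c-j+1)q_k^{-c}\vol(\mathcal O_{\mathcal A}^\times/\mathcal O_k^\times)$;
\item constants that are visibly absolute in $k$ and independent of $\pi_v,f_v,K_{f,v}$;
\item residue characteristic $2$ at no extra cost, since every step is lattice-theoretic.
\end{itemize}
The ELMV route buys generality (tori in $\GL_n$), which this lemma does not need.

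Two statements need adjusting, though neither affects the final bound.

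\emph{The target set.} You should measure distance to the locus $Y$ of $\mathcal O_{\mathcal A}$-stable lattice classes (a vertex, the two endpoints of an edge, or an apartment), not to the fixed set of $\mathcal O_{\mathcal A}^\times$. The two differ in two places:
\begin{itemize}
\item In the ramified case the fixed set of $\mathcal O_{\mathcal A}^\times$ is the closed edge, not its midpoint.
\item When $\mathcal A$ is split and $q_k=2$, the image of $\mathcal O_{\mathcal A}^\times$ in $\mathcal A^\times/k^\times$ is $\mathcal O_k^\times=1+\mathfrak p_k$. This group fixes the whole $1$-neighbourhood of the apartment, so your identification would be off by one there.
\end{itemize}
Your lattice-theoretic criterion is the correct one and should serve as the definition.

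\emph{The split translation length.} In the split case, $(\varpi^n,1)\in\mathcal A^\times\simeq k^\times\times k^\times$ translates the apartment by $|n|$, not $2|n|$. So for $n\neq0$ one has $d(x,tx)=2c+|n|$, and these cosets contribute
$\vol(\mathcal O_{\mathcal A}^\times/\mathcal O_k^\times)\sum_{n\neq0}(2c+|n|+1)q_k^{-c-|n|/2}\ll(c+1)q_k^{-c}\vol(\mathcal O_{\mathcal A}^\times/\mathcal O_k^\times)$.
This is uniform in $q_k$ because $q_k^{-1/2}\le2^{-1/2}$. The convergence is geometric, not borderline.
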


\begin{proof}
This amount just to the $n=2$ specialization of the local harmonic analysis in
\cite[\S7--\S9]{ELMV3}.

\end{proof}

\begin{remark}[Residue characteristic $2$]{\em
\label{rem:elmv-small-characteristic}
The discriminants in Lemma~\ref{lem:elmv-local-building-integral-n2} are trace-dual  discriminants \cite[\S9.4--\S9.5]{ELMV3}, hence defined for
every separable quadratic \'etale algebra $\mathcal A/k$, including wild ramification
in residue characteristic $2$. Residue characteristic enters only through  \cite[Lemma~9.12]{ELMV3},
\[
        \inf_{t\in\mathcal A^\times}
        \operatorname{dist}\bigl(N_0,t\,\iota^{-1}N_{\mathcal A}\bigr)
        \geq
        c_n\log\Delta(g)-O_n(1),
\]
where for $n=2$ in  characteristic $2$ the additive $O_n(1)$ cannot be
discarded. The leading coefficient $c_n>0$ is, however, unaffected; the lost term
depends only on $n$ and, passing through the Harish--Chandra bound
\cite[\S7.7,~(20)]{ELMV3} and the shell estimates \cite[Lemmas~9.13--9.14]{ELMV3},
rescales only the implied constant. Hence
Lemma~\ref{lem:elmv-local-building-integral-n2} holds in residue characteristic $2$
with the same exponent $\eta_0$.
}
\end{remark}

\begin{lemma}
\label{lem:split-local-matrix-coefficient-estimate}
Let $v\neq \infty$ be a finite place such that
\[
        D_v\simeq M_2(F_v).
\]
Let $f_v$ belong to the fixed finite-dimensional space of $K_{f,v}$-fixed
vectors occurring in $\pi_v$.  Then there exist constants
$A:=A(\pi_v,f_v,K_{f,v})\geq 0$ and $\eta:=\eta(\pi_v,f_v,K_{f,v})>0$ such that, uniformly in
$E_v/F_v$, $R_v$, the local embedding, and the packet character $\chi_v$,
one has
\[
        \alpha_v^\natural
        \bigl(
        f_{E,R,v},\overline f_{E,R,v};\chi_v
        \bigr)
        \ll_{f_v,K_{f,v}}
        (n_v(E,R)+1)^A
        \vol(\mathcal O_{E_v}^{\times}/\mathcal O_v^\times)
        q_v^{-\eta b_v(E,R)}.
\tag{S}
\]
For all but finitely many $v$, the constants are absolute and the normalized
unramified factor is equal to $1$ when $R_v=\mathcal O_{E_v}$.
\end{lemma}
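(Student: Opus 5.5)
We reduce the local period $\alpha_v^\natural$ to the building integral of Lemma~\ref{lem:elmv-local-building-integral-n2}. Fix $v$ with $D_v\simeq M_2(F_v)$ and identify $G(F_v)\simeq\PGL_2(F_v)$. The local vector is the translate $f_{E,R,v}=\pi_v(g_{R,v})f_v$, so, up to the fixed normalizing factor,
\[
\alpha_{\pi_v}\bigl(f_{E,R,v},\overline f_{E,R,v};\chi_v\bigr)
=\int_{E_v^\times/F_v^\times}\bigl(\pi_v(g_{R,v}^{-1}\iota(t)g_{R,v})f_v,f_v\bigr)_v\,\chi_v(t)\,dt .
\]

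\textbf{Step 1: bound the matrix coefficient.} Since $f_v$ is $\mathcal K_{f,v}$-fixed and $\mathcal K_{f,v}$ has bounded index in $\mathcal O_{D,v}^\times$, the span of $\pi_v(\PGL_2(\mathcal O_v))f_v$ is finite-dimensional. The standard bound on $K$-finite matrix coefficients of a unitary generic representation then applies. Such representations are tempered, or at worst satisfy a Kim--Sarnak type bound over function fields; for cuspidal $\Pi$ over $F$ they are tempered by Drinfeld--Lafforgue. This gives
\[
|(\pi_v(h)f_v,f_v)|\ll_{f_v,K_{f,v}}\Xi_{F_v}(h)^{1-\delta}
\]
for some $\delta<1$, with $\delta=0$ in the tempered case. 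If we lose a power, we use $\Xi^{1-\delta}$; the proof of (ELMV) in \cite[\S9]{ELMV3} only uses exponential decay of $\Xi$ along the distance to the torus orbit, so a power of $\Xi$ changes only $\eta_0$ and $A_0$.

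\textbf{Step 2: apply (ELMV).} Bounding $|\chi_v|=1$ removes all dependence on $\chi_v$, which gives the uniformity in the character. Lemma~\ref{lem:elmv-local-building-integral-n2} with $k=F_v$, $\mathcal A=E_v$ and $g=g_{R,v}$ then bounds the integral by
\[
(b(g)+1)^{A_0}\,\vol(\mathcal O_{E_v}^\times/\mathcal O_v^\times)\,\Delta(g)^{-\eta_0}.
\]
We identify $\Lambda(g)=E_v\cap\mathcal O'_{D,v}=R_v$. Because $R_v$ has conductor $\mathfrak p_v^{c_v}$, its discriminant is $\mathfrak D_{E_v}\mathfrak p_v^{2c_v}$, so $b(g)=b_v(E,R)$ and $\Delta(g)=q_v^{b_v(E,R)}$. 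The dependence on the lift $\widetilde g_{R,v}$ disappears, since everything passes to $\PGL_2$.

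\textbf{Step 3: control the normalizing factor.} It remains to bound
\[
\frac{L(1,\eta_{E,v})L(1,\pi_v,\mathrm{Ad})}{L(2,1_{F_v})L(1/2,\Pi_{E,v}\otimes\chi_v)}
\]
uniformly. The numerator is bounded, because $L(1,\eta_{E,v})\le(1-q_v^{-1})^{-1}$ and $\pi_v$ is fixed. The danger is the denominator: $L(1/2,\Pi_{E,v}\otimes\chi_v)^{-1}$ is a polynomial in the local Satake parameters times $\chi_v(\varpi)$. These parameters are bounded in absolute value by $q_v^{\theta}$ with $\theta<1/2$ (or $\theta=0$ if tempered), so the reciprocal is bounded by $(1+q_v^{\theta-1/2})^4\ll1$. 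This absorbs into the implied constant, and the powers of $n_v+1$ and $b_v+1$ fit into $(n_v+1)^A$, giving (S).

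\textbf{Step 4: the almost-all statement.} For $v$ outside a finite set, $\pi_v$ is unramified, $K_{f,v}=\PGL_2(\mathcal O_v)$, $f_v$ is spherical and $E_v/F_v$ is unramified. When $R_v=\mathcal O_{E_v}$, the translate is spherical and the standard unramified computation (Waldspurger, Qiu's normalization) gives $\alpha_v^\natural=1$.

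The main obstacle is Step 1 with the required uniformity: the constants must not depend on $E_v$ when $E_v/F_v$ is wildly ramified in residue characteristic $2$. We handle this through Remark~\ref{rem:elmv-small-characteristic}, which ensures that the decay $\Xi(h)\ll q_v^{-d(h)/2}(1+d(h))$ combined with the shell estimates is insensitive to that case.
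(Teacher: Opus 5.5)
Your proposal is correct and follows essentially the same route as the paper. Temperedness of $\pi_v$ (via Lafforgue) gives the Harish--Chandra bound $|\langle\pi_v(h)f_v,f_v\rangle|\ll\Xi_v(h)$, and Lemma~\ref{lem:elmv-local-building-integral-n2} applied with $g=g_{R,v}$ and $\Lambda(g)=R_v$ then yields (S).

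Your Step~3 makes explicit the boundedness of the normalizing $L$-factors, which the paper only asserts. The $\Xi^{1-\delta}$ hedge is unnecessary once Lafforgue's temperedness is invoked. Also, the residue-characteristic-$2$ uniformity issue belongs to Step~2 rather than Step~1, since the matrix-coefficient bound does not see $E_v$ at all.
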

\begin{proof}
Fix the isomorphism $D_v\simeq M_2(F_v)$ so that the fixed maximal order
$\mathcal O_{D,v}$ is identified with $M_2(\mathcal O_v)$. 
By normalization of the local period
\cite[\S1.2.3]{qiu}, $\alpha_v^\natural$ is bounded, up to local normalizing
$L$-factors bounded in this fixed unitary family, by
\[
        \int_{F_v^\times\backslash E_v^\times}
        \left|
        \left\langle
        \pi_v(g_{E,R,v}^{-1}tg_{E,R,v})f_v,f_v
        \right\rangle
        \right|\,d^\times t .
\tag{1}
\]
Since $D_v$ is split, $\pi_v$ is the local component of the $\GL_2$-representation
$\Pi$.  By \cite[Thm.~VI.10(i)]{lafforgue}, $\pi_v$ is tempered.  Hence the standard
$K$-finite matrix-coefficient bound gives
\[
        \left|
        \left\langle
        \pi_v(h)f_v,f_v
        \right\rangle
        \right|
        \ll_{f_v,K_{f,v}}
        \Xi_v(h),
\tag{2}
\]
where $\Xi_v$ is the Harish--Chandra spherical function of $\PGL_2(F_v)$.  Therefore
\begin{equation}
\label{e:8.3}
        \alpha_v^\natural
        \bigl(
        f_{E,R,v},\overline f_{E,R,v};\chi_v
        \bigr)
        \ll_{f_v,K_{f,v}}
        \int_{F_v^\times\backslash E_v^\times}
        \Xi_v(g_{E,R,v}^{-1}tg_{E,R,v})\,d^\times t .
\end{equation}

We now apply 
Lemma~\ref{lem:elmv-local-building-integral-n2}.  The associated local order is
$\Lambda_v
        =
        E_v\cap g_{E,R,v}M_2(\mathcal O_v)g_{E,R,v}^{-1}$.
Since $g_{E,R,v}=g_{R,v}$ at finite places, admissibility gives $\Lambda_v=R_v$.
Thus, with trace-dual discriminants,
\begin{equation}\label{e:8.4}
        \frac{\disc(\Lambda_v)}
             {\disc(\mathcal O_{E_v})}
        =
        q_v^{b_v(E,R)}.
\end{equation}
Combining \eqref{e:8.3}, Lemma~\ref{lem:elmv-local-building-integral-n2},
and \eqref{e:8.4}, we get
\[
        \alpha_v^\natural
        \bigl(
        f_{E,R,v},\overline f_{E,R,v};\chi_v
        \bigr)
        \ll_{f_v,K_{f,v}}
        (n_v(E,R)+1)^A
        \vol(\mathcal O_{E_v}^{\times}/\mathcal O_v^\times)
        q_v^{-\eta b_v(E,R)}.
\]
This proves $(S)$.

At unramified  places with $R_v=\mathcal O_{E_v}$, the normalization \cite[\S1.2.3]{qiu} gives
$\alpha_v^\natural
        \bigl(
        f_{E,R,v},\overline f_{E,R,v};\chi_v
        \bigr)=1$.
\end{proof}

\begin{lemma}
\label{lem:division-place-filtration-estimate}
Let $v\neq \infty$ be a place where $D_v$ is division. 
Let $E_v/F_v$ be a separable quadratic field embedded in $D$, and let
$ R_v=\mathcal O_v+\mathfrak f_{R,v}\mathcal O_{E_v}$
be the local order arising from an admissible fixed-level packet.  Write
$ \mathfrak f_{R,v}=\mathfrak p_v^{c_v}$.
Then $R_v=\mathcal O_{E_v}$.
In particular,
\[
        c_v=0,
        \qquad
        b_v(E,R)=0.
\tag{D1}
\]
Moreover, for every fixed $f_v\in\pi_v^{K_{f,v}}$,
\[
        \frac{
        \alpha_v^\natural
        \bigl(
        f_{E,R,v},\overline f_{E,R,v};\chi_v
        \bigr)
        }{
        \vol(K_{T,R,v})^2
        }
        \ll_{f_v,K_{f,v}}
        q_v^{a_v(E)/2}.
\tag{D2}
\]
The constants are uniform in $E_v/F_v$, in the embedding
$E_v\hookrightarrow D_v$, and in the packet character $\chi_v$.
\end{lemma}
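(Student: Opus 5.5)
The plan has two parts: first the structural claim (D1), then the period bound (D2). Throughout, $D_v$ is the quaternion division algebra over $F_v$.

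\textbf{Step 1: (D1).} Let $w$ be the unique valuation of $D_v$ extending $\operatorname{ord}_v$, normalized so that $w(\varpi_D)=1$, where $\varpi_D$ is a uniformizer with $\varpi_D^2\in\varpi_v\mathcal O_{D,v}^\times$. Then
\[
\mathcal O_{D,v}=\{x\in D_v: w(x)\ge 0\}
\]
is the \emph{unique} maximal order. It is therefore normalized by all of $D_v^\times$, so the conjugate order satisfies
\[
\mathcal O'_{D,v}=\widetilde g_{R,v}\mathcal O_{D,v}\widetilde g_{R,v}^{-1}=\mathcal O_{D,v}.
\]
Since $E_v$ is a field embedded in $D_v$, the restriction of $w$ to $E_v$ is a valuation extending $\operatorname{ord}_v$. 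By uniqueness of extensions of valuations to the finite extension $E_v/F_v$, this restriction is, up to scaling, the valuation of $E_v$. Hence
\[
R_v=E_v\cap\mathcal O_{D,v}=\{x\in E_v: w(x)\ge0\}=\mathcal O_{E_v}.
\]
This gives $c_v=0$ and $b_v(E,R)=0$. The argument uses no tameness assumption, so it is valid in residue characteristic $2$ as well.

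\textbf{Step 2: (D2).} Because $D_v^\times/F_v^\times$ is compact, $\pi_v$ is finite-dimensional. I therefore bound the local period trivially: the matrix coefficient is bounded by $\|f_v\|^2$, and $\chi_v$ is unitary, so
\[
|\alpha_{\pi_v}|\le \|f_v\|^2\operatorname{vol}(E_v^\times/F_v^\times).
\]
Next I control the normalizing $L$-factors in $\alpha_v^\natural$.
\begin{itemize}
\item $L(1,\eta_{E,v})\le (1-q_v^{-1})^{-1}$.
\item $L(1,\pi_v,\operatorname{Ad})$ and $L(2,1_{F_v})$ are fixed quantities.
\item $L(1/2,\Pi_{E,v}\otimes\chi_v)^{-1}$ is the reciprocal of an $L$-factor with at most two Satake-type parameters. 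These parameters have absolute value $\le q_v^{1/2}$ (more precisely $\le1$, by Ramanujan via Lafforgue), which bounds the reciprocal uniformly.
\end{itemize}
So $\alpha_v^\natural\ll_{f_v}\operatorname{vol}(E_v^\times/F_v^\times)$.

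\textbf{Step 3: volume comparison.} By Lemma~\ref{lem:local-fixed-level-comparison} together with Step 1,
\[
\operatorname{vol}(K_{T,R,v})\asymp\operatorname{vol}(\mathcal O_{E_v}^\times/\mathcal O_v^\times).
\]
The index $[E_v^\times/F_v^\times:\mathcal O_{E_v}^\times/\mathcal O_v^\times]$ is $1$ if $E_v/F_v$ is unramified and $2$ if it is ramified. Hence the ratio in (D2) is
\[
\ll\operatorname{vol}(\mathcal O_{E_v}^\times/\mathcal O_v^\times)^{-1}.
\]

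\textbf{Main obstacle.} The main obstacle is that this inverse volume must be computed against the measure normalization used in Qiu's formula, which fixes $\operatorname{vol}(\mathcal O_{E_v}^\times/\mathcal O_v^\times)$ in terms of the local discriminant. With the self-dual measure relative to the trace form, one gets
\[
\operatorname{vol}(\mathcal O_{E_v}^\times/\mathcal O_v^\times)\asymp L(1,\eta_{E,v})\,|\mathfrak D_{E_v}|^{1/2}=L(1,\eta_{E,v})\,q_v^{-a_v(E)/2}.
\]
Its inverse is therefore $\ll q_v^{a_v(E)/2}$, which is the bound (D2). I would verify this via the Tamagawa-type normalization $dx_{E_v}$ self-dual for $\psi_v\circ\operatorname{tr}$, for which $\operatorname{vol}(\mathcal O_{E_v})=|\mathfrak D_{E_v}|^{1/2}\operatorname{vol}(\mathcal O_v)^2$. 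I would check that this identity holds for wild ramification as well, since trace-dual discriminants are used throughout. The remaining dependence is only on $f_v$ and $K_{f,v}$, and uniformity in $E_v$, the embedding and $\chi_v$ is clear from the argument.
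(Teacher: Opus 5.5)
Your proposal is correct and follows the paper's proof. For (D1), uniqueness of the maximal order of $D_v$ gives $R_v=E_v\cap\mathcal O_{D,v}=\mathcal O_{E_v}$. For (D2), the argument combines the trivial unitary bound on the toric matrix coefficient, the volume comparison of Lemma~\ref{lem:local-fixed-level-comparison}, and $\vol(\mathcal O_{E_v}^{\times}/\mathcal O_v^\times)\asymp q_v^{-a_v(E)/2}$. Your explicit bounds on the normalizing $L$-factors and your self-dual-measure derivation of this last volume fill in steps that the paper only asserts.
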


\begin{proof}
Let $\mathcal O_{D_v}$ be the maximal order of  $D_v$.
Since $D_v/F_v$ is division, $\mathcal O_{D_v}$ is the unique maximal order.  Hence
\[
        g\mathcal O_{D_v}g^{-1}=\mathcal O_{D_v}
        \qquad(g\in D_v^\times).
\]
Since $D_v$ is division, its maximal order $\mathcal O_{D,v}$ is unique.  Hence
$g\mathcal O_{D,v}g^{-1}=\mathcal O_{D,v}$ for every $g\in D_v^\times$.  Therefore
the order cut out by the packet is
\[
        R_v
        =
        E_v\cap g\mathcal O_{D,v}g^{-1}
        =
        E_v\cap\mathcal O_{D,v}.
\]
This intersection is $\mathcal O_{E_v}$: an element of $E_v$ lying in
$\mathcal O_{D,v}$ is integral over $\mathcal O_v$, and conversely every element of
$\mathcal O_{E_v}$ is integral over $\mathcal O_v$ and hence lies in the unique
maximal order $\mathcal O_{D,v}$.  Thus $R_v=\mathcal O_{E_v}$. Therefore
$c_v=0$ and
$b_v(E,R)=0$
which proves $(D1)$.

For $(D2)$, by the definition of $\alpha_{\pi_v}^{\sharp}$ as in \cite[\S1.2.3]{qiu}, the factor $\alpha_v^\natural$ is the toric
matrix-coefficient integral multiplied by local normalizing $L$-factors.

Since $\chi_v$ is unitary, the local factor is bounded by the absolute toric
matrix-coefficient integral
\[
        \alpha_v^\natural
        \bigl(
        f_{E,R,v},\overline f_{E,R,v};\chi_v
        \bigr)
        \ll_{f_v,K_v}
        \int_{F_v^\times\backslash E_v^\times}
        \left|
        \left\langle
        \pi_v(t)f_{E,R,v},f_{E,R,v}
        \right\rangle
        \right|\,d^\times t .
\]
The representation $\pi_v$ is unitary, and $f_{E,R,v}$ is a translate of the fixed
vector $f_v$.  Hence
\[
        \left|
        \left\langle
        \pi_v(t)f_{E,R,v},f_{E,R,v}
        \right\rangle
        \right|
        \leq
        \|f_v\|^2 .
\]
Moreover $E_v/F_v$ is a field, so $F_v^\times\backslash E_v^\times$ is compact and its
valuation quotient has order at most $2$.  Therefore
$\vol(k^\times\backslash E_v^\times)
        \asymp_{F_v}
        \vol(\mathcal O_{E_v}^{\times}/\mathcal O_v^\times)$.

\begin{equation}\label{e:10.1}
        \alpha_v^\natural
        \bigl(
        f_{E,R,v},\overline f_{E,R,v};\chi_v
        \bigr)
        \ll_{f_v,K_v}
        \vol(\mathcal O_{E_v}^{\times}/\mathcal O_v^\times).
\end{equation}
By Lemma~\ref{lem:local-fixed-level-comparison} and $R_v=\mathcal O_{E_v}$,
\begin{equation}\label{e:10.2}
        \vol(K_{T,R,v})
        \asymp_{K_{f,v}}
        \vol(\mathcal O_{E_v}^{\times}/\mathcal O_v^\times).
\end{equation}

Finally,
\begin{equation}\label{e:10.3}
        \vol(\mathcal O_{E_v}^{\times}/\mathcal O_v^\times)
        \asymp_{F_v}
        q_v^{-a_v(E)/2}.
\end{equation}
Combining \eqref{e:10.1}, \eqref{e:10.2}, and \eqref{e:10.3} gives
\[
        \frac{
        \alpha_v^\natural
        \bigl(
        f_{E,R,v},\overline f_{E,R,v};\chi_v
        \bigr)
        }{
        \vol(K_{T,R,v})^2
        }
        \ll_{f_v,K_{f,v}}
        q_v^{a_v(E)/2}.
\]
This proves $(D2)$.
\end{proof}

Let
\[
        f=\otimes_v' f_v\in\pi^U
\]
be a fixed decomposable vector, where $\pi$ is a cuspidal automorphic
representation of $D^\times(\mathbb A_F)$ with trivial central character.  For
admissible CM data $(E,R)$, put
\[
        g_{E,R}:=z(1,g_{R,f}),
        \qquad
        f_{E,R}(g):=f(g\,g_{E,R}).
\]

\begin{proposition}
\label{prop:local-waldspurger-factor-estimate}
There exists $\delta=\delta(f,\pi,U)>0$ such that, for every admissible CM datum
$(E,R)$, every packet character $\chi$ attached to $(E,R)$, and every
$\varepsilon>0$,
\[
        \frac{
        \prod_{v\neq \infty}
        \alpha_v^\natural
        \bigl(f_{E,R,v},\overline f_{E,R,v};\chi_v\bigr)
        }{
        \operatorname{vol}(K_{T,R})^2\,\#\calp_{E,R}^{\,2}
        }
        \ll_{f,F,U,\varepsilon}
        D(E,R)^{-\delta+\varepsilon}.
\]
The implied constants are uniform as $\chi$ ranges over packet characters attached
to $(E,R)$.
\end{proposition}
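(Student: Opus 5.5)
We factor the left-hand side place by place and show that only finitely many places contribute anything beyond a product of volumes. The global packet size then absorbs those volumes through the class number formula.

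\emph{Step 1: factor the volume.} Write $\vol(K_{T,R})=\prod_{v\neq\infty}\vol(K_{T,R,v})$ for the product Haar measure on $T_E(\mathbb A_{F,f})$; the factor at $\infty$ was already separated off in \eqref{eq:compare-weyl-period}. The quotient to be bounded then becomes
\[
        \frac{1}{\#\calp_{E,R}^{\,2}}
        \prod_{v\neq\infty}
        \frac{\alpha_v^\natural(f_{E,R,v},\overline f_{E,R,v};\chi_v)}{\vol(K_{T,R,v})^2}.
\]

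\emph{Step 2: sort the places into three classes.}
\begin{enumerate}
\item A finite set $S_0$, depending only on $f,\pi,U$: the places where $\pi_v$ or $K_{f,v}$ is ramified, or where $D_v$ is division.
\item The places outside $S_0$ with $n_v(E,R)=0$. There the unramified normalization of Lemma~\ref{lem:split-local-matrix-coefficient-estimate} gives $\alpha_v^\natural=1$, and $\vol(K_{T,R,v})=\vol(\mathcal O_{E_v}^\times/\mathcal O_v^\times)$ equals $1$ after the standard normalization.
\item The remaining places, those outside $S_0$ with $n_v(E,R)\geq1$. These divide $\mathfrak D_{E,R}$.
\end{enumerate}

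\emph{Step 3: bound the local ratio at split places.} At a split place in the third class, or in $S_0$, combine (S) with Lemma~\ref{lem:local-fixed-level-comparison}, which gives $\vol(K_{T,R,v})\asymp\vol(R_v^\times/\mathcal O_v^\times)$. Use also
\[
        \vol(R_v^\times/\mathcal O_v^\times)\asymp q_v^{-a_v/2-b_v/2},
\]
which follows from the index $[\mathcal O_{E_v}^\times:R_v^\times]\asymp q_v^{c_v}$ and $b_v=2c_v$. The resulting bound is
\[
        \frac{\alpha_v^\natural}{\vol(K_{T,R,v})^2}\ll (n_v+1)^A\, q_v^{a_v/2+b_v-\eta b_v}.
\]
At places outside $S_0$ the constants in this bound must be absolute. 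I would extract this from the uniformity clause of Lemma~\ref{lem:split-local-matrix-coefficient-estimate}: temperedness gives (2) with constant $1$ for spherical vectors.

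\emph{Step 4: bound the local ratio at division places.} At division places, (D2) gives the bound $q_v^{a_v/2}$ with $b_v=0$.

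\emph{Step 5: multiply.} Taking the product over all finite places yields
\[
        \prod_{v\neq\infty}\frac{\alpha_v^\natural}{\vol(K_{T,R,v})^2}
        \ll C^{|S_0|}\prod_{v\mid\mathfrak D_{E,R}}C(n_v+1)^A\cdot D_E^{1/2}\,\Delta_R^{1-\eta'},
\]
where $\eta'=\min_v\eta_v>0$. If $\eta_v$ is not uniform in $v$, I would instead use $\eta'=\min(\eta_0,\min_{S_0}\eta_v)$, taking $\eta_0$ as the uniform exponent of Lemma~\ref{lem:elmv-local-building-integral-n2} at unramified places. The divisor-type factor is controlled as follows. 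The number of places $v\mid\mathfrak D_{E,R}$ is $O(\deg\mathfrak D_{E,R}/\log\deg\mathfrak D_{E,R})$, and $\prod(n_v+1)^A$ is a divisor function. Both are therefore $\ll_\varepsilon D(E,R)^{\varepsilon}$, by the function field analogue of $d(n)\ll n^\varepsilon$.

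\emph{Step 6: divide by the packet size.} Lemma~\ref{lem:packet-size-lower-bound} gives
\[
        \#\calp_{E,R}^{\,-2}\ll D(E,R)^{-1+2\varepsilon}=D_E^{-1+2\varepsilon}\Delta_R^{-1+2\varepsilon}.
\]
Multiplying by the bound of Step 5, the total is
\[
        \ll D_E^{-1/2+\varepsilon}\,\Delta_R^{-\eta'+\varepsilon}\le D(E,R)^{-\delta+\varepsilon},\qquad \delta=\min(1/2,\eta')/2.
\]
(Since $D(E,R)=D_E\Delta_R$, the product is at most $D(E,R)^{-\min(1/2,\eta')+\varepsilon}$, so any $\delta\le\min(1/2,\eta')$ works.)

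\emph{Main obstacle.} The delicate step is the uniformity in $v$ in Step 3. We need both the implied constant in (S) and the exponent $\eta$ to be independent of the place across the infinitely many unramified places dividing $\mathfrak D_{E,R}$. Otherwise the product $C^{\omega(\mathfrak D_{E,R})}$ must be absorbed into $D^\varepsilon$, which only works if $C$ is absolute. I would secure this by invoking the uniformity in $k$ asserted in Lemma~\ref{lem:elmv-local-building-integral-n2}, together with the fact that for spherical tempered $\pi_v$ the matrix coefficient is bounded exactly by $\Xi_v$.
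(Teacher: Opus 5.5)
Your proposal is correct and follows the paper's proof essentially step for step. In both, (S) and the volume comparisons give the split-place bound $(n_v+1)^A q_v^{a_v/2}q_v^{(1-\eta)b_v}$, and (D2) gives $q_v^{a_v/2}$ at division places; the product is then controlled by the divisor bound and divided by $\#\calp_{E,R}^{\,2}\gg D(E,R)^{1-2\varepsilon}$ from Lemma~\ref{lem:packet-size-lower-bound}. Your explicit treatment of uniformity in $v$, absorbing $C^{\omega(\mathfrak D_{E,R})}$ into $D(E,R)^\varepsilon$, is equivalent to the paper's device of enlarging $A$ so that the uniform constants are absorbed into $(n_v+1)^A$ whenever $n_v(E,R)\geq 1$.
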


\begin{proof}
For finite split places, Lemma~\ref{lem:split-local-matrix-coefficient-estimate}
gives, after increasing $A$ and decreasing $\eta>0$ once and for all,
\begin{equation}
\label{eq:split-alpha-local-bound}
        \alpha_v^\natural
        \bigl(f_{E,R,v},\overline f_{E,R,v};\chi_v\bigr)
        \ll_{f_v,K_{f,v}}
        (n_v(E,R)+1)^A
        \vol(\mathcal O_{E_v}^{\times}/\mathcal O_v^\times)
        q_v^{-\eta b_v(E,R)}.
\end{equation}
Here the constants are uniform in $v$: outside a fixed finite set the normalized
unramified factor is $1$ when $R_v=\mathcal O_{E_v}$.
By Lemma~\ref{lem:local-fixed-level-comparison} and the local order-volume formula,
\begin{equation}
\label{eq:toric-level-volume-comparison}
        \vol(K_{T,R,v})
        \asymp_{K_{f,v}}
        \vol(R_v^\times/\mathcal O_v^\times)
        \asymp_{K_{f,v}}
        \vol(\mathcal O_{E_v}^{\times}/\mathcal O_v^\times)
        q_v^{-b_v(E,R)/2}.
\end{equation}
Also
\begin{equation}
\label{eq:maximal-unit-volume-field-discriminant}
        \vol(\mathcal O_{E_v}^{\times}/\mathcal O_v^\times)
        \asymp_{F_v}
        q_v^{-a_v(E)/2}.
\end{equation}
Combining \eqref{eq:split-alpha-local-bound},
\eqref{eq:toric-level-volume-comparison}, and
\eqref{eq:maximal-unit-volume-field-discriminant}, we obtain
\begin{equation}
\label{eq:split-normalized-local-factor}
        \frac{
        \alpha_v^\natural
        \bigl(f_{E,R,v},\overline f_{E,R,v};\chi_v\bigr)
        }{
        \vol(K_{T,R,v})^2
        }
        \ll_{f_v,K_{f,v}}
        (n_v(E,R)+1)^A
        q_v^{a_v(E)/2}
        q_v^{(1-\eta)b_v(E,R)}.
\end{equation}

At finite division places, Lemma~\ref{lem:division-place-filtration-estimate} gives
$b_v(E,R)=0$ and
\begin{equation}
\label{eq:division-normalized-local-factor}
        \frac{
        \alpha_v^\natural
        \bigl(f_{E,R,v},\overline f_{E,R,v};\chi_v\bigr)
        }{
        \vol(K_{T,R,v})^2
        }
        \ll_{f_v,K_{f,v}}
        q_v^{a_v(E)/2}.
\end{equation}

Multiplying \eqref{eq:split-normalized-local-factor} over finite split places and
\eqref{eq:division-normalized-local-factor} over finite division places gives
\[
        \frac{
        \prod_{v\neq\infty}
        \alpha_v^\natural
        \bigl(f_{E,R,v},\overline f_{E,R,v};\chi_v\bigr)
        }{
        \vol(K_{T,R})^2
        }
        \ll_{f,U}
        \left(\prod_{v\neq\infty}q_v^{a_v(E)/2}\right)
        \left(\prod_{v\neq\infty}q_v^{(1-\eta)b_v(E,R)}\right)
        \prod_{v\neq\infty}(n_v(E,R)+1)^A.
\]
Recall
$\prod_{v\neq\infty}q_v^{a_v(E)}=D_E$ and $\prod_{v\neq\infty}q_v^{b_v(E,R)}=\Delta_R$. Since $n_v(E,R)$ is the coefficient of $v$ in the effective divisor
$\mathfrak D_{E,R}$, the bound for divisors on a fixed global function field
\cite[Lemma~5.8]{rosen-function fields}, gives, for every
$\varepsilon>0$,
\begin{equation}
\label{eq:divisor-factor-bound}
        \prod_{v\neq\infty}(n_v(E,R)+1)^A
        \ll_{F,A,\varepsilon}
        q^{\varepsilon\deg\mathfrak D_{E,R}}
        =
        D(E,R)^\varepsilon .
\end{equation}
we obtain
\begin{equation}
\label{eq:global-local-product-bound}
        \frac{
        \prod_{v\neq\infty}
        \alpha_v^\natural
        \bigl(f_{E,R,v},\overline f_{E,R,v};\chi_v\bigr)
        }{
        \vol(K_{T,R})^2
        }
        \ll_{f,U,\varepsilon}
        D_E^{1/2+\varepsilon}
        \Delta_R^{1-\eta+\varepsilon}.
\end{equation}

Combining \eqref{eq:global-local-product-bound} and the bound of Lemma~\ref{lem:packet-size-lower-bound}
(and renaming $\varepsilon$) gives
\begin{equation}
\label{eq:after-packet-normalization}
        \frac{
        \prod_{v\neq\infty}
        \alpha_v^\natural
        \bigl(f_{E,R,v},\overline f_{E,R,v};\chi_v\bigr)
        }{
        \vol(K_{T,R})^2\,\#\calp_{E,R}^{\,2}
        }
        \ll_{f,U,\varepsilon}
        D_E^{-1/2+\varepsilon}
        \Delta_R^{-\eta+\varepsilon}.
\end{equation}

Choose $0<\delta<\min\{1/2,\eta\}$.  Since
as $D_E^{-1/2}\Delta_R^{-\eta}
        \leq
        (D_E\Delta_R)^{-\delta}
        =
        D(E,R)^{-\delta}$
the stated estimate follows from \eqref{eq:after-packet-normalization}, after replacing $\varepsilon$ by a smaller positive number.
\end{proof}

\subsubsection{The function field Lindel\"of input}
\label{subsubsec:function field-lindelof}

The required central-value estimate follows from the Riemann hypothesis for
function field $L$-functions.

\begin{proposition}
\label{prop:function field-lindelof-input}
Let $\Pi$ be a fixed cuspidal automorphic representation of
$\GL_2(\mathbb A_F)$ with unitary central character.  Let $E/F$ vary through the quadratic extensions occurring in
the CM-packet construction, and let $\chi$ vary through the associated packet
characters.  Then, for every $\varepsilon>0$,
\[
        \left|
        L\!\left(\frac12,\Pi_E\otimes\chi\right)
        \right|
        \ll_{\Pi,F,\varepsilon}
        C(\Pi_E\otimes\chi)^\varepsilon .
\]
Moreover, for the CM data considered here, there exists a constant $A:=A(\Pi,F,U)>0$,  such that
\[
        C(\Pi_E\otimes\chi)
        \ll_{\Pi,F,U}
        D(E,R)^A .
\]
Consequently,
\[
        \left|
        L\!\left(\frac12,\Pi_E\otimes\chi\right)
        \right|
        \ll_{\Pi,F,U,\varepsilon}
        D(E,R)^\varepsilon .
\]
\end{proposition}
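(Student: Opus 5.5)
The plan is to treat $L(s,\Pi_E\otimes\chi)$ as a polynomial in $q^{-s}$ and read off the bound from the location of its zeros and from its degree. The three claims are handled in order.

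\emph{Step 1: rationality and degree.} Since $\Pi$ is cuspidal on $\GL_2(\mathbb A_F)$, the base change $\Pi_E$ is an automorphic representation of $\GL_2(\mathbb A_E)$ of the same type. I first check whether $\Pi_E\otimes\chi$ stays cuspidal. If $\Pi$ is dihedral with respect to $E$, the base change becomes Eisenstein and the $L$-function factors as $L(s,\xi\chi)L(s,\xi^\sigma\chi)$ for Hecke characters $\xi,\xi^\sigma$ of $E$. In either case, the theorem of Drinfeld--Lafforgue \cite{lafforgue} attaches to $\Pi_E\otimes\chi$ a pure $\ell$-adic local system of rank $2$ on an open subset of the curve $C_E$ attached to $E$. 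By Grothendieck's trace formula, $L(s,\Pi_E\otimes\chi)$ (with the $\infty$-factor included, or in the Hecke-character case after removing possible trivial poles) is a polynomial $P(T)$ in $T=q^{-s}$. Its degree $N$ is computed by the Grothendieck--Ogg--Shafarevich formula \cite{sga5}:
\[
N=4(g_E-1)+\deg\mathfrak f(\Pi_E\otimes\chi),
\]
which is $\log_q C(\Pi_E\otimes\chi)$ up to a bounded additive constant.

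\emph{Step 2: the Lindel\"of bound.} By Deligne's purity \cite{deligne-weilII}, in the unitary normalization every inverse root of $P$ has absolute value $q^{1/2}$. Writing $P(T)=\prod_{j=1}^N(1-\gamma_jT)$ and evaluating at $s=\tfrac12$ gives $|L(\tfrac12)|\le 2^N$. This alone yields only $C^{\log 2}$, which is not enough. The correct route is the Littlewood/Iwaniec--Kowalski argument \cite{iwaniec-kowalski}: expand $\log L(\tfrac12+\sigma)$ through the prime sum for large $\sigma$, where each prime-power coefficient is at most $2$ in size by Ramanujan (temperedness, also from Lafforgue). One then uses that $\log|L|$ on the critical line is controlled by a short Dirichlet polynomial of length $\approx N/\log N$ plus an error $O(N/\log N)$. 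In function-field language, write
\[
\log L(\tfrac12,\cdot)=\sum_{n\le M} c_nq^{-n/2}+O(N q^{-M/2}\cdot\ldots).
\]
Using $|c_n|\le 2q^{n}/n\cdot(1+o(1))$ by the prime polynomial theorem on $E$, and optimizing $M\approx 2\log_q N$, gives
\[
\log|L(\tfrac12)|\ll \frac{N}{\log N}.
\]
Hence $|L(\tfrac12)|\le q^{O(N/\log N)}=C^{o(1)}$, which is at most $C^\varepsilon$ for $C$ large. The implied constant depends only on $\Pi$, $F$ and $\varepsilon$, since the bounds on the coefficients depend only on the rank. The main obstacle is making this explicit-formula step rigorous and uniform; I would follow the standard RH argument, bounding $\sum_j\log|1-\gamma_jq^{-1/2}|$ via smoothing against a Fej\'er-type kernel.

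\emph{Step 3: conductor bound.} It remains to show $C(\Pi_E\otimes\chi)\ll D(E,R)^A$. First, $g_E-1$ is determined by Riemann--Hurwitz, $2g_E-2=2(2g-2)+\deg\mathfrak D_{E/F}$, so it is $O(\deg\mathfrak D_{E/F}+1)$. Second, for the local conductor exponents I use the standard bound
\[
a(\Pi_{E,w}\otimes\chi_w)\le 2a(\chi_w)+O_{\Pi}(1)+O(a_v(E)).
\]
The inflated packet character $\chi$ is trivial on $\widetilde K_{T,R}$, which contains $1+\mathfrak p_v^{m_v+c_v}\mathcal O_{E_v}$ by the proof of Lemma~\ref{lem:local-fixed-level-comparison}. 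Thus $a(\chi_w)\le c_v+m_v$, and $c_v$ is bounded by $b_v(E,R)$. The finitely many places where $m_v>0$ or $\Pi$ ramifies contribute $O(1)$ each. Summing over all places gives
\[
\log_q C\ll \deg\mathfrak D_{E,R}+1.
\]
Combining this with Step 2 yields the final bound $|L(\tfrac12,\Pi_E\otimes\chi)|\ll_{\Pi,F,U,\varepsilon}D(E,R)^\varepsilon$.
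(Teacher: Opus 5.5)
Your argument is correct in substance, but it works on the varying curve $C_E$, whereas the paper never leaves the fixed base $F$.

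\textbf{The paper's route.} It rewrites $L(s,\Pi_E\otimes\chi)=L(s,\Pi\times\operatorname{AI}_{E/F}\chi)$ via Lafforgue's automorphic induction and splits $\operatorname{AI}_{E/F}\chi$ isobarically. It then applies Lafforgue's RH for Rankin--Selberg $L$-functions and Grothendieck--Ogg--Shafarevich over $F$, and cites the Littlewood bound from Iwaniec--Kowalski. The conductor is bounded through the conductor--discriminant formula for $\operatorname{AI}_{E_v/F_v}\chi_v$ together with a Rankin--Selberg conductor bound.

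\textbf{Your route.} You restrict Lafforgue's parameter of $\Pi$ to $G_E$, twist by $\chi$, and use Weil~II purity and GOS on $C_E$, with Riemann--Hurwitz for $g_E$ and local twist/base-change conductor bounds. The two routes are equivalent: induction from $E$ to $F$ identifies the $L$-functions, and $4(g_E-1)+\deg\mathfrak f_E=4(2g_F-2)+\deg\mathfrak f_F$. Yours avoids automorphic induction and Rankin--Selberg conductor theory, and it actually carries out the Littlewood step $\log|L(\tfrac12)|\ll N/\log N$, which the paper only cites.

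Your display expressing $\log L(\tfrac12)$ as a truncated Dirichlet series is not literally meaningful, since the Euler product diverges at $\tfrac12$. The real argument is the one you mention last. Majorize $\log|1-e^{i\theta}|$ by a trigonometric polynomial of degree $M$ with mean $o(1)$ as $M\to\infty$ and $k$-th coefficient $O(1/k)$. Then use $\sum_j(\gamma_jq^{-1/2})^k=-kc_kq^{-k/2}$, which gives $\log|L(\tfrac12)|\ll N\cdot o(1)+q^{M/2}/M$.

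\textbf{The uniformity step needs repair.} Your justification of the coefficient bound does not work on $C_E$.
\begin{itemize}
\item The prime polynomial theorem on $E$ has error $O(g_Eq^{n/2})$. This is at least as large as $q^n$ for $n\le 2\log_q(2g_E)$, which is essentially the whole range $n\le M\approx 2\log_q N$ you use.
\item Fed into the sum, that error contributes $\gg g_E\log M$. This is worse than the trivial bound $N\log 2$.
\item So $|c_n|\le 2q^n/n\cdot(1+o(1))$ is not available uniformly in $E$ from that source.
\end{itemize}
What is true, and suffices, is
\[
n|c_n|\le 2\sum_{\deg w\mid n}\deg w\le 4\sum_{\deg v\mid n}\deg v\ll_F q^n,
\]
because the places $w$ of $E$ above a place $v$ of $F$ have total degree $2\deg v$. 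In other words, you must read the Euler product over the fixed field $F$, which the paper's route gives automatically.

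\textbf{A smaller slip.} From $1+\mathfrak p_v^{m_v+c_v}\mathcal O_{E_v}\subset\widetilde K_{T,R,v}$ you get $a(\chi_w)\le e(E_w/F_v)(c_v+m_v)$, not $c_v+m_v$. This only changes constants.
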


\begin{proof}
Let us recall the following standard notation. We write $\boxplus$ for the isobaric sum on general linear groups.  Thus, if
$\pi_i$ are automorphic representations of $\GL_{n_i}(\mathbb A_F)$, then
$\boxplus_i\pi_i$ denotes the isobaric automorphic representation of
$\GL_{\sum_i n_i}(\mathbb A_F)$ characterized, in particular, by the factorization
of standard and Rankin--Selberg $L$-functions:
\[
        L(s,\boxplus_i\pi_i)=\prod_i L(s,\pi_i),
        \qquad
        L(s,\rho\times\boxplus_i\pi_i)
        =
        \prod_i L(s,\rho\times\pi_i).
\]  For a unitary Hecke character
$\chi\colon E^\times\backslash\mathbb A_E^\times\rightarrow \mathbb C^\times$
let $\operatorname{AI}_{E/F}\chi$
denote its automorphic induction from $E$ to $F$.  Thus
$\operatorname{AI}_{E/F}\chi$ is the isobaric automorphic representation of
$\GL_2(\mathbb A_F)$ characterized by
\[
        \bigl(\operatorname{AI}_{E/F}\chi\bigr)_E
        \simeq
        \chi\boxplus\chi^\sigma ,
\]
where $\sigma$ is the nontrivial element of $\operatorname{Gal}(E/F)$.   It is cuspidal unless
$\chi$ is Galois-invariant, in which case it is an isobaric sum of two Hecke
characters of $F$.  This is the automorphic induction supplied by Lafforgue's
functoriality theorem \cite[Thm.~VII.1]{lafforgue}.

By compatibility of automorphic induction with quadratic base change and
Rankin--Selberg $L$-functions, again as in
\cite[Thm.~VII.1]{lafforgue}, one has
\begin{equation}
\label{eq:AI-rankin-identity}
        L(s,\Pi_E\otimes\chi)
        =
        L\bigl(s,\Pi\times\operatorname{AI}_{E/F}\chi\bigr).
\end{equation}
Write
\[
        \operatorname{AI}_{E/F}\chi
        =
        \boxplus_j \Sigma_j
\]
for its isobaric decomposition into unitary cuspidal automorphic representations
$\Sigma_j$ of some $\GL_{n_j}(\mathbb A_F)$, with $n_j\in\{1,2\}$ and
$\sum_j n_j=2$.  Then
\[
        L\bigl(s,\Pi\times\operatorname{AI}_{E/F}\chi\bigr)
        =
        \prod_j L(s,\Pi\times\Sigma_j).
\]
For each factor, Lafforgue's Riemann hypothesis for Rankin--Selberg $L$-functions
over global function fields  \cite[Thm.~VI.10(ii)]{lafforgue} applies.  Hence
the completed $L$-function in \eqref{eq:AI-rankin-identity} satisfies RH after
unitary normalization. Moreover, via Lafforgue's automorphic-to-Galois parametrization \cite[Thm.~VI.9(i)]{lafforgue}, the
Grothendieck--Ogg--Shafarevich formula
\cite[Exp.~X, \S7]{sga5} gives
\[
N\bigl(\Pi\times\operatorname{AI}_{E/F}\chi\bigr) \ll_{\Pi,F}
1+\log_q C\bigl(\Pi\times\operatorname{AI}_{E/F}\chi\bigr),
\]
where $N(\cdot)$ denotes the degree of the corresponding completed
$L$-polynomial.  The standard Littlewood bound under RH, in the form of
\cite[Thms.~5.17 and~5.19]{iwaniec-kowalski} (which extend almost verbatim to function fields), gives, for every
$\varepsilon>0$,
\[
        \left|
        L\!\left(\frac12,\Pi_E\otimes\chi\right)
        \right|
        \ll_{\Pi,F,\varepsilon}
        C\bigl(\Pi\times\operatorname{AI}_{E/F}\chi\bigr)^\varepsilon .
\]
We write this conductor as $C(\Pi_E\otimes\chi)$, using
\eqref{eq:AI-rankin-identity}.  This proves the first asserted estimate.

It remains to compare this conductor with the CM discriminant.  For a finite place
$v\neq\infty$, write
\[
        R_v=\mathcal O_v+\mathfrak p_v^{c_v}\mathcal O_{E_v}.
\]
Since $\chi_v$ is trivial on the inverse image of $K_{T,R,v}$ in $E_v^\times$,
Lemma~\ref{lem:local-fixed-level-comparison} gives integers
$m_v=m_v(K_{f,v})\geq 0$, equal to $0$ for all but finitely many $v$, such that
\[
        a_{E_v}(\chi_v)\ll_F c_v+m_v .
\]
Here $a_{E_v}(\chi_v)$ is the usual $E_v$-normalized conductor exponent; the
implicit constant only accounts for the ramification index
$e(E_v/F_v)\leq 2$.  In the split case
$E_v\simeq F_v\times F_v$, the same assertion is interpreted componentwise.
By the conductor-discriminant formula for local induction
\cite[Ch.~VI, \S2, Corollary to Proposition~4]{serre-local-fields}, applied to
$\operatorname{AI}_{E_v/F_v}\chi_v$, we obtain
\[
        a_v\bigl(\operatorname{AI}_{E/F}\chi\bigr)
        \ll_F
        a_v(E)+c_v+m_v .
\]
Summing over $v\neq\infty$, and using that the $m_v$'s are supported on the
fixed finite set of level places, gives
\[
        C\bigl(\operatorname{AI}_{E/F}\chi\bigr)
        \ll_{F,U}
        D_E^{A_1}\Delta_R^{A_1}
        =
        D(E,R)^{A_1}
\]
for some constant $A_1=A_1(F,U)>0$.

Finally, since $\Pi$ is fixed, the Rankin--Selberg conductor satisfies
\[
        C\bigl(\Pi\times\operatorname{AI}_{E/F}\chi\bigr)
        \ll_{\Pi,F}
        C\bigl(\operatorname{AI}_{E/F}\chi\bigr)^{A_2}
\]
for some $A_2=A_2(\Pi,F)>0$.  Therefore
$ C(\Pi_E\otimes\chi)=C\bigl(\Pi\times\operatorname{AI}_{E/F}\chi\bigr)
\ll_{\Pi,F,U} D(E,R)^A$
for some $A=A(\Pi,F,U)>0$.  Substituting this into the preceding
Littlewood bound, and replacing $\varepsilon$ by $\varepsilon/A$, we conclude.
\end{proof}

\begin{remark}
\label{rem:function field-vs-char-zero}{\em
This is an important point at which the function field argument differs from the classical  strategy.  In characteristic zero one needs a genuine subconvex
estimate for the Rankin--Selberg $L$-function appearing after Waldspurger.  Over
global function fields, the required Lindel\"of-strength estimate follows from the
Riemann hypothesis for the relevant $L$-functions.}
\end{remark}
\subsubsection{Cuspidal Weyl sum decay}
\label{subsubsec:cuspidal-weyl-sum-decay}

\begin{lemma}
\label{l:cuspidal-weyl-sum-decay}
Let $f\in\pi^U$, where $\pi$ is a cuspidal automorphic representation of
$D^\times(\mathbb A_F)$ with trivial central character.  Then, for every packet
character $\chi$,
\[
        W(f;E,R,\chi)\rightarrow 0
\qquad
\text{as}
\qquad
        \deg\mathfrak D_{E,R}\rightarrow\infty .
\]
\end{lemma}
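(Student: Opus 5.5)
The plan is to combine the three ingredients already assembled: the Weyl--Waldspurger identity \eqref{eq:weyl-waldspurger}, the local normalization Proposition~\ref{prop:local-waldspurger-factor-estimate}, and the Lindel\"of input Proposition~\ref{prop:function field-lindelof-input}.

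\emph{Reduction to pure tensors.} First I reduce to the case where $f=\otimes'_v f_v$ is a decomposable vector. The space $\pi^U$ is finite-dimensional, since $U$ contains the open compact $K_f$ at finite places and $T_\star(F_\infty)$ at $\infty$, and $\pi_\infty^{T_\star(F_\infty)}$ is finite-dimensional by admissibility. It is therefore spanned by finitely many pure tensors $f^{(1)},\dots,f^{(r)}\in\pi^U$. Since $W(\cdot;E,R,\chi)$ is linear in the test vector, it suffices to treat each $f^{(i)}$.

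\emph{Estimate for a pure tensor.} For such an $f$, let $\Pi$ be the cuspidal representation of $\GL_2(\mathbb A_F)$ corresponding to $\pi$; in the division case this is the Jacquet--Langlands transfer, and $\Pi$ is cuspidal because $\pi$ is infinite-dimensional, being cuspidal and not one-dimensional. I then insert \eqref{eq:weyl-waldspurger}. The factor $w_{E,R}^2\le (q+1)^2$ is bounded by the lemma on $w_{E,R}$. The global constants $L(2,1_F)$ and $L(1,\Pi,\operatorname{Ad})^{-1}$ are fixed and nonzero, since $L(1,\Pi,\operatorname{Ad})\neq0$ for cuspidal $\Pi$. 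Next I write $\vol(K_T)=\vol(T_E(F_\infty))\vol(K_{T,R})$. The local factor $\alpha_\infty^\natural$ divided by $\vol(T_E(F_\infty))^2$ is bounded independently of $(E,R)$ and $\chi$. This holds because $E_\infty\simeq F_{\infty,\star}$ is fixed, $z_\infty^{-1}T_E(F_\infty)z_\infty=T_\star(F_\infty)$, and $f_{E,R,\infty}=\pi_\infty(z_\infty)f_\infty$, so after conjugating by $z_\infty$ the integral runs over the fixed compact torus $T_\star(F_\infty)$ against the fixed vector $f_\infty$. It is therefore bounded by $\vol(T_\star(F_\infty))\|f_\infty\|^2$ times fixed normalizing $L$-factors at $\infty$; since $F_{\infty,\star}$ is fixed, $L(1/2,\Pi_{E,\infty}\otimes\chi_\infty)$ ranges over finitely many possibilities, as $\chi_\infty$ is trivial on $E_\infty^\times$. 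The finite-place product divided by $\vol(K_{T,R})^2\#\calp_{E,R}^2$ is $\ll D(E,R)^{-\delta+\varepsilon}$ by Proposition~\ref{prop:local-waldspurger-factor-estimate}. Finally, the central value is $\ll D(E,R)^{\varepsilon}$ by Proposition~\ref{prop:function field-lindelof-input}. Altogether this gives
\[
|W(f;E,R,\chi)|^2\ll_{f,F,U,\varepsilon} D(E,R)^{-\delta+2\varepsilon}.
\]
Choosing $\varepsilon<\delta/4$, the right-hand side tends to $0$ as $\deg\mathfrak D_{E,R}\to\infty$, uniformly in $\chi$.

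\emph{Expected obstacles.} The main point to verify is the place $\infty$ together with the consistency of measure normalizations: the Haar measure on $T_E(\mathbb A_F)$ used in \eqref{eq:compare-weyl-period} must be the product of local measures used in Qiu's formula, with the same local measure at $\infty$ in both. I would fix the measure on $T_E(F_\infty)$ by transport from $T_\star(F_\infty)$ via $z_\infty$, which makes the $\infty$-contribution literally independent of $(E,R)$. A secondary check is that Proposition~\ref{prop:local-waldspurger-factor-estimate} applies to the translated vector $f_{E,R}$, which is how it is stated, and that the finitely many bad places where $\alpha_v^\natural\neq1$ are absorbed in its implied constant.
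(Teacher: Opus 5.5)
Your proposal is correct and follows the paper's proof essentially step by step. Both arguments reduce to decomposable vectors, combine the period comparison \eqref{eq:compare-weyl-period} with Qiu's formula, absorb the global constants and the fixed place $\infty$, and conclude from Proposition~\ref{prop:local-waldspurger-factor-estimate} together with the Lindel\"of bound.

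One small slip in your reduction step: the finite-dimensionality of $\pi_\infty^{T_\star(F_\infty)}$ does not follow from admissibility, since $T_\star(F_\infty)$ is compact but not open. You do not need it, however. Because $\pi^U=\bigotimes'_v\pi_v^{U_v}$, every $f\in\pi^U$ is already a finite sum of $U$-fixed pure tensors, and this is exactly how the paper reduces.
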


\begin{proof}
It is enough to prove the estimate for decomposable vectors
$f=\otimes_v' f_v\in\pi^U$.
Indeed, for the fixed factorizable level $U=\prod_v U_v$, one has
$\pi^U=\bigotimes_v'\pi_v^{U_v}$.
Thus every $f\in\pi^U$ is a finite linear combination of decomposable $U$-fixed
vectors.  Since the Weyl sum is linear in $f$, the general case follows termwise.
We therefore assume that $f$ is decomposable.  Since the Weyl sum is
linear in $f$, the general case follows termwise.  We therefore assume that $f$
is decomposable.
By \eqref{eq:compare-weyl-period},
\[
        P_\chi^D(f_{E,R})
        =
        \frac{\vol(K_T)}{w_{E,R}}\,
        \#\calp_{E,R}\,
        W(f;E,R,\chi).
\]
Applying Waldspurger's formula \eqref{eq:waldspurger-qiu} to $f_{E,R}$ gives
\[
                |W(f;E,R,\chi)|^2
        =
        \frac{w_{E,R}^{\,2}}
        {\vol(K_T)^2\,\#\calp_{E,R}^{\,2}}
        \cdot
        \frac{
        L(2,1_F)\,
        L\!\left(\frac12,\Pi_E\otimes\chi\right)
        }{
        2\,L(1,\Pi,\operatorname{Ad})
        }
        \prod_v
        \alpha_v^\natural
        \bigl(f_{E,R,v},\overline f_{E,R,v};\chi_v\bigr).
\]
The factors $L(2,1_F)$ and $L(1,\Pi,\operatorname{Ad})$ are fixed and nonzero.
By Proposition~\ref{prop:function field-lindelof-input},
\[
        L\!\left(\frac12,\Pi_E\otimes\chi\right)
        \ll_{f,\varepsilon}
        D(E,R)^\varepsilon.
\]
At the distinguished place $\infty$, the extension
$E_\infty\simeq F_{\infty,\star}$
and the subgroup
$U_\infty=T_\star(F_\infty)$
are fixed. At the distinguished place $\infty$, we have
\[
        g_{E,R,\infty}=z_\infty,
        \qquad
        U_\infty=T_\star(F_\infty),
        \qquad
        z_\infty^{-1}T_E(F_\infty)z_\infty=T_\star(F_\infty).
\]
Hence $T_E(F_\infty)\cap g_{E,R,\infty}U_\infty g_{E,R,\infty}^{-1}
=T_E(F_\infty)$,
whose volume is fixed.  Moreover $\chi_\infty=1$, and after conjugating by
$z_\infty$ the local toric integral defining
$\alpha_\infty^\natural(f_{E,R,\infty},\overline f_{E,R,\infty};\chi_\infty)$
is an integral over the fixed compact torus $T_\star(F_\infty)$ against the fixed
$T_\star(F_\infty)$-invariant vector $f_\infty$.  Thus the infinite local factor and the
infinite toric volume are fixed nonzero constants, depending only on $f$ and $U$.
Therefore Proposition~\ref{prop:local-waldspurger-factor-estimate} gives
\[
        \frac{
        \prod_v
        \alpha_v^\natural
        \bigl(f_{E,R,v},\overline f_{E,R,v};\chi_v\bigr)
        }{
        \vol(K_T)^2\,\#\calp_{E,R}^{\,2}
        }
        \ll_{f,\varepsilon}
        D(E,R)^{-\delta+\varepsilon}.
\]
Hence
$|W(f;E,R,\chi)|^2
        \ll_{f,\varepsilon}
        D(E,R)^{-\delta+\varepsilon}$.
\end{proof}

\subsubsection{The one-dimensional spectrum}
\label{subsubsec:reduced-norm-component-characters}

The cuspidal estimate above does not treat the one-dimensional automorphic spectrum.
This remaining part is finite and is controlled by the reduced norm.

Let
\[
        \widetilde U\subset D^\times(\mathbb A_F)
\]
be the inverse image of
$\subset G(\mathbb A_F)=D^\times(\mathbb A_F)/\mathbb A_F^\times$.
Define
\[
        \mathfrak X_U
        :=
        \left\{
        \eta\colon F^\times\backslash\mathbb A_F^\times\to\{\pm1\}
        \;:\;
        \eta\bigl(\nrd(\widetilde U)\bigr)=1
        \right\}.
\]
Thus $\mathfrak X_U$ is the finite group of quadratic Hecke characters of $F$
which are trivial on the reduced norms of the level subgroup $U$.  

For $\eta\in\mathfrak X_U$, define
\[
        \xi_\eta\colon G(F)\backslash G(\mathbb A_F)\rightarrow \mathbb C^\times
\qquad
g\mapsto \eta\bigl(\nrd(\widetilde g)\bigr)
\]
where $\widetilde g\in D^\times(\mathbb A_F)$ is any lift of $g$.  This is
well-defined, since replacing $\widetilde g$ by $a\widetilde g$, with
$a\in\mathbb A_F^\times$, changes the reduced norm by $a^2$, and $\eta$ is
quadratic.

\begin{lemma}
\label{lem:one-dimensional-spectrum}
The map
$\eta\mapsto \xi_\eta$
identifies $\mathfrak X_U$ with the group of right $U$-invariant
one-dimensional automorphic representations occurring in $L^2(X_U)$.
\end{lemma}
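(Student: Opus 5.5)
We split the identification into two directions: first that every $\xi_\eta$ is a right $U$-invariant one-dimensional automorphic character on $X_U$, then that every one-dimensional automorphic representation of $G(\mathbb A_F)$ occurring in $L^2(X_U)$ is of this form. Injectivity of $\eta\mapsto\xi_\eta$ is checked at the end.

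\emph{Well-definedness and invariance.} The function $\xi_\eta$ is well defined on $G(\mathbb A_F)$, as noted before the statement, because $\eta$ is quadratic. It is left $G(F)$-invariant, since $\nrd(D^\times(F))\subset F^\times$ and $\eta$ is trivial on $F^\times$. It is right $U$-invariant precisely because $\eta(\nrd(\widetilde U))=1$. Since $\nrd$ is multiplicative, $\xi_\eta(gh)=\xi_\eta(g)\xi_\eta(h)$. Therefore $\xi_\eta$ spans a one-dimensional $G(\mathbb A_F)$-stable subspace of $L^2([G])$ whose vector is $U$-fixed, and it is bounded, hence lies in $L^2(X_U)$ since $X_U$ has finite volume.

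\emph{Exhaustion.} Let $\xi$ be a one-dimensional automorphic representation, i.e.\ a continuous character of $G(\mathbb A_F)$ trivial on $G(F)$, and assume it is trivial on $U$. The key step is that any such character factors through the reduced norm. Locally, the derived subgroup of $D_v^\times$ is $SL_1(D_v)$: for split $v$ this is $SL_2(F_v)$; for division $v$ it holds by the classical result that the commutator subgroup of $D_v^\times$ equals the norm-one group. Moreover $\nrd\colon D_v^\times\to F_v^\times$ is surjective for quaternion algebras over local fields. Hence $\xi$ lifted to $D^\times(\mathbb A_F)$ equals $\theta\circ\nrd$ for a character $\theta$ of $\mathbb A_F^\times$.

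Globally, $\theta$ is trivial on $\nrd(D^\times(F))$. By Hasse--Schilling--Maass norm theorem this set is the set of elements of $F^\times$ positive at the real places. In the function field case there are no real places, so $\nrd(D^\times(F))=F^\times$, and $\theta$ is a Hecke character. Triviality on the center $\mathbb A_F^\times$ gives $\theta(a^2)=1$, so $\theta$ is quadratic. Triviality on $U$ gives $\theta(\nrd(\widetilde U))=1$. Thus $\theta\in\mathfrak X_U$ and $\xi=\xi_\theta$. If $D$ is division, strong approximation for $SL_1(D)$ is not needed, because the argument is purely local-to-global via the norm.

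\emph{Injectivity and finiteness.} If $\xi_\eta=1$, then $\eta$ is trivial on $\nrd(D^\times(\mathbb A_F))=\mathbb A_F^\times$, using local surjectivity of $\nrd$ and the fact that almost all local units are norms. Hence $\eta=1$. Finiteness of $\mathfrak X_U$ follows because $\nrd(\widetilde U)\supset F_\infty^{\times 2}\cdot(\text{an open subgroup of }\widehat A^{\times})$ together with the finiteness of the relevant ray class group; more precisely, $\nrd(\widetilde T_\star(F_\infty))=N(F_{\infty,\star}^\times)$ has finite index in $F_\infty^\times$.

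The step I expect to be the main obstacle is the local factorization through $\nrd$ at the division places and in residue characteristic $2$. I would handle it by citing the equality $[D_v^\times,D_v^\times]=SL_1(D_v)$, which is Riehm's theorem or Nakayama--Matsushima and holds for all local fields, rather than attempting a direct computation.
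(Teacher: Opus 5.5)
Your proof is correct, but it reaches the factorization through $\nrd$ by a different route from the paper.

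\textbf{The paper's route.} It pulls $\xi$ back to $D^\times(\mathbb A_F)$ and invokes strong approximation for $\operatorname{SL}_1(D)$ to conclude that the character factors through $\nrd$. It then reads off quadraticity from the center and membership in $\mathfrak X_U$ from $U$-invariance.

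\textbf{Your route.} You work place by place. At each place, $\xi_v$ is trivial on $[D_v^\times,D_v^\times]=\operatorname{SL}_1(D_v)$. At division places this is Wang's theorem for squarefree index, which holds over any field, so residue characteristic $2$ is harmless. Local surjectivity of $\nrd$ then gives $\xi=\theta\circ\nrd$. Finally, the Hasse--Schilling--Maass norm theorem gives $\nrd(D^\times(F))=F^\times$ (there are no real places), so $\theta$ is a Hecke character.

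\textbf{What each route buys.} Yours avoids strong approximation entirely. It also makes explicit two points the paper's proof leaves implicit:
\begin{enumerate}
\item That $\eta$ is trivial on $F^\times$. From $\xi$ trivial on $D^\times(F)$ one only gets triviality on $\nrd(D^\times(F))$, so the paper's phrase ``for a Hecke character'' also silently needs the norm theorem.
\item Injectivity of $\eta\mapsto\xi_\eta$, which follows from surjectivity of the adelic reduced norm.
\end{enumerate}
Conversely, strong approximation by itself only gives density of $\operatorname{SL}_1(D)(F)\operatorname{SL}_1(D)(F_\infty)$ in $\operatorname{SL}_1(D)(\mathbb A_F)$. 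So the paper's argument tacitly needs the character to be trivial on $\mathrm{SL}_2(F_\infty)$, i.e.\ perfectness of $\mathrm{SL}_2$ at the split place $\infty$. That is exactly one instance of your local input; the paper's version needs this local fact only at one split place, while yours needs it everywhere, including the division places.

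\textbf{One step to write out.} The passage from local to adelic factorization deserves a sentence. Continuity of $\xi$, together with density of finite products of the $\operatorname{SL}_1(D_v)$ in the restricted product, gives triviality on $\operatorname{SL}_1(D)(\mathbb A_F)$. Openness of $\nrd$ and $\nrd(\mathcal O_{D,v}^\times)=\mathcal O_v^\times$ for almost all $v$ then make $\theta$ a continuous character of $\mathbb A_F^\times$.

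Your closing finiteness remark is correct but not needed for the lemma.
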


\begin{proof}
The map $\eta\mapsto \xi_\eta$ is well defined: $\xi_\eta$ is right $U$-invariant
because $\eta$ is trivial on $\nrd(\widetilde U)$.

Conversely, let $\xi$ be a one-dimensional automorphic character of
$G(\mathbb A_F)$, trivial on $G(F)$ and right $U$-invariant.  Pulling $\xi$
back to $D^\times(\mathbb A_F)$, we obtain a character trivial on
$D^\times(F)$, on $\mathbb A_F^\times$, and on $\widetilde U$.

By the strong approximation for
$\operatorname{SL}_1(D)$, this character factors through the reduced norm
$\nrd\colon D^\times(\mathbb A_F)\to\mathbb A_F^\times$; see
\cite[Main Theorem~28.5.3]{voight-quaternion-algebras}.
Hence $\xi=\eta\circ\nrd$ for a Hecke character $\eta$ of $F$.
Since $\xi$ descends to $D^\times/F^\times$, it is trivial on scalar adeles,
so $\eta(a^2)=1$ for all $a\in\mathbb A_F^\times$.  Thus $\eta$ is quadratic.
Finally, right $U$-invariance gives $\eta(\nrd(\widetilde U))=1$, hence
$\eta\in\mathfrak X_U$.
\end{proof}

Now let $(E,R)$ be an admissible CM datum, and let
\[
        \eta_E\colon F^\times\backslash\mathbb A_F^\times\to\{\pm1\}
\]
be the quadratic character attached to $E/F$.  Recall that
$g_{E,R}:=z(1,g_{R,f})\in G(\mathbb A_F)$.
Let $\nrd(g_{E,R})$ denote the reduced norm of any lift of $g_{E,R}$ to
$D^\times(\mathbb A_F)$.

\begin{lemma}
\label{lem:component-character-averages}
For $\eta\in\mathfrak X_U$, one has
\[
        W(\xi_\eta;E,R)
        =
        \begin{cases}
        1, & \eta=1,\\[4pt]
        \eta_E\bigl(\nrd(g_{E,R})\bigr), & \eta=\eta_E,\\[4pt]
        0, & \eta\neq 1,\eta_E.
        \end{cases}
\]

\end{lemma}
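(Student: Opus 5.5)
We write the Weyl sum as an average over the packet and reduce it to a character sum on $\calp_{E,R}$. By definition,
\[
        W(\xi_\eta;E,R)=\frac{1}{\#\calp_{E,R}}\sum_{[t_f]\in\calp_{E,R}}\xi_\eta\bigl(z(1,t_f)(1,g_{R,f})\bigr).
\]
Since $\xi_\eta$ is a character of $G(\mathbb A_F)$ factoring through the reduced norm, it is multiplicative. We lift $t_f\in T_E(\mathbb A_{F,f})$ to $\tilde t\in\mathbb A_{E,f}^\times$ and use $\nrd(\iota(\tilde t))=N_{E/F}(\tilde t)$. This gives
\[
        \xi_\eta\bigl(z(1,t_f)(1,g_{R,f})\bigr)=\eta\bigl(\nrd(g_{E,R})\bigr)\cdot\eta\bigl(N_{E/F}(\tilde t)\bigr).
\]
Here we use that $z$ and $(1,t_f)$ commute only up to ordering inside $\nrd$, which is harmless because $\nrd$ lands in the abelian group $\mathbb A_F^\times$. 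The map $\psi_\eta:=\eta\circ N_{E/F}$ is therefore a character of $\mathbb A_{E,f}^\times$. It is trivial on $\mathbb A_{F,f}^\times$, since $N(a)=a^2$ and $\eta$ is quadratic, and it is trivial on $E^\times$, since $N(E^\times)\subset F^\times$. It is also trivial on $\widetilde K_{T,R}$: this group embeds, up to $\mathbb A_{F,f}^\times$, into $\widetilde U$ after conjugation by $g_{R,f}$, and $\eta$ kills $\nrd(\widetilde U)$, which is conjugation invariant. Hence $\psi_\eta$ is a character of the finite abelian group $\calp_{E,R}$, and
\[
        W(\xi_\eta;E,R)=\eta\bigl(\nrd(g_{E,R})\bigr)\cdot\frac{1}{\#\calp_{E,R}}\sum_{[t]}\psi_\eta([t]).
\]

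By orthogonality, the average equals $1$ if $\psi_\eta$ is trivial on $\calp_{E,R}$ and $0$ otherwise. We must therefore decide when $\eta\circ N_{E/F}$ is trivial on $E^\times\backslash\mathbb A_{E,f}^\times$. The infinite place must also be tracked here. We would extend the argument to the full $\mathbb A_E^\times$: the point $z_\infty$ contributes $\eta_\infty(\nrd z_\infty)$, and $T_E(F_\infty)$ lies in $g_{E,R}Ug_{E,R}^{-1}$. Hence $\eta_\infty\circ N$ is trivial on $E_\infty^\times$, and $\psi_\eta$ can be regarded as a Hecke character of $E^\times\backslash\mathbb A_E^\times$.

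By global class field theory, $\eta\circ N_{E/F}$ is trivial on $E^\times\backslash\mathbb A_E^\times$ exactly when $\eta$ is trivial on $F^\times N_{E/F}(\mathbb A_E^\times)$, which is the kernel of $\eta_E$. Since $\eta$ is quadratic, this happens iff $\eta\in\{1,\eta_E\}$. For $\eta\neq1,\eta_E$ the restricted character is therefore nontrivial on the idele class group. It factors through $\calp_{E,R}$, by the triviality on $E_\infty^\times$ and $\widetilde K_{T,R}$ shown above, so it is a nontrivial character of the finite group and the sum vanishes.

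For $\eta=1$ the value is clearly $1$. For $\eta=\eta_E$ the average is $1$, and the prefactor is $\eta_E(\nrd(g_{E,R}))$. This requires the convention that $\nrd(g_{E,R})$ includes the $\infty$-component $\nrd(z_\infty)$, consistent with the definition $g_{E,R}=z(1,g_{R,f})$.

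The main obstacle is the bookkeeping showing that $\psi_\eta$ genuinely descends to $\calp_{E,R}$, i.e.\ is trivial on $E_\infty^\times$ and on $\widetilde K_{T,R}$. For $E_\infty^\times$, we would show $\eta_\infty(N(E_\infty^\times))=\eta_\infty(\nrd(z_\infty T_\star z_\infty^{-1}))=1$ using $\eta(\nrd\widetilde U)=1$, since $T_\star(F_\infty)\subset U$. The second step is verifying that the class-field-theoretic criterion, triviality on all of $E^\times\backslash\mathbb A_E^\times$, applies once $\psi_\eta$ is known to be trivial at $\infty$.
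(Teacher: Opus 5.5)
Your proposal is correct and follows the paper's proof essentially step by step: you factor $\xi_\eta(x_{E,R}(t_f))$ as $\eta(\nrd(g_{E,R}))\,\eta(N_{E/F}(t_f))$, check that $\eta\circ N_{E/F}$ descends to a character of $\calp_{E,R}$ (using $\eta(\nrd(\widetilde U))=1$ both for $K_{T,R}$ and for $T_E(F_\infty)$), apply orthogonality, and conclude by class field theory that this character is trivial exactly when $\eta\in\{1,\eta_E\}$. Your explicit tracking of the $\infty$-component is, if anything, slightly more careful than the paper's one-line justification of triviality on $T_E(F)$: the finite-adelic restriction of $\eta\circ N_{E/F}$ kills $E^\times$ only once one knows $\eta_\infty\circ N_{E_\infty/F_\infty}=1$.
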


\begin{proof}
For $x_{E,R}(t_f)=G(F)\,z(1,t_f)(1,g_{R,f})\,U$
we have $\xi_\eta(x_{E,R}(t_f))
        =
        \eta\bigl(\nrd(g_{E,R})\bigr)\,
        \eta\bigl(N_{E/F}(t_f)\bigr)$.
Therefore
\[
        W(\xi_\eta;E,R)
        =
        \eta\bigl(\nrd(g_{E,R})\bigr)
        \frac{1}{\#\calp_{E,R}}
        \sum_{[t_f]\in\calp_{E,R}}
        \eta\bigl(N_{E/F}(t_f)\bigr).
\]

The function $[t_f]\mapsto \eta\bigl(N_{E/F}(t_f)\bigr)$
is a character of the finite abelian group $\calp_{E,R} =
T_E(F)\backslash T_E(\mathbb A_{F,f})/K_{T,R}$.
Indeed, it is trivial on $T_E(F)$, because $\eta$ is a Hecke character of $F$;
it is trivial on $K_{T,R}$, because $K_{T,R}\subset g_{R,f}K_fg_{R,f}^{-1}$ and
$\eta$ is trivial on $\nrd(\widetilde K_f)$.

The average of a character of a finite abelian group is $0$, unless the character
is trivial, in which case the average is $1$.  Thus the displayed average is
nonzero precisely when $\eta\circ N_{E/F}$
is trivial on $T_E(\mathbb A_{F,f})$.  Since $\eta\in\mathfrak X_U$ and
$T_E(F_\infty)$ is conjugate to $T_\star(F_\infty)$, the same character is also
trivial on $T_E(F_\infty)$.  Hence the induced base-change character on
$E^\times\backslash\mathbb A_E^\times$
is trivial.

By global class field theory, the base-change character
$\eta\circ N_{E/F}$
is trivial if and only if the quadratic extension attached to $\eta$ becomes split
over $E$.  Since $E/F$ is quadratic, this happens exactly for
$\eta=1$ or $\eta=\eta_E$.
\end{proof}

\begin{corollary}
\label{cor:component-character-decay}
Let $(E_n,R_n)$ be a sequence of admissible CM data.  Then
\[
        W(\xi_\eta;E_n,R_n)\rightarrow 0
\]
for every nontrivial $\eta\in\mathfrak X_U$ if and only if
$\eta_{E_n}\notin \mathfrak X_U-\{1\}$
for all sufficiently large $n$.
\end{corollary}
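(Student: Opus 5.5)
The plan is to read everything off Lemma~\ref{lem:component-character-averages}, using that for each fixed nontrivial $\eta\in\mathfrak X_U$ the Weyl sum $W(\xi_\eta;E_n,R_n)$ takes only values of modulus $0$ or $1$. Convergence to $0$ is therefore equivalent to the sum being exactly $0$ for all large $n$.

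\emph{Preliminary observation.} Since each $E_n/F$ is a separable quadratic field extension, $\eta_{E_n}$ is a nontrivial quadratic character. So the three cases of Lemma~\ref{lem:component-character-averages} do not overlap for $\eta\neq1$. For nontrivial $\eta\in\mathfrak X_U$ the lemma gives
\[
W(\xi_\eta;E_n,R_n)=
\begin{cases}
\eta_{E_n}\bigl(\nrd(g_{E_n,R_n})\bigr)\in\{\pm1\}, & \eta=\eta_{E_n},\\
0, & \eta\neq\eta_{E_n}.
\end{cases}
\]
The value in the first case is $\pm1$ because $\eta_{E_n}$ is $\{\pm1\}$-valued.

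\emph{Sufficiency.} Suppose $\eta_{E_n}\notin\mathfrak X_U-\{1\}$ for all $n\ge n_0$. Then for every nontrivial $\eta\in\mathfrak X_U$ and every $n\ge n_0$ we have $\eta\neq\eta_{E_n}$. Hence $W(\xi_\eta;E_n,R_n)=0$ for $n\ge n_0$, and in particular the sums tend to $0$.

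\emph{Necessity.} Suppose instead that $\eta_{E_n}\in\mathfrak X_U-\{1\}$ for infinitely many $n$. Since $\mathfrak X_U$ is a finite group, the pigeonhole principle gives a single nontrivial $\eta\in\mathfrak X_U$ with $\eta=\eta_{E_n}$ along an infinite subsequence. Along that subsequence $|W(\xi_\eta;E_n,R_n)|=1$, so $W(\xi_\eta;E_n,R_n)\not\to0$. This contradicts the hypothesis.

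\emph{Main obstacle.} There is no real obstacle here. The only points to check are that $\eta_{E_n}\neq1$, so the case split is unambiguous, and that $\mathfrak X_U$ is finite, which is what makes the pigeonhole step work. The finiteness of $\mathfrak X_U$ was already noted when it was defined: it consists of quadratic characters trivial on the open subgroup $\nrd(\widetilde U)$.
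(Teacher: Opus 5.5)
Your proposal is correct and follows the paper's argument. Both read off from Lemma~\ref{lem:component-character-averages} that, for nontrivial $\eta\in\mathfrak X_U$, $W(\xi_\eta;E_n,R_n)$ vanishes unless $\eta=\eta_{E_n}$, when it has modulus $1$, and both use the finiteness of $\mathfrak X_U$ to pass from ``each $\eta$ equals $\eta_{E_n}$ only finitely often'' to the uniform condition. Your explicit pigeonhole step and your check that $\eta_{E_n}\neq 1$, so the cases of the lemma do not overlap, spell out steps the paper leaves implicit.
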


\begin{proof}
By Lemma~\ref{lem:component-character-averages}, for fixed nontrivial
$\eta\in\mathfrak X_U$ one has
$W(\xi_\eta;E_n,R_n)=0 $ unless $\eta=\eta_{E_n}$.
If $\eta=\eta_{E_n}$, then
$W(\xi_\eta;E_n,R_n)
=\eta_{E_n}\bigl(\nrd(g_{E_n,R_n})\bigr)$,
which has absolute value $1$.  Hence
$W(\xi_\eta;E_n,R_n)\to0$ if and only if $\eta=\eta_{E_n}$ occurs only finitely
many times.  Since $\mathfrak X_U$ is finite, this holds for every nontrivial
$\eta\in\mathfrak X_U$ if and only if
$\eta_{E_n}\notin\mathfrak X_U-\{1\}$
for all sufficiently large $n$.
\end{proof}

\subsection{Equidistribution in the compact case}
\label{subsec:equidistribution-compact}

We now prove the equidistribution theorem in the compact Drinfeld--Stuhler case.
Thus $D$ is a quaternion division algebra over $F$, split at $\infty$, and
\[
        G=D^\times/F^\times .
\]
Then $G$ is anisotropic over $F$, so
\[
        X_U=G(F)\backslash G(\mathbb A_F)/U,
        \qquad
        U=T_\star(F_\infty)K_f,
\]
is compact.  In particular, the automorphic spectrum is discrete; there is no
Eisenstein contribution.

We keep the notation of
\S\ref{subsubsec:canonical-measure-CM-packets} and
\S\ref{subsec:common-analytic-input}.  In particular, $\mu_U$ is the normalized
quotient Haar measure on $X_U$, and $\mu_{E,R}$ is the normalized CM-packet
measure attached to $(E,R)$.

\begin{theorem}
\label{thm:compact-adic-duke}
Assume that $D$ is division.  
Let $(E_n,R_n)$ be a sequence of admissible CM data
satisfying
\[
        (E_n)_\infty\simeq F_{\infty,\star}
        \qquad
        \text{and}
        \qquad
        \deg \mathfrak D_{E_n,R_n}\rightarrow \infty .
\]
Assume moreover that
$\eta_{E_n}\notin \mathfrak X_U-\{1\}$
for all sufficiently large $n$,
where $\mathfrak X_U=\mathfrak X_{U_\star}$ is the finite reduced-norm component-character group defined
in \S\ref{subsubsec:reduced-norm-component-characters}.

Let $\mu_{E_n,R_n,\star}$
be the corresponding normalized CM-packet measures on $X_U=X_{U_\star}$.  Then
$(\mu_{E_n,R_n,\star})$
weakly-$*$ converges to $\mu_U$  on $X_U=X_{U_\star}$ for $n\to \infty $.  

\end{theorem}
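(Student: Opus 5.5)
The plan is the Weyl-criterion route, with all inputs already assembled in \S\ref{subsec:common-analytic-input}. Since $D$ is division, $X_U$ is compact. The right $U$-invariant functions in $L^2([G])$ therefore decompose discretely into one-dimensional automorphic characters and cuspidal representations; there is no continuous spectrum. For each cuspidal $\pi$, the space $\pi^U$ is finite dimensional. Finite linear combinations of $U$-invariant vectors from these constituents are dense in $C(X_U)$ for the sup norm. Indeed, $X_U$ is a compact quotient of a totally disconnected-by-compact group, and the $U$-invariant vectors of each $K'$-level, for small open $K'$, span the locally constant functions; these are dense by Stone--Weierstrass. Because the $\mu_{E_n,R_n}$ and $\mu_U$ are probability measures, it suffices to check $\int\varphi\,d\mu_{E_n,R_n}\to\int\varphi\,d\mu_U$ on such a spanning family. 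A $3\varepsilon$-argument then extends this to all of $C(X_U)$.

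The spanning family splits into three kinds of test functions.
\begin{enumerate}
\item For the constant function $1$, both sides equal $1$.
\item For $\varphi=\xi_\eta$ with $1\neq\eta\in\mathfrak X_U$, which exhausts the remaining one-dimensional spectrum by Lemma~\ref{lem:one-dimensional-spectrum}, we have $\int\xi_\eta\,d\mu_U=0$, since $\xi_\eta$ is a nontrivial character orthogonal to constants. By hypothesis $\eta_{E_n}\notin\mathfrak X_U-\{1\}$ for large $n$, so Lemma~\ref{lem:component-character-averages} gives $W(\xi_\eta;E_n,R_n)=0$ eventually (cf.\ Corollary~\ref{cor:component-character-decay}).
\item For $f\in\pi^U$ with $\pi$ cuspidal of trivial central character, $\int f\,d\mu_U=0$ by orthogonality to constants. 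Lemma~\ref{l:cuspidal-weyl-sum-decay} with $\chi=1$ gives $W(f;E_n,R_n)\to0$ as $\deg\mathfrak D_{E_n,R_n}\to\infty$.
\end{enumerate}

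Every cuspidal $\pi$ of $D^\times$ corresponds by Jacquet--Langlands to a cuspidal $\Pi$ of $\GL_2$. The case $\Pi$ one-dimensional does not arise here, since those are the $\xi_\eta$. So Proposition~\ref{prop:function field-lindelof-input} and Waldspurger's formula apply, and the decay is uniform in $n$ for each fixed $f$, which is all that is needed.

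The main point to check is the density step: the spectral decomposition of $L^2(X_U)$ must contain only the two types above, and finite sums must approximate continuous functions uniformly rather than just in $L^2$. I would handle this by working at a finite level $K'_f\subset K_f$. The space $C(X_U)$ is the uniform closure of the union, over open $V\subset T_\star(F_\infty)$, of locally constant functions on $G(F)\backslash G(\mathbb A_F)/VK_f$, which are finite dimensional because the space is compact. Projecting to $U$-invariants by averaging over the compact group $U$ preserves sup norms and commutes with the spectral decomposition. Each finite-dimensional space is then a finite sum of the eigenfunctions above.
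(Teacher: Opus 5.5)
Your proposal is essentially the paper's proof. It uses the Weyl criterion on locally constant functions and a finite spectral expansion into three parts: constants, the characters $\xi_\eta$ (which vanish eventually by Lemma~\ref{lem:component-character-averages} under the hypothesis on $\eta_{E_n}$), and non-one-dimensional cuspidal vectors (handled by Lemma~\ref{l:cuspidal-weyl-sum-decay} with $\chi=1$), exactly as packaged in Lemma~\ref{l:spectral-reduction-compact}.

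One correction to your final paragraph. The space $G(F)\backslash G(\mathbb A_F)/VK_f$ with $V\subset T_\star(F_\infty)$ is compact but \emph{not} finite, because $T_\star(F_\infty)$ is not open in $\PGL_2(F_\infty)$, so its locally constant functions do not form a finite-dimensional space. What you need, as your first paragraph's ``small open $K'$'' suggests and as the paper does, is the following: the lift of a locally constant $\varphi$ to the compact space $[G]$ is right invariant under some compact open $J\subset G(\mathbb A_F)$, open also at $\infty$. Then $G(F)\backslash G(\mathbb A_F)/J$ is finite, $\varphi\in L^2([G])^J$ has finitely many isotypic components, and these stay right $U$-invariant because the isotypic projections commute with right translations.
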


\begin{remark}{\em
If $K_f$ is the image of the finite level subgroup
$K_I\subset D^\times(\mathbb A_{F,f})$ appearing in the analytic uniformization,
then the projectivized map
\[
        X_U=X_{U_\star}
        \rightarrow
        M_{\mathcal D,I}^{\operatorname{an}}(\mathbb C_\infty)/\Delta_I
\]
pushes the above equidistribution statement forward to the corresponding
projectivized analytic CM packets.  For an equidistribution statement on the full
level-$I$ analytic curve
$M_{\mathcal D,I}^{\operatorname{an}}(\mathbb C_\infty)$,
one works instead with the lifted quotient $\widetilde X_{I,\star}$.}
\end{remark}

\begin{remark}
\label{rem:finite-component-condition}{\em
The finite-component condition
$\eta_{E_n}\notin \mathfrak X_U-\{1\}$ for $n\gg0$
is automatic if $\deg \mathfrak D_{E_n/F}\rightarrow\infty$
because $\mathfrak X_U=\mathfrak X_{U_\star}$ is finite and depends only on the fixed level $U=U_\star$.
It is not automatic if the field $E_n$ is fixed and only the conductor of the order
$R_n$ tends to infinity.  In that case full equidistribution on $X_U$ can fail
when $\eta_{E_n}\in\mathfrak X_U$.
}
\end{remark}

Let us gather the well-known Weyl criterion
for our compact setting.

\begin{lemma}
\label{lem:weyl-criterion-compact}
We have that
$\mu_{E_n,R_n}$ weakly-$*$ converges to $\mu_U$ if and only if
$W(\varphi;E_n,R_n)\rightarrow 0$
for every locally constant complex-valued function $\varphi$ on $X_U$ satisfying
$\int_{X_U}\varphi\,d\mu_U=0$.
\end{lemma}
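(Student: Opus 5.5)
The argument is the standard Weyl criterion on a compact totally disconnected space, and it has two parts: a topological fact about $X_U$ and a density/approximation argument.

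\emph{Topology of $X_U$.} First I will record that $X_U=G(F)\backslash G(\mathbb A_F)/U$ is a compact Hausdorff space. Compactness holds because $D$ is division, so $G$ is anisotropic. Next I will check that $X_U$ is totally disconnected. Write $G(\mathbb A_F)=G(F_\infty)\times G(\mathbb A_{F,f})$. The finite part is totally disconnected. At $\infty$, the quotient $G(F_\infty)/T_\star(F_\infty)$ is a closed subset of the totally disconnected space $\mathbb P^1(F_{\infty,\star})$, since it is the orbit of $z_\star$. Hence $G(\mathbb A_F)/U$ is totally disconnected and locally compact.

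The quotient by the discrete group $G(F)$ needs one more observation. The stabilizers $G(F)\cap gUg^{-1}$ are finite, being discrete and compact. Because $K_f$ is open, $X_U$ is locally homeomorphic to the quotient of an open compact set $V\times gK_f$ by a finite group. Such quotients of profinite-type spaces remain totally disconnected. Consequently the locally constant functions separate points and form a subalgebra of $C(X_U)$ that contains the constants and is closed under conjugation. By Stone--Weierstrass, this subalgebra is uniformly dense in $C(X_U)$.

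\emph{The two directions.} For the forward direction, a locally constant function on a compact space is continuous. Weak-$*$ convergence $\mu_{E_n,R_n}\to\mu_U$ therefore gives $W(\varphi;E_n,R_n)\to\int\varphi\,d\mu_U=0$ whenever $\varphi$ has mean zero.

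For the converse, let $\psi\in C(X_U)$ and $\epsilon>0$. I choose a locally constant $\varphi$ with $\|\psi-\varphi\|_\infty<\epsilon$ and split it as $\varphi=c+\varphi_0$, where $c=\int\varphi\,d\mu_U$ and $\varphi_0$ has mean zero. The Weyl sum of the constant is exact, since each $\mu_{E_n,R_n}$ is a probability measure: $W(c;E_n,R_n)=c$. The hypothesis gives $W(\varphi_0;E_n,R_n)\to0$. Using that all measures involved have total mass $1$, I obtain
\[
\limsup_n\Bigl|\int\psi\,d\mu_{E_n,R_n}-\int\psi\,d\mu_U\Bigr|\le 2\epsilon .
\]
Since $\epsilon$ is arbitrary, $\int\psi\,d\mu_{E_n,R_n}\to\int\psi\,d\mu_U$, which is weak-$*$ convergence.

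\emph{Main obstacle.} The only nonroutine step is the density of locally constant functions, that is, the total disconnectedness of $X_U$. The delicate point is that the factor at $\infty$ is the non-open torus $T_\star(F_\infty)$. I will handle it through the identification of $G(F_\infty)/T_\star(F_\infty)$ with the orbit of $z_\star$ inside $\mathbb P^1(F_{\infty,\star})$, a subspace of a non-archimedean space. If needed, I can instead argue directly: $G(F_\infty)$ has a basis of neighbourhoods of the identity consisting of open compact subgroups, and their images give a basis of clopen sets in the quotient.
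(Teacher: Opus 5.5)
Your proposal is correct and follows essentially the same route as the paper: compactness of $X_U$, uniform density of locally constant functions coming from total disconnectedness, and the total-mass-one reduction to mean-zero test functions, with you supplying the topological details that the paper only asserts. One harmless slip: the orbit $G(F_\infty)z_\star=\Omega(F_{\infty,\star})$ is open, not closed, in $\mathbb P^1(F_{\infty,\star})$, but total disconnectedness passes to arbitrary subspaces, and your direct argument (images of $gJU$ for compact open subgroups $J$ are clopen in $X_U$) avoids the point entirely.
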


\begin{proof}
Since $X_U$ is compact, weak convergence of probability measures is equivalent to
convergence against continuous functions.  Locally constant functions are dense in
$C(X_U)$, because $X_U$ is a compact totally disconnected adelic quotient.  Since
all measures involved have total mass $1$, it is enough to test functions with zero
$\mu_U$-mean.
\end{proof}

\subsubsection{Spectral reduction in the compact case}
\label{subsubsec:spectral-reduction-compact}

Because $\PGL_1(D)$ is anisotropic over $F$, the quotient
\[
        [G]:=G(F)\backslash G(\mathbb A_F)
\]
is compact.  Hence the right regular representation on $L^2([G])$ has purely
discrete spectrum:
\[
        L^2([G])
        =
        \widehat{\bigoplus}_{\pi\in\mathcal A(G)}
        m(\pi)\,\pi ,
\]
where $\mathcal A(G)$ denotes the irreducible automorphic representations occurring
in $L^2([G])$, and $m(\pi)<\infty$ is the corresponding multiplicity.  Since $U$ is
compact, averaging over $U$ projects onto right $U$-invariants, and therefore
\[
        L^2(X_U)
        =
        L^2([G])^U
        =
        \widehat{\bigoplus}_{\pi\in\mathcal A(G)}
        m(\pi)\,\pi^U .
\]
The trivial representation gives the constants.  The remaining one-dimensional
representations are exactly the component characters described in
Lemma~\ref{lem:one-dimensional-spectrum}.  Recall that cuspidality is defined by vanishing of constant terms along proper
$F$-parabolic subgroups \cite[\S4.6]{borel-jacquet}.  Since
$\PGL_1(D)$ is anisotropic, it has no proper $F$-parabolic subgroups
\cite[Exp.~XXVI, Cor.~6.12]{sga3-xxvi}.  Thus the cuspidality condition is vacuous
on the compact quotient.  We therefore separate the one-dimensional automorphic
characters and apply the cuspidal estimates only to the remaining non-character
discrete spectrum.

\begin{lemma}
\label{l:spectral-reduction-compact}
Assume that
\begin{enumerate}
\item for every nontrivial $\eta\in\mathfrak X_U$,
$W(\xi_\eta;E_n,R_n)\rightarrow 0$;

\item for every non-one-dimensional cuspidal automorphic representation
$\pi\subset L^2([G])$ and every $f\in\pi^U$,
$W(f;E_n,R_n)\rightarrow 0$.
\end{enumerate}
Then
$\mu_{E_n,R_n}$ weakly-$*$ converges to $\mu_U$.
\end{lemma}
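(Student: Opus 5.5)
By the Weyl criterion (Lemma~\ref{lem:weyl-criterion-compact}), it suffices to show that $W(\varphi;E_n,R_n)\to0$ for every locally constant $\varphi$ on $X_U$ with $\int_{X_U}\varphi\,d\mu_U=0$. The plan is to expand such a $\varphi$ spectrally as a \emph{finite} sum, so that only finitely many Weyl sums occur, each controlled by one of the two hypotheses.

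\textbf{Step 1: finite-dimensional invariants.} Since $\varphi$ is locally constant on the compact space $X_U$, there is an open compact $K'_f\subset K_f$ such that, lifted to $[G]$, $\varphi$ is right-invariant under $U'=T_\star(F_\infty)K'_f$ and under nothing smaller needed. The key observation is that $L^2([G])^{U'}=L^2(G(F)\backslash G(\mathbb A_F)/U')$ is finite-dimensional. Indeed, $G(F)\backslash G(\mathbb A_F)/(G(F_\infty)K'_f)$ is finite, and on each of its pieces one gets $\Gamma\backslash G(F_\infty)/T_\star(F_\infty)$ with $\Gamma$ a cocompact lattice. This is where I expect the only real subtlety, because $T_\star(F_\infty)$ is \emph{not} open. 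If finite-dimensionality fails, I would avoid it as follows. The Weyl sum only sees finitely many $\pi$ once $\varphi$ is fixed: $\varphi$ is a vector in $L^2([G])^U$, and its components in the discrete decomposition
\[
L^2(X_U)=\widehat{\bigoplus}_\pi m(\pi)\pi^U
\]
are nonzero only for finitely many $\pi$. This is because $\varphi$ is smooth, being invariant under an open subgroup $K_\infty\subset G(F_\infty)$ by local constancy, so $\varphi\in L^2([G])^{K_\infty T_\star K'_f}$, which is finite-dimensional since $K_\infty K'_f$ is open compact and $[G]$ is compact. Replacing $U$ by a subgroup is harmless, since this is a statement about the vector $\varphi$ alone.

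\textbf{Step 2: decomposition.} Write $\varphi=c_0+\sum_{\eta}c_\eta\xi_\eta+\sum_{j}f_j$ as a finite sum. Here $c_0$ is the constant term, which vanishes by the mean-zero hypothesis. The $\xi_\eta$ with $\eta\in\mathfrak X_U-\{1\}$ are the one-dimensional summands, by Lemma~\ref{lem:one-dimensional-spectrum}. Each $f_j$ is the projection of $\varphi$ onto an isotypic component $m(\pi_j)\pi_j$ with $\pi_j$ infinite-dimensional. Since the projection commutes with the right $U$-action, $f_j\in\pi_j^U$, the $U$-invariants in that component, which is a finite sum of vectors in copies of $\pi_j^U$. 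Each such $\pi_j$ is cuspidal, because cuspidality is vacuous for anisotropic $G$ as recalled above.

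\textbf{Step 3: linearity and conclusion.} Since $W(\cdot\,;E_n,R_n)$ is linear,
\[
W(\varphi;E_n,R_n)=\sum_\eta c_\eta W(\xi_\eta;E_n,R_n)+\sum_j W(f_j;E_n,R_n),
\]
a finite sum. Hypothesis (1) sends each term of the first sum to $0$, and hypothesis (2) does the same for the second. Hence the whole sum tends to $0$, and the Weyl criterion gives weak-$*$ convergence.
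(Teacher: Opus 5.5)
Your fallback argument is correct and is essentially the paper's proof. Both use the Weyl criterion, then invariance of $\varphi$ under a compact open subgroup $J$ (e.g.\ $K_\infty K_f$), so that $L^2([G])^J$ is finite-dimensional and $\varphi$ has a finite spectral expansion into the constant, the characters $\xi_\eta$ with $\eta\in\mathfrak X_U-\{1\}$, and $U$-invariant vectors in infinite-dimensional (automatically cuspidal) summands, and finish by linearity with hypotheses (1) and (2).

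You should drop the first claim of Step 1, though: $L^2([G])^{U'}$ really is infinite-dimensional. Since $T_\star(F_\infty)$ is not open, the push-forward of Haar measure to $X_U$ has no atoms, so only the compact-open-subgroup version of the finiteness argument is valid.
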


\begin{proof}
By Lemma~\ref{lem:weyl-criterion-compact}, it is enough to prove
$W(\varphi;E_n,R_n)\rightarrow 0$
for every locally constant $\varphi$ on $X_U$ with zero $\mu_U$-mean.

View $\varphi$ as a right $U$-invariant function on $[G]$.  Since $\varphi$ is
locally constant, it is right invariant under some compact open subgroup
$J\subset G(\mathbb A_F)$.  Since $G$ is anisotropic, the double quotient
$G(F)\backslash G(\mathbb A_F)/J$
is finite.  Hence $L^2([G])^J$ is finite dimensional, and $\varphi$ has a finite
spectral expansion, which splits as
\[
        \varphi
        =
        \varphi_{\mathrm{triv}}
        +
        \varphi_{\mathrm{comp}}
        +
        \varphi_{\mathrm{cusp}}.
\]
The zero-mean condition gives $\varphi_{\mathrm{triv}}=0$.  By
Lemma~\ref{lem:one-dimensional-spectrum}, $\varphi_{\mathrm{comp}}$ is a finite
linear combination of the nontrivial characters $\xi_\eta$ with
$\eta\in\mathfrak X_U-\{1\}$.  The remaining term
$\varphi_{\mathrm{cusp}}$ is a finite linear combination of vectors in
non-one-dimensional cuspidal representations.  Assumptions $(1)$ and $(2)$ therefore
give $W(\varphi;E_n,R_n)\rightarrow 0$.
By Lemma~\ref{lem:weyl-criterion-compact} we conclude.
\end{proof}

\begin{proof}[Proof of Theorem~\ref{thm:compact-adic-duke}]
The finite component condition
$\eta_{E_n}\notin\mathfrak X_U-\{1\}$ for $n\gg0$
implies, by Corollary~\ref{cor:component-character-decay}, that
\[
        W(\xi_\eta;E_n,R_n)\rightarrow 0
\]
for every nontrivial component character $\xi_\eta$.

The non-one-dimensional cuspidal contribution is handled by
Lemma~\ref{l:cuspidal-weyl-sum-decay}, applied with the trivial packet character
$\chi=1$.  Hence the two hypotheses of
Lemma~\ref{l:spectral-reduction-compact} are satisfied.  Therefore
we obtain the desired equidistribution.
\end{proof}

\subsection{Equidistribution in the non-compact case}
\label{subsec:equidistribution-noncompact}

We now assume that
\[
        D\simeq M_2(F),
        \qquad
        G=\PGL_2 .
\]
Then
\[
        X_U=\PGL_2(F)\backslash\PGL_2(\mathbb A_F)/U,
        \qquad
        U=T_\star(F_\infty)K_f,
\]
has finite volume but is not compact.  Thus the automorphic spectrum contains,
besides constants and cuspidal representations, the continuous spectrum generated by
Eisenstein series.

The Weyl sums, CM packets, packet measures, component characters, and cuspidal
Weyl-sum estimates are those of
\S\ref{subsec:common-analytic-input}.  The only new issue in the split case is the
Eisenstein contribution, which is the analytic expression of non-escape of mass.

\subsubsection{Statement of the theorem}
\label{subsubsec:split-duke-statement}

\begin{theorem}
\label{thm:split-adic-duke}
Assume that $D\simeq M_2(F)$, so that $G=\PGL_2$.  Let
$(E_n,R_n)$ be a sequence of admissible CM data  satisfying
\[
        (E_n)_\infty\simeq F_{\infty,\star}
        \qquad
        \text{and}
        \qquad
        \deg\mathfrak D_{E_n,R_n}\rightarrow\infty .
\]
Assume moreover that
$\eta_{E_n}\notin \mathfrak X_U-\{1\}$
for all sufficiently large $n$,
where $\mathfrak X_U=\mathfrak X_{U_\star}$ is the finite reduced-norm component-character group defined
in \S\ref{subsubsec:reduced-norm-component-characters}.

Let
$\mu_{E_n,R_n,\star}$
be the corresponding normalized CM-packet measures on
\[
        X_U=\PGL_2(F)\backslash\PGL_2(\mathbb A_F)/U.
\]
Then
$\mu_{E_n,R_n,\star}$ weakly-$*$ converges to $\mu_U$ on $X_U$ as $n\to\infty$. 
\end{theorem}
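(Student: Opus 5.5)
The plan follows the compact case: the same spectral reduction, plus a treatment of the continuous spectrum and a substitute for the compactness used in Lemma~\ref{lem:weyl-criterion-compact}.

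\emph{Weyl criterion on a noncompact quotient.} We test against locally constant compactly supported functions $\varphi$ on $X_U$; these are dense in $C_c(X_U)$. Weak-$*$ convergence to the probability measure $\mu_U$ then follows if two things hold. First, $W(\varphi;E_n,R_n)\to\int\varphi\,d\mu_U$ for every such $\varphi$. Second, there is no escape of mass: the constant $1$ is not compactly supported, so tightness must be proved. I will get tightness by also allowing test functions that are pseudo-Eisenstein series $E(\cdot\,|\,\psi)$ built from compactly supported $\psi$ on $N(\mathbb A)B(F)\backslash \PGL_2(\mathbb A_F)/U$. Truncating at height $T$, the indicator of the cusp region is dominated by such a pseudo-Eisenstein series minus its (small) mean. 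Hence it suffices to show that Weyl sums of pseudo-Eisenstein series converge to their $\mu_U$-means. Equivalently, one can prove directly the uniform bound $\mu_{E,R}(\{\mathrm{ht}>T\})\ll q^{-cT}+o(1)$.

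\emph{Spectral decomposition.} A compactly supported $U$-invariant locally constant $\varphi$ has the expansion
\[
\varphi=\varphi_{\mathrm{triv}}+\varphi_{\mathrm{comp}}+\varphi_{\mathrm{cusp}}+\varphi_{\mathrm{Eis}}.
\]
Because $\varphi$ is $J$-invariant for a fixed compact open $J$, the cuspidal part is a finite sum. The parts are handled as follows.
\begin{itemize}
\item The component-character part is dispatched by Corollary~\ref{cor:component-character-decay}, using the hypothesis $\eta_{E_n}\notin\mathfrak X_U-\{1\}$.
\item The cuspidal part is dispatched by Lemma~\ref{l:cuspidal-weyl-sum-decay} with $\chi=1$.
\item Over a function field the continuous part is an integral over a compact circle $s\in\tfrac12+i[0,2\pi/\log q]$, and over finitely many unramified-character twists $\omega$ with bounded ramification at $J$. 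It has the form
\[
\varphi_{\mathrm{Eis}}=\sum_{\omega}\int \langle\varphi,E(\phi_\omega,s)\rangle E(\phi_\omega,s)\,ds,
\]
for $\phi_\omega$ in a finite-dimensional space of $J$-fixed sections.
\end{itemize}
I would rather avoid interchanging the packet sum with this integral. Instead I will apply the Weyl sum to the Eisenstein series pointwise in $s$ and bound it uniformly. This is legitimate because the packet sum is finite and the $s$-integral is compact.

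\emph{Eisenstein toric periods (main obstacle).} The key step is to bound $P^D_{1}(E(\phi,s)_{E,R})$ uniformly on the unitary circle. Unfolding the Hecke–Tate integral over $E^\times\mathbb A_F^\times\backslash\mathbb A_E^\times$ gives
\[
P(E(\phi,s)_{E,R})=L^S(\tfrac12+s',\omega\circ N_{E/F})\cdot\frac{\prod_{v\in S}Z_v(s,\phi_v,g_{E,R,v})}{L(1+2s',\omega^2)},
\]
where $L(\cdot,\omega\circ N_{E/F})=L(\cdot,\omega)L(\cdot,\omega\eta_E)$ and $s'=s-\tfrac12$ is imaginary. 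For the global factors:
\begin{itemize}
\item $L(\tfrac12+it,\omega\eta_E)$ is a polynomial in $q^{-s}$ of degree $\ll\deg\mathfrak D_{E}$ with zeros on the critical line by Weil's RH. The Littlewood argument of Proposition~\ref{prop:function field-lindelof-input} then gives $\ll D_E^{\varepsilon}$ uniformly in $t$.
\item $L(\cdot,\omega)$ is fixed.
\item $1/L(1+2it,\omega^2)$ is bounded, since $L(s,\omega^2)$ has no zeros on $\Re s=1$. This needs care near the pole when $\omega^2=1$, but the resulting $1/L$ factor actually vanishes there.
\end{itemize}
The local factors at $v$ dividing the conductor $\mathfrak f_R$ or the level are local Tate integrals of the translated section. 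Eisenstein matrix coefficients on the unitary axis are bounded by $\Xi_v$, so I estimate them by Lemma~\ref{lem:elmv-local-building-integral-n2} exactly as in Lemma~\ref{lem:split-local-matrix-coefficient-estimate}. This gives
\[
\prod_{v}|Z_v|\ll D(E,R)^{\varepsilon}\,\vol(K_T)\,D_E^{0}\Delta_R^{-\eta}\cdot\prod_v q_v^{-b_v/2}.
\]
Dividing by $\vol(K_T)\#\calp_{E,R}\gg \vol(K_T)D(E,R)^{1/2-\varepsilon}$, via Lemma~\ref{lem:packet-size-lower-bound}, yields $W(E(\phi,s);E,R)\ll D(E,R)^{-\delta}$ uniformly in $s$ on the circle. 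The delicate points are two:
\begin{itemize}
\item checking that the period is linear rather than quadratic, so the packet normalization works with exponent $\tfrac12$ against $D_E^{1/2}$ from $\vol(\mathcal O_{E_v}^\times/\mathcal O_v^\times)$;
\item keeping the bound uniform as $s$ approaches the residual point.
\end{itemize}

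\emph{Conclusion.} Integrating the uniform bound over the compact spectral circle gives $W(\varphi_{\mathrm{Eis}};E_n,R_n)\to0$. The same computation with $\psi$ in place of $\phi$ handles the pseudo-Eisenstein series and hence non-escape of mass. Combining the four parts with the modified Weyl criterion proves the weak-$*$ convergence.
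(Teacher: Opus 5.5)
Your global architecture is the paper's. You use the Weyl criterion on $C_c^\infty(X_U)$, the four-way spectral split, Corollary~\ref{cor:component-character-decay} for the $\xi_\eta$, and Lemma~\ref{l:cuspidal-weyl-sum-decay} with $\chi=1$. For the continuous part you use Hecke unfolding, RH for $L(\tfrac12,\omega\eta_E)$, and the packet bound of Lemma~\ref{lem:packet-size-lower-bound}, uniformly on the compact unitary circle. The separate tightness step is superfluous. If probability measures converge against $C_c(X_U)$ to the probability measure $\mu_U$, no mass can escape: take $h\in C_c(X_U)$ with $0\le h\le 1$ and $\int h\,d\mu_U>1-\varepsilon$. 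This is why Lemma~\ref{lem:weyl-criterion-noncompact} is all the paper needs. As written, your tightness step would not work anyway. Pseudo-Eisenstein series with compactly supported $\psi$ are themselves compactly supported on $[G]$, so dominating the cusp would require the incomplete Eisenstein series with $\psi=\mathbf 1_{\{\mathrm{ht}>T\}}$.

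The genuine gap is the local Eisenstein estimate. The unfolded period is \emph{linear} in the section, and $Z_v=\int_{T_E(F_v)}\Phi_v(t\,g_{E,R,v})\,dt$ is not a matrix-coefficient integral. So the $\Xi_v$-domination behind Lemma~\ref{lem:split-local-matrix-coefficient-estimate} does not apply: $|\Phi_v|=\delta_B^{1/2}$ (spherical case, unitary axis) is unbounded, and the comparison fails even after averaging over the torus. Take $E_v/F_v$ an unramified field, spherical data, and $R_v$ of conductor $\mathfrak p_v^{c}$. The torus orbit of the vertex $g_{E,R,v}\mathcal O_v^2$ is the whole sphere of radius $c$ about the torus-fixed vertex. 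Averaging $\delta_B^{1/2}$ over that sphere makes $|Z_v|$ of order $q_v^{-c/2}\vol(\mathcal O_{E_v}^\times/\mathcal O_v^\times)$, up to a factor $c+1$ and the height of the fixed vertex. By contrast, $\int\Xi_v(g^{-1}tg)\,dt$ is of order $q_v^{-c}\vol(\mathcal O_{E_v}^\times/\mathcal O_v^\times)$. So Lemma~\ref{lem:elmv-local-building-integral-n2} controls the wrong quantity. Relative to $\vol(K_{T,R,v})\asymp q_v^{-c}\vol(\mathcal O_{E_v}^\times/\mathcal O_v^\times)$, the local factor actually \emph{grows}, like $\Delta_v^{1/4}$. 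Your displayed bound with $\prod_v q_v^{-b_v/2}$ is therefore false. There is also a sign slip, since $\vol(\mathcal O_{E_v}^\times/\mathcal O_v^\times)\asymp q_v^{+b_v/2}\vol(K_{T,R,v})$. The missing input is a bound for linear toric (Tate) integrals of principal-series vectors. The paper takes it from the first estimate of \cite[Prop.~9.9]{ELMV3}, namely $|\mathcal Z_v^\natural|/\vol(K_{T,R,v})\ll(n_v+1)^{A_0}\Delta_v^{1/2-\eta}$. For uniformity at ramified $E_v$ it adds Lemma~\ref{lem:tate-interpolation-estimate} (Tate's local functional equation plus three circles); see Proposition~\ref{prop:eis-local-subconvex}. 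Decay survives because $\#\calp_{E,R}\gg D(E,R)^{1/2-\varepsilon}$ beats $\Delta_R^{1/2-\eta}$.

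Relatedly, your $D_E^{0}$ is too optimistic. The $\infty$-factor of the linear period is $\int_{T_\star(F_\infty)}\Phi_\infty(z_\infty t)\,dt=\vol(T_\star(F_\infty))\,\Phi_\infty(z_\infty)$. This depends on $E$ through $\delta_B(z_\infty)^{1/2}$. The conjugation invariance that makes the $\infty$-factor constant in the cuspidal case holds only for the bilinear integral. Intrinsically, the Eisenstein Weyl sum is in general of size $D_E^{-1/4}\Delta_R^{-1/4}|L(\tfrac12,\omega\eta_E)|$, up to $D(E,R)^{o(1)}$. This is the analogue of the factor $|d|^{s/2}$ in Hecke's formula: the sum of $E(z,s)$ over Heegner points of discriminant $d$ is, up to a bounded constant, $|d|^{s/2}\zeta_K(s)/\zeta(2s)$. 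The paper's Lemma~\ref{lem:eisenstein-weyl-sum-decay} also treats this factor as fixed, so its $D_E^{-1/2+\varepsilon}$ should read $D_E^{-1/4+\varepsilon}$. Since RH gives $L(\tfrac12,\omega\eta_E)\ll D_E^{\varepsilon}$, the decay, and hence the theorem, are unaffected.
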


Let us gather also the non-compact version of the Weyl criterion.
Set
\[
        \mathcal T_U:=C_c^\infty(X_U)+\mathbb C\cdot 1 .
\]

\begin{lemma}
\label{lem:weyl-criterion-noncompact}
  Suppose that
\[
        W(\varphi;E_n,R_n)\rightarrow 0
\]
for every $\varphi\in\mathcal T_U$
satisfying $\int_{X_U}\varphi\,d\mu_U=0$.
Then
$\mu_{E_n,R_n}$ weakly-$*$ converges to $\mu_U$ on $X_U$.
\end{lemma}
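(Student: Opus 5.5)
The plan is a standard approximation argument on the noncompact quotient. Write $\mu_n:=\mu_{E_n,R_n}$. Both $\mu_n$ and $\mu_U$ are probability measures on the locally compact, second countable, totally disconnected space $X_U$, so weak-$*$ convergence may be tested against $C_c(X_U)$, or against bounded continuous functions once tightness is known. We begin with an arbitrary $\psi\in\mathcal T_U$. The function $\varphi:=\psi-\int\psi\,d\mu_U$ still lies in $\mathcal T_U$, because $\mathcal T_U$ contains the constants, and it has zero mean. Since both measures have mass one, the hypothesis gives
\[
\int\psi\,d\mu_n-\int\psi\,d\mu_U=W(\varphi;E_n,R_n)\rightarrow 0 .
\]
So $\int\psi\,d\mu_n\to\int\psi\,d\mu_U$ for every $\psi\in\mathcal T_U$.

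Next we show that no mass escapes. Given $\varepsilon>0$, inner regularity of $\mu_U$ yields a compact $K\subset X_U$ with $\mu_U(K)>1-\varepsilon$. Because $X_U$ is totally disconnected and locally compact, $K$ is covered by finitely many compact open sets. Their union $K'$ is compact open, and its indicator $1_{K'}$ is locally constant with compact support, so it lies in $C_c^\infty(X_U)\subset\mathcal T_U$. The first step then gives
\[
\mu_n(K')\rightarrow\mu_U(K')>1-\varepsilon ,
\]
so $\mu_n(K')>1-\varepsilon$ for all large $n$. The sequence $(\mu_n)$ is therefore tight.

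Finally we pass to general test functions. Let $h$ be bounded and continuous on $X_U$ with $\|h\|_\infty\le 1$. On the compact open set $K'$, the function $h$ can be uniformly approximated within $\varepsilon$ by a locally constant function $\psi$ supported in $K'$. This uses compactness and total disconnectedness: partition $K'$ into finitely many compact open pieces on which $h$ oscillates by less than $\varepsilon$. Then
\[
\Bigl|\int h\,d\mu-\int\psi\,d\mu\Bigr|\le\varepsilon+2\,\mu(X_U-K')
\]
for $\mu=\mu_n$ with $n$ large and for $\mu=\mu_U$, and the right-hand side is at most $3\varepsilon$. Combining this with $\int\psi\,d\mu_n\to\int\psi\,d\mu_U$ gives $\limsup_n\bigl|\int h\,d\mu_n-\int h\,d\mu_U\bigr|\le 6\varepsilon$. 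This proves convergence, even in the strong narrow sense.

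The only step that needs care is the tightness argument. It is what makes the constant function $1$ in $\mathcal T_U$ meaningful: it guarantees that the limit has full mass, so no mass escapes into the cusp. It relies on indicators of compact open sets being admissible test functions, which is where we use that $X_U$ is totally disconnected. This holds because $U$ contains the open subgroup $K_f$ and $G(F_\infty)/T_\star(F_\infty)$ is totally disconnected, so the topology should be checked rather than assumed.
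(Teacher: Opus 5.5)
Your proof is correct, and its first step is exactly the paper's argument: subtracting the $\mu_U$-mean keeps you in $\mathcal T_U$ (this is where the constants are used) and gives convergence against $C_c^\infty(X_U)$. The paper then passes to $C_c(X_U)$ ``by density'', which is the sup-norm approximation you describe and needs no tightness, since all measures involved are probability measures. Your tightness step via indicators of compact open sets goes beyond what the paper does: it upgrades the conclusion to narrow convergence against bounded continuous functions. It also makes explicit the non-escape of mass, which is automatic anyway once probability measures converge vaguely to the probability measure $\mu_U$.
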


\begin{proof}
Let $h\in C_c^\infty(X_U)$ and put
\[
        \varphi:=h-\int_{X_U}h\,d\mu_U .
\]
Then $\varphi\in\mathcal T_U$ has $\mu_U$-mean zero, so by hypothesis
\[
        \int_{X_U}h\,d\mu_n-\int_{X_U}h\,d\mu_U
        =
        W(\varphi;E_n,R_n)
        \rightarrow 0.
\]
Thus $\mu_{E_n,R_n}$ weakly-$*$ converges  to $\mu_U$, first against $C_c^\infty(X_U)$ and hence against $C_c(X_U)$ by density.  
\end{proof}

\subsubsection{Spectral reduction in the split case}
\label{subsubsec:spectral-reduction-noncompact}

By the function field spectral decomposition for $\PGL_2$
\cite[VI.2.7]{moeglin-waldspurger}, the $L^2$-spectrum of
\[
        [G]:=G(F)\backslash G(\mathbb A_F)
\]
decomposes as
\[
        L^2([G])
        =
        \mathbb C\cdot 1
        \oplus
        L^2_{\mathrm{comp}}
        \oplus
        L^2_{\mathrm{cusp}}
        \oplus
        L^2_{\mathrm{Eis}} .
\]
Here $L^2_{\mathrm{comp}}$ is generated by the nontrivial one-dimensional
automorphic characters, $L^2_{\mathrm{cusp}}$ is the cuspidal spectrum, and
$L^2_{\mathrm{Eis}}$ is the continuous Eisenstein spectrum.  After passing to
right $U$-invariants, the component-character part is finite-dimensional and is
spanned by the characters $\xi_\eta$ with
$\eta\in\mathfrak X_U-\{1\}$.
For a test function $h\in C_c^\infty(X_U)$, write
\[
        h
        =
        h_{\mathrm{triv}}
        +
        h_{\mathrm{comp}}
        +
        h_{\mathrm{cusp}}
        +
        h_{\mathrm{Eis}}
\]
for its spectral decomposition.

\begin{proposition}
\label{prop:spectral-reduction-noncompact}
Assume that

\begin{enumerate}
\item for every nontrivial component character $\xi_\eta$, with
$\eta\in\mathfrak X_U-\{1\}$,
$W(\xi_\eta;E_n,R_n)\to 0$;

\item for every $h\in C_c^\infty(X_U)$,
$W(h_{\mathrm{cusp}};E_n,R_n)\to 0$;

\item for every $h\in C_c^\infty(X_U)$,
$W(h_{\mathrm{Eis}};E_n,R_n)\to 0$.
\end{enumerate}

Then
$\mu_{E_n,R_n}$ weakly -$*$ converges to $\mu_U$.
\end{proposition}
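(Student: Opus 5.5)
The plan is to combine the Weyl criterion of Lemma~\ref{lem:weyl-criterion-noncompact} with the spectral decomposition of $L^2([G])$ recalled above. By that lemma it suffices to show $W(\varphi;E_n,R_n)\to0$ for every $\varphi\in\mathcal T_U$ with $\int_{X_U}\varphi\,d\mu_U=0$. Such a $\varphi$ has the form $\varphi=h+c\cdot1$ with $h\in C_c^\infty(X_U)$. Each $\mu_{E_n,R_n}$ and $\mu_U$ is a probability measure, so $W(1;E_n,R_n)=1$. The zero-mean condition then forces $c=-\int h\,d\mu_U=-h_{\mathrm{triv}}$, where we identify the constant term $h_{\mathrm{triv}}$ with its value. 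Hence $W(\varphi;E_n,R_n)=W(h-h_{\mathrm{triv}};E_n,R_n)$, and it suffices to treat $h-h_{\mathrm{triv}}=h_{\mathrm{comp}}+h_{\mathrm{cusp}}+h_{\mathrm{Eis}}$.

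The first step is to justify that the Weyl sum, which is evaluation at finitely many points, is additive over this decomposition. This requires each piece to be an honest continuous function, so that pointwise evaluation makes sense. Since $h$ is smooth and compactly supported, it is right-invariant under some compact open $J\subset U\cdot(\text{finite level})$. The projections onto the $L^2$-summands are $G(\mathbb A_F)$-equivariant, so every component is again $J$-invariant. The component-character part is then a finite combination of the $\xi_\eta$ with $\eta\in\mathfrak X_U-\{1\}$, because the right $U$-invariant one-dimensional spectrum is finite-dimensional by Lemma~\ref{lem:one-dimensional-spectrum}. The cuspidal part lies in the space of $J$-fixed cusp forms, which is finite-dimensional: cusp forms of fixed level over a function field have uniformly bounded support, so this space is a finite sum of pure tensors.

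The Eisenstein part needs more care, and I expect it to be the main obstacle. It is given by a wave-packet integral $\int \langle h,E(\cdot,\phi,s)\rangle E(g,\phi,s)\,ds$ over the unitary line, taken over a finite basis $\phi$ of $J$-fixed induced vectors. I will observe that in the function field case $s$ ranges over the compact circle $\mathbb R/(2\pi i/\log q)\mathbb Z$. The integrand is continuous in $(g,s)$ because Eisenstein series are rational in $q^{-s}$ with no poles on the unitary axis. The wave packet is therefore continuous, and the identity $h=h_{\mathrm{triv}}+h_{\mathrm{comp}}+h_{\mathrm{cusp}}+h_{\mathrm{Eis}}$ holds pointwise rather than only in $L^2$. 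To secure this I would cite \cite[VI.2.7]{moeglin-waldspurger} for the pointwise convergence of the spectral expansion of smooth compactly supported functions.

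With pointwise additivity in hand, linearity of $W$ gives
\[
W(\varphi;E_n,R_n)=W(h_{\mathrm{comp}};E_n,R_n)+W(h_{\mathrm{cusp}};E_n,R_n)+W(h_{\mathrm{Eis}};E_n,R_n).
\]
The first term is a fixed finite linear combination of the sums $W(\xi_\eta;E_n,R_n)$ with $\eta\neq1$, so it tends to $0$ by hypothesis $(1)$. The remaining two terms tend to $0$ by hypotheses $(2)$ and $(3)$. Lemma~\ref{lem:weyl-criterion-noncompact} then yields the weak-$*$ convergence.
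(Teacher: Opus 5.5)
Your proposal is correct and follows the paper's argument. You reduce via Lemma~\ref{lem:weyl-criterion-noncompact} to zero-mean $\varphi=h+c$, observe that the constant only affects the trivial component, and dispatch $h_{\mathrm{comp}}$, $h_{\mathrm{cusp}}$, $h_{\mathrm{Eis}}$ by hypotheses (1)--(3).

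Your extra justification of pointwise additivity, which the paper leaves implicit, is valid but heavier than needed. Every component of $h$ is right $J$-invariant for a compact open $J\subset G(\mathbb A_F)$, hence an honest function on the discrete set $[G]/J$, so the $L^2$ identity already holds pointwise without any wave-packet continuity argument. Also, $J$ cannot lie inside $U\cdot(\text{finite level})$, since $T_\star(F_\infty)$ is not open in $G(F_\infty)$; any compact open $J\subset G(\mathbb A_F)$ will do.
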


\begin{proof}
By Lemma~\ref{lem:weyl-criterion-noncompact}, it is enough to test
$\varphi\in\mathcal T_U$ with zero $\mu_U$-mean.  Write
\[
        \varphi=h+c,
        \qquad
        h\in C_c^\infty(X_U).
\]
The constant $c$ contributes only to the trivial spectral component.  Hence the
nontrivial spectral part of $\varphi$ is
\[
        h_{\mathrm{comp}}+h_{\mathrm{cusp}}+h_{\mathrm{Eis}}.
\]
The zero-mean condition kills the trivial component.  The component part
$h_{\mathrm{comp}}$ is a finite linear combination of the nontrivial characters
$\xi_\eta$, so it is handled by $(1)$.  The cuspidal and Eisenstein parts are
handled by $(2)$ and $(3)$.  Therefore
\[
        W(\varphi;E_n,R_n)\to0.
\]
Lemma~\ref{lem:weyl-criterion-noncompact} gives the claim.
\end{proof}

\subsubsection{The Eisenstein contribution}
\label{subsubsec:eisenstein-contribution}

We now treat the Eisenstein contribution by direct unfolding.  The toric period of an Eisenstein
series unfolds to an abelian Hecke zeta integral.  The global $D_E$-aspect is
controlled by the function field Riemann hypothesis for the resulting abelian
$L$-function.  The local order aspect is controlled by the non-maximal order
estimate, and ramified local uniformity is supplied by the second estimate of
\cite[Proposition~9.9]{ELMV3}, proved using Tate's local functional equation and
the maximum modulus principle.

Let
\[
        P_{\mathrm{Eis}}\colon L^2(X_U)\rightarrow L^2_{\mathrm{Eis}}(X_U)
\]
be the orthogonal projection onto the Eisenstein spectrum.

In the following lemma, we write
$\omega=\otimes_v\omega_v$
for the unitary Hecke character of $F^\times\backslash\mathbb A_F^\times$
parametrizing the Eisenstein series.  We use the standard global and local Hecke
$L$-function notation:
\[
        L_F(s,\omega^2)=\prod_v L_{F_v}(s,\omega_v^2),
\qquad
\text{and}
\qquad
        L_E(s,\omega\circ N_{E/F})
        =
        \prod_v
        L_{E_v}(s,\omega_v\circ N_{E_v/F_v}).
\]
Here $N_{E/F}$ and $N_{E_v/F_v}$ are the global and local norm maps.  The embedding
$\iota\colon E\hookrightarrow M_2(F)$ is the fixed embedding used to realize $T_E$ inside $\PGL_2$, and $dt_v$ denotes the local quotient Haar measure on
$F_v^\times\backslash E_v^\times$ compatible with the product measure in the toric period.

\begin{lemma}
\label{lem:eis-unfolding}
Let $\mathcal E(g,\Phi,\omega)$ be a normalized Eisenstein series attached to a
$U$-invariant flat section $\Phi$ in a unitary principal series of
$\PGL_2(\mathbb A_F)$.  Put
\[
        \Phi_{v,E,R}(g):=\Phi_v(gg_{E,R,v})
\]
and
\[
        \mathcal P_{E,R}(\Phi,\omega)
        :=
        \int_{T_E(F)\backslash T_E(\mathbb A_F)}
        \mathcal E(tg_{E,R},\Phi,\omega)\,dt .
\]
Then
\begin{equation}
\label{eq:eis-unfolded}
        \mathcal P_{E,R}(\Phi,\omega)
        =
        \frac{
        L_E\!\left(\frac12,\omega\circ N_{E/F}\right)
        }{
        L_F(1,\omega^2)
        }
        \prod_v
        \mathcal Z_v^\natural(\Phi_{v,E,R},\omega_v),
\end{equation}
where
\[
        \mathcal Z_v^\natural(\Phi_{v,E,R},\omega_v)
        :=
        \frac{
        L_{F_v}(1,\omega_v^2)
        }{
        L_{E_v}\!\left(\frac12,\omega_v\circ N_{E_v/F_v}\right)
        }
        \int_{F_v^\times\backslash E_v^\times}
        \Phi_{v,E,R}(\iota(t_v))\,dt_v .
\]
For standard unramified local data, including $R_v=\mathcal O_{E_v}$ and spherical
$\Phi_v$, one has $\mathcal Z_v^\natural(\Phi_{v,E,R},\omega_v)=1$.
\end{lemma}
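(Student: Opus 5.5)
The plan is the classical Hecke unfolding of a toric period of an Eisenstein series, carried out adelically for $\PGL_2$ over $F$.

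\emph{Step 1: unfold the Eisenstein series.} Write
\[
\mathcal E(g,\Phi,\omega)=\sum_{\gamma\in B(F)\backslash\PGL_2(F)}\Phi(\gamma g),
\]
first for $\Phi$ in the range of absolute convergence, i.e.\ for the section $\Phi_s$ with $\operatorname{Re}s\gg0$, and recover the unitary line at the end by meromorphic continuation. The fixed embedding $\iota$ lets $T_E(F)=E^\times/F^\times$ act on $\mathbb P^1(F)=B(F)\backslash\PGL_2(F)$. Since $E$ is a field, this action is simply transitive: $E^\times/F^\times\simeq(E-\{0\})/F^\times\simeq\mathbb P^1(F)$, via the identification of $E$ with $F^2$ as an $F$-vector space on which $E$ acts by multiplication. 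Hence the sum over $B(F)\backslash\PGL_2(F)$ is indexed by $T_E(F)$ after fixing a base point $\gamma_0$, and $\iota(E^\times)\cap\gamma_0^{-1}B\gamma_0$ consists of scalars. Folding the sum against the quotient $T_E(F)\backslash T_E(\mathbb A_F)$ gives
\[
\mathcal P_{E,R}(\Phi,\omega)=\int_{T_E(\mathbb A_F)}\Phi(\gamma_0 t g_{E,R})\,dt .
\]
Absorbing $\gamma_0$ into the choice of the line (equivalently, into $\iota$) and using that $\Phi$ factors, this becomes $\prod_v\int_{F_v^\times\backslash E_v^\times}\Phi_{v,E,R}(\iota(t_v))\,dt_v$. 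The interchange of sum and integral is justified by absolute convergence for $\operatorname{Re}s$ large.

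\emph{Step 2: identify the local integrals as Tate integrals.} The function $t\mapsto\Phi_v(\iota(t)g)$ on $E_v^\times$ is of the form $\omega_v(N t)|Nt|^{s+1/2}\phi_v(t)$, with $\phi_v$ a Schwartz--Bruhat function on $E_v\simeq F_v^2$. This is the standard Godement section construction: $\Phi(g)=\int\phi((0,a)g)\,\omega|a|^{2s+1}d^\times a$, and the row vector $(0,1)\iota(t)$ identifies with $t\in E_v$. So each local factor is a Tate zeta integral on $E_v$ for the character $\omega_v\circ N$, divided by the normalization of the section, which is $L_{F_v}(2s+1,\omega_v^2)$.

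\emph{Step 3: the unramified computation.} Take $v$ unramified with $R_v=\mathcal O_{E_v}$, $\Phi_v$ spherical and normalized by $\Phi_v(1)=1$, and $\iota$ optimal, so that $\phi_v$ is the characteristic function of $\mathcal O_{E_v}$. Tate's computation then gives
\[
\int_{F_v^\times\backslash E_v^\times}\Phi_{v,E,R}(\iota(t))\,dt
=\frac{L_{E_v}(s+\tfrac12,\omega_v\circ N)}{L_{F_v}(2s+1,\omega_v^2)}
\]
(up to the measure normalization, which is fixed to make this exact). At $s=0$ this gives $\mathcal Z_v^\natural=1$. Taking the product over $v$ with these normalizations yields \eqref{eq:eis-unfolded} in the half plane of convergence. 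Both sides continue meromorphically in $s$: the global side by Hecke, and the $\mathcal Z^\natural_v$ being entire, so the identity persists at $s=0$. On the unitary line $L_F(1,\omega^2)\neq0$.

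The main obstacle is Step 1: verifying the simple transitivity and the measure bookkeeping when passing from $T_E(F)\backslash T_E(\mathbb A_F)$ to $T_E(\mathbb A_F)$, which has no residual stabilizer precisely because $E$ is a field. The convergence and continuation to the unitary axis also need care, in particular when $\omega^2=1$, where $\mathcal E$ has a pole nearby; there one argues on the holomorphic normalized Eisenstein series with $L_F(1,\omega^2)$ in the denominator.
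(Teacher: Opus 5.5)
Your proposal is correct and follows the same route as the paper. Both arguments unfold the toric period using the simple transitivity of $T_E(F)$ on $B(F)\backslash\PGL_2(F)$, which turns it into a Tate integral over $E^\times\mathbb A_F^\times\backslash\mathbb A_E^\times$; this factors into local integrals, the unramified places are computed explicitly, and meromorphic continuation reaches the unitary axis. The paper only sketches this and cites Lorscheid, so your version supplies more detail.

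One phrasing in Step 2 is loose. The function $t\mapsto\Phi_v(\iota(t)g)$ is $F_v^\times$-invariant, so it is not itself of the form $\omega_v(Nt)|Nt|^{s+1/2}\phi_v(t)$. It is the $F_v^\times$-average of such a function, and that is exactly why integrating over $F_v^\times\backslash E_v^\times$ produces a Tate integral on $E_v^\times$.
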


\begin{proof}
This is the adelic Hecke unfolding for a nonsplit torus over a global function
field; see \cite[Thm.~4.3]{lorscheid-toroidal}.  In the
region of absolute convergence, unfolding gives a Tate integral over the projective
idele quotient
$E^\times\mathbb A_F^\times\backslash\mathbb A_E^\times$.
Factoring this Tate integral locally and using the normalization of the Eisenstein
series gives \eqref{eq:eis-unfolded}.  The identity on the unitary axis follows by
meromorphic continuation.
\end{proof}

The Eisenstein estimate uses two distinct inputs.  First, the 
function field GRH gives the required Lindel\"of-strength bound for
the abelian Hecke $L$-value appearing after unfolding.  Second, at the local
places where the quadratic algebra, the embedding, or the inducing character is
ramified, we need the second estimate of \cite[Proposition~9.9]{ELMV3}.  We
record this second estimate separately, deriving it from Tate's local functional equation (see \cite{tate-thesis})
 and the maximum modulus principle (as suggested in \cite[p.39]{ELMV3}).

\begin{lemma}
\label{lem:tate-local-functional-equation}
Let $k$ be a non-archimedean local field, let $A/k$ be a quadratic \'{e}tale
algebra, and let $e_A=e\circ\operatorname{Tr}_{A/k}$.  For
$\nu\colon A^\times\to\mathbb C^\times$ unitary and $\Phi\in\mathcal S(A)$, put
\[
        Z(\Phi,\nu,s)
        :=
        \int_{A^\times}
        \Phi(x)\nu(x)|x|_A^s\,d^\times x .
\]
Let $\widehat\Phi$ be the Fourier transform with respect to $e_A$ and the
self-dual measure on $A$.  Then
\[
        \epsilon(A,\nu,s,e_A)
        \frac{Z(\Phi,\nu,s)}
             {L(A,\nu,s)}
        =
        \frac{
        Z(\widehat\Phi,\nu^{-1},1-s)
        }{
        L(A,\nu^{-1},1-s)
        },
\]
and both sides are holomorphic in $s$.  Moreover, on $\Re(s)=0$,
\begin{equation}\label{e:tate-epsilon}
        |\epsilon(A,\nu,s,e_A)|^{-1}
        \ll_{e,k}
        \bigl(\disc(\nu)\disc(A/k)\bigr)^{-1/2}.
\end{equation}
\end{lemma}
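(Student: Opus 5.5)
The functional equation is Tate's local functional equation for the quadratic étale algebra $A/k$. Holomorphy of the normalized zeta integrals is a standard part of Tate's thesis, and the epsilon bound is an explicit computation of the epsilon factor. I would split into two cases. If $A=k\times k$, then $\nu=(\nu_1,\nu_2)$, the measures and $e_A$ factor, and everything reduces to products of the corresponding statements for $k$; by linearity it suffices to treat $\Phi=\Phi_1\otimes\Phi_2$. If $A$ is a field, $A$ is itself a non-archimedean local field and $e_A$ is a nontrivial additive character of it. Then the statement is literally Tate's local functional equation over $A$, with local factors $L(A,\nu,s)$ and $\epsilon(A,\nu,s,e_A)$ defined as in \cite{tate-thesis}.

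For the functional equation and holomorphy, I would follow Tate's argument.

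First, I would show that the ratio $Z(\Phi,\nu,s)/Z(\widehat\Phi,\nu^{-1},1-s)$ is independent of $\Phi$. This follows from the identity
\[
Z(\Phi,\nu,s)Z(\widehat\Psi,\nu^{-1},1-s)=Z(\Psi,\nu,s)Z(\widehat\Phi,\nu^{-1},1-s),
\]
which is obtained by Fubini in the strip $0<\Re s<1$.

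Second, I would define the epsilon factor by computing both sides for explicit test functions. If $\nu$ is unramified, take the indicator of $\mathcal O_A$ scaled to the conductor of $e_A$. If $\nu$ is ramified, take $\nu^{-1}\mathbf 1_{\mathcal O_A^\times}$ translated by a suitable element. In the ramified case one gets
\[
\epsilon(A,\nu,s,e_A)=\nu(c)\,|c|_A^{s-1/2}\times(\text{normalized Gauss sum}),
\]
where $c$ has valuation $a(\nu)+d(e_A)$. Here $a(\nu)$ is the conductor exponent, and $d(e_A)$ is the exponent of the conductor of $e_A$, namely the different of $A/k$ plus the fixed exponent of $e$.

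Third, I would prove holomorphy. $Z(\Phi,\nu,s)/L(A,\nu,s)$ is a Laurent polynomial in $q_A^{-s}$: this is a finite sum over shells $\varpi^n\mathcal O_A^\times$, bounded below, and the only possible tail is the geometric series, which is exactly cancelled by $L$. The same holds on the dual side. The functional equation then gives entire functions on both sides.

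The main point is the bound \eqref{e:tate-epsilon}. On $\Re s=0$ the unitary normalized Gauss sum has absolute value $1$, so
\[
|\epsilon|=|c|_A^{-1/2}=q_A^{(a(\nu)+d(e_A))/2}.
\]
Now $q_A^{a(\nu)}$ is $\disc(\nu)$ in the ELMV normalization, and $q_A^{d(e_A)}$ is comparable, up to a constant depending on $e$ and $k$, to the norm of the different, which is $\disc(A/k)$. Inverting gives $|\epsilon|^{-1}\ll(\disc(\nu)\disc(A/k))^{-1/2}$.

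The step needing care is matching normalizations in residue characteristic $2$. Wild ramification makes the different exponent large, but it enters exactly through $d(e_A)$ via the trace character, so no tameness assumption is needed. In the split case, $\disc(A/k)=1$ and $\disc(\nu)=\disc(\nu_1)\disc(\nu_2)$, so the bound follows from the one-variable computation.
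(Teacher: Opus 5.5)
Your proposal is correct. It reaches the result by a more self-contained route than the paper. The paper gives no derivation: it identifies the statement with Tate's local functional equation over the \'etale algebra $A$ and cites it in the already normalized form of \cite[Lemma~9.18]{ELMV3}, with \cite{tate-thesis} as the source.

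You instead rebuild Tate's local theory: independence of the ratio of zeta integrals from $\Phi$ via Fubini in $0<\Re s<1$, evaluation on explicit test functions, the Laurent-polynomial argument for holomorphy, and reduction of the split case to $k$. You then read $|\epsilon|$ off the explicit Gauss-sum formula.

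Your route makes explicit the normalizations the paper takes on trust. The conductor exponent of $e_A=e\circ\operatorname{Tr}_{A/k}$ is the exponent of the different of $A/k$ plus the ramification index times the conductor exponent $n(e)$ of $e$. Hence $q_A^{d(e_A)}=\disc(A/k)\,q_k^{2n(e)}$. This explains the factor $\disc(A/k)$ and why the implied constant depends only on $(e,k)$; the bound is in fact an identity up to the factor $q_k^{-n(e)}$. It also shows directly that wild ramification in residue characteristic $2$ causes no trouble.

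The citation, in turn, buys brevity and automatic agreement with the definition of $\disc(\nu)$ used in \cite{ELMV3}. Your identifications $\disc(\nu)=q_A^{a(\nu)}$, and $\disc(\nu)=\disc(\nu_1)\disc(\nu_2)$ in the split case, are exactly where you still rely on that source.

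You state without proof that the normalized Gauss sum has modulus $1$. This follows from applying the functional equation twice, $\epsilon(A,\nu,s,e_A)\,\epsilon(A,\nu^{-1},1-s,e_A)=\nu(-1)$ for the self-dual measure, together with $\overline{\epsilon(A,\nu,s,e_A)}=\nu(-1)\,\epsilon(A,\nu^{-1},\bar s,e_A)$ for unitary $\nu$. These give $|\epsilon|=1$ on $\Re s=\tfrac12$. A minor slip: in the split case your Laurent polynomials should be in $q_k^{-s}$ rather than $q_A^{-s}$.
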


\begin{proof}
This is Tate's local functional equation for local zeta integrals, applied to the
quadratic \'etale $k$-algebra $A$.  We use it in the form quoted by 
\cite[Lemma~9.18]{ELMV3}, which refers to Tate \cite{tate-thesis}. 
\end{proof}

\begin{lemma}
\label{lem:tate-interpolation-estimate}
Let $k,A,e,\nu$ be as in Lemma~\ref{lem:tate-local-functional-equation}.  Let 
$\iota\colon A\rightarrow k^2$
be a $k$-linear trivialization, and consider the Bruhat--Schwartz function $\Psi\in\mathcal S(k^2)$. Set
\[
        \Phi(x):=\Psi(x^\iota).
\]
Let $N_A$ be the canonical norm on $A$, let $N_0$ be the standard norm on $k^2$,
and choose $h\in\GL_2(k)$ such that\footnote{With the convention of \cite{ELMV3} for the action of $\GL_2(k)$ on norms.}
$hN_0=\iota^{-1}N_A$. Then
\[
        |\det h|_k^{-1/2}
        \left|
        \int_{A^\times}
        \Psi(x^\iota)\nu(x)|x|_A^{1/2}\,d^\times x
        \right|
        \ll_{\Psi,e,k}
        \left(
        \frac{\vol(\mathcal O_A)}
             {\vol(\mathcal O_k)^2}
        \right)^{1/2}
        \bigl(\disc(\nu)\disc(A/k)\bigr)^{-1/4}.
\]
The implied constant is uniform for $\Psi$ in any fixed finite-dimensional space of
Bruhat--Schwartz functions.
\end{lemma}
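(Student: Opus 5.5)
We prove Lemma~\ref{lem:tate-interpolation-estimate} by the maximum modulus (Phragm\'en--Lindel\"of) interpolation of \cite[p.~39]{ELMV3}. The function
\[
        G(s):=\frac{Z(\Phi,\nu,s)}{L(A,\nu,s)}
\]
is entire by Lemma~\ref{lem:tate-local-functional-equation}. In the function field setting it is a Laurent polynomial in $q_k^{-s}$, since $|x|_A^s$ only takes values $q_k^{-ns}$. So it is periodic in $\Im s$ and bounded on vertical strips, and the maximum modulus principle can be applied on the strip $0\le\Re s\le1$ with no growth issues. On $\Re s=1/2$ the local factor $L(A,\nu,s)$ is bounded above and below in terms of $k$ alone, because it is a product of at most two factors $(1-\alpha q_k^{-s})^{-1}$ with $|\alpha|\le1$. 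Hence it suffices to bound $|G(1/2)|$.

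First, bound $G$ on the line $\Re s=1$. Here $|Z(\Phi,\nu,1+it)|\le\int_{A^\times}|\Phi(x)||x|_A\,d^\times x$, which equals $\int_A|\Phi|\,dx$ up to the fixed normalization of $d^\times x$ against $dx$. Substituting $x^\iota=y$ turns this into $\int_{k^2}|\Psi(y)|\,dy$ times the Jacobian of $\iota$ relative to the self-dual measures. Using $hN_0=\iota^{-1}N_A$, this Jacobian is $|\det h|_k^{\pm1}\vol(\mathcal O_A)/\vol(\mathcal O_k)^2$. The sign must be chosen so that the factor $|\det h|_k^{-1/2}$ in the statement exactly cancels the half-power that survives the interpolation; fixing this normalization is the first bookkeeping step. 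Since $1/L(A,\nu,1+it)$ is bounded, we get
\[
        |G(1+it)|\ll_{\Psi}|\det h|_k\,\frac{\vol(\mathcal O_A)}{\vol(\mathcal O_k)^2}.
\]

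Second, bound $G$ on the line $\Re s=0$ using the functional equation. There $G(it)=\epsilon(A,\nu,it,e_A)^{-1}\,Z(\widehat\Phi,\nu^{-1},1-it)/L(A,\nu^{-1},1-it)$. The quotient $Z/L$ is estimated exactly as on $\Re s=1$, now with $\widehat\Phi$, whose pullback is the Fourier transform of $\Psi$ twisted by the dual of $\iota$. This produces the reciprocal factor: $|\det h|_k^{-1}$ times the dual volume ratio. The $\epsilon$-factor is controlled by \eqref{e:tate-epsilon}, which gives $|\epsilon|^{-1}\ll(\disc(\nu)\disc(A/k))^{-1/2}$.

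Interpolating by the three-lines theorem at $s=1/2$ takes the geometric mean of the two bounds. The powers of $|\det h|_k$ then combine to $|\det h|_k^{1/2}$ (or its inverse, depending on the normalization fixed above), which the prefactor in the statement cancels. What remains is $(\vol(\mathcal O_A)/\vol(\mathcal O_k)^2)^{1/2}(\disc(\nu)\disc(A/k))^{-1/4}$, as claimed.

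The main obstacle is the measure bookkeeping. One must check that the self-dual measure on $A$ for $e_A$, the Fourier transform of $\Psi\circ\iota$, and the norm-matching element $h$ fit together so that the volume ratios on the two lines are mutually inverse up to powers of $|\det h|$ and constants depending only on $e$ and $k$. In particular, $\vol(\mathcal O_A)$ with respect to the self-dual measure equals $\disc(A/k)^{-1/2}$ times a constant, and this must not be counted twice against the $\epsilon$-bound.

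Uniformity for $\Psi$ in a fixed finite-dimensional space follows because only $\|\Psi\|_{L^1}$ and $\|\widehat\Psi\|_{L^1}$ enter. Each of these is a continuous seminorm, hence bounded on the unit ball of such a space.
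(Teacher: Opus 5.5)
Your overall route is the paper's. It uses the trivial $L^1$ bound on $\Re s=1$, Tate's functional equation with \eqref{e:tate-epsilon} on $\Re s=0$, and maximum modulus at $s=\tfrac12$. (The paper uses Hadamard's three circles in $z=q_k^{-s}$ on $q_k^{-1}\le|z|\le1$, which is the same as your periodic three-lines argument.) It then rescales by $|\det h|_k^{-1/2}$.

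The problem lies in the step you yourself single out as the crux, the bookkeeping on $\Re s=0$; as written it produces the wrong bound. You assert that the Fourier side yields the reciprocal factor $|\det h|_k^{-1}$ times a dual volume ratio. But then the geometric mean of your two boundary bounds carries $|\det h|_k^{\frac12\cdot1+\frac12\cdot(-1)}=|\det h|_k^{0}$, not $|\det h|_k^{\pm1/2}$. So the prefactor $|\det h|_k^{-1/2}$ in the statement is left uncancelled, and $|\det h|_k$ is unbounded as $\iota$ varies. Your sentence that the powers combine to $|\det h|_k^{1/2}$ contradicts your own two bounds. No choice of sign convention fixes this, because reciprocal factors always cancel in a geometric mean.

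In fact the Fourier side carries no dependence on $\iota$ at all. Put $c:=\iota_*\vol_A/\vol_{k^2}$, so $c=|\det h|_k\vol(\mathcal O_A)/\vol(\mathcal O_k)^2$ in the paper's convention. Define $\iota^{*}\colon A\to k^2$ by $\operatorname{Tr}_{A/k}(\iota^{-1}(u)\,y)=\langle u,\iota^{*}y\rangle$ for the standard pairing on $k^2$. A change of variables gives $\widehat\Phi=c\,(\widehat\Psi\circ\iota^{*})$; you accounted for the Jacobian of $\iota^{*}$ but dropped this scalar $c$.

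Now write $\iota^{*}_{*}\vol_A=c^{*}\vol_{k^2}$. Then $\|\Phi\|_2^2=c\|\Psi\|_2^2$ and $\|\widehat\Phi\|_2^2=c^2c^{*}\|\widehat\Psi\|_2^2$. Plancherel for the self-dual measures on $A$ and on $k^2$ forces $c^{*}=c^{-1}$, hence $\|\widehat\Phi\|_{L^1(A)}=\|\widehat\Psi\|_{L^1(k^2)}$.

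So on $\Re s=0$ one simply has $|G(it)|\ll_{\Psi,e,k}(\disc(\nu)\disc(A/k))^{-1/2}$, with no volume factor; this is exactly the boundary bound the paper uses. Interpolation then gives $|Z(\Phi,\nu,\tfrac12)|\ll c^{1/2}(\disc(\nu)\disc(A/k))^{-1/4}$. The prefactor $|\det h|_k^{-1/2}$ cancels precisely the $|\det h|_k^{1/2}$ inside $c^{1/2}$.

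This also settles your double-counting worry: $\disc(A/k)$ enters only through the $\epsilon$-factor, and $\vol(\mathcal O_A)$ only through the line $\Re s=1$. With this correction, your closing remark on uniformity (only $\|\Psi\|_{L^1}$ and $\|\widehat\Psi\|_{L^1}$ enter) becomes accurate.
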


\begin{proof}
We use the Tate local functional equation in the form recorded in
Lemma~\ref{lem:tate-local-functional-equation}, which is the local input isolated in
\cite[Lemma~9.18]{ELMV3}.  The interpolation below is the non-archimedean
maximum-modulus argument indicated after \cite[Proposition~9.9]{ELMV3}. Put
\[
        Z(s):=Z(\Phi,\nu,s).
\]
On $\Re(s)=1$, the trivial $L^1$-bound gives
\[
        |Z(s)|
        \ll_{\Psi,k}
        \frac{\iota_*\vol_A}{\vol_{k^2}}.
\]
On $\Re(s)=0$, Lemma~\ref{lem:tate-local-functional-equation} and the conductor formula for the
epsilon factor \eqref{e:tate-epsilon} give
\[
        \left|
        \frac{Z(s)}
             {L(A,\nu,s)/L(A,\nu^{-1},1-s)}
        \right|
        \ll_{\Psi,e,k}
        \bigl(\disc(\nu)\disc(A/k)\bigr)^{-1/2}.
\]
Equivalently, for
\[
        G(z):=L(A,\nu,s)^{-1}Z(s)
        \qquad
        \text{where}
        \quad
        z=q_k^{-s},
\]
we have the above two boundary bounds on the annulus
$q_k^{-1}\le |z|\le 1$.
Since $k$ is non-archimedean, the local zeta integral and the local $L$-factors are
rational functions of $z=q_k^{-s}$, and the normalized integral
$L(A,\nu,s)^{-1}Z(\Phi,\nu,s)$ is a Laurent polynomial in $z$
\cite[\S2]{tate-thesis}; in particular
$G(z)$ is holomorphic on the annulus $q_k^{-1}\le|z|\le1$. Hadamard's
three-circles theorem at $|z|=q_k^{-1/2}$ gives
\[
        |Z(1/2)|
        \ll_{\Psi,e,k}
        \left(
        \frac{\iota_*\vol_A}{\vol_{k^2}}
        \right)^{1/2}
        \bigl(\disc(\nu)\disc(A/k)\bigr)^{-1/4},
\]
using that $L(A,\nu,1/2)$ is uniformly bounded for unitary $\nu$ over the fixed
local field $k$.  Finally,
\[
        \frac{\iota_*\vol_A}{\vol_{k^2}}
        =
        \frac{\vol(\mathcal O_A)}
             {\vol(\mathcal O_k)^2}
        |\det h|_k.
\]
Multiplying by $|\det h|_k^{-1/2}$ gives the claim.
\end{proof}

We call a pair $(\Phi,\omega)$ a {\em level-$U$ Eisenstein datum} if $\omega$ is unitary and
$\Phi=\otimes_v'\Phi_v$ is a decomposable $U$-fixed section in the corresponding
principal series.  

\begin{proposition}
\label{prop:eis-local-subconvex}
There exists $\eta>0$, depending only on the fixed level $U$, such that for every
level-$U$ Eisenstein datum $(\Phi,\omega)$, every admissible CM datum $(E,R)$, and
every $\varepsilon>0$,
\[
        \frac{
        \prod_{v\neq\infty}
        \left|
        \mathcal Z_v^\natural(\Phi_{v,E,R},\omega_v)
        \right|
        }{
        \vol(K_{T,R})
        }
\ll_{\Phi,U,\varepsilon}D(E,R)^\varepsilon\Delta_R^{1/2-\eta}.
\]
\end{proposition}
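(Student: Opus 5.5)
The plan is to prove the bound place by place and multiply, exactly as in the proof of Proposition~\ref{prop:local-waldspurger-factor-estimate}. We split the finite places into three classes: \emph{unramified places}, where $E_v/F_v$, $\omega_v$, $\Phi_v$ and $R_v=\mathcal O_{E_v}$ are all unramified and $K_{f,v}$ is maximal; \emph{fixed level places}, where $K_{f,v}$ is not maximal, a finite set depending only on $U$; and the remaining \emph{CM-ramified places}, where $a_v(E)>0$ or $b_v(E,R)>0$. At unramified places, Lemma~\ref{lem:eis-unfolding} gives $\mathcal Z_v^\natural=1$. Also $\vol(K_{T,R,v})=\vol(\mathcal O_{E_v}^\times/\mathcal O_v^\times)$ there, and this equals $1$ under the standard normalization. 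So the product is finite, and it suffices to bound the local ratio
\[
        \frac{|\mathcal Z_v^\natural(\Phi_{v,E,R},\omega_v)|}{\vol(K_{T,R,v})}
\]
by $C_v (n_v(E,R)+1)^{A}q_v^{(1/2-\eta)b_v(E,R)}$. Here $C_v=1$ outside the fixed level set. The factors $(n_v+1)^A$ are then absorbed into $D(E,R)^\varepsilon$ by \eqref{eq:divisor-factor-bound}.

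At a CM-ramified or level place, we first note that $U$-invariance of $\Phi$ forces $\omega_v$ to be trivial on a subgroup $1+\mathfrak p_v^{m_v}$ with $m_v$ depending only on $K_{f,v}$. This gives $\disc(\omega_v)\ll_U 1$ and keeps the ratio $L_{F_v}(1,\omega_v^2)/L_{E_v}(\tfrac12,\cdot)$ bounded above and below for unitary $\omega$. We then rewrite the local toric integral $\int_{F_v^\times\backslash E_v^\times}\Phi_v(\iota(t)g_{R,v})\,dt$ as a Tate integral. A flat section in the principal series is obtained from a Bruhat--Schwartz function $\Psi_v$ on $F_v^2$ via the Godement section $g\mapsto|\det g|^{1/2}\int\Psi_v((0,a)g)\omega_v^{-1}(a)|a|\,d^\times a$. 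Restricting to the torus via $x\mapsto x^\iota=(0,1)\iota(x)g_{R,v}$ turns the local factor into
\[
        |\det h|^{-1/2}\int_{E_v^\times}\Psi_v(x^\iota)\nu(x)|x|^{1/2}\,d^\times x,
        \qquad
        \nu=\omega_v^{-1}\circ N,
\]
where $h$ measures the position of the lattice $(0,1)\iota(\mathcal O_{E_v})g_{R,v}$ relative to $\mathcal O_v^2$. By realizability, the stabilizer order of that lattice is $R_v$. Since $\Psi_v$ ranges over a fixed finite-dimensional space determined by $U$, Lemma~\ref{lem:tate-interpolation-estimate} applies. It bounds this quantity by
\[
        \Bigl(\frac{\vol(\mathcal O_{E_v})}{\vol(\mathcal O_v)^2}\Bigr)^{1/2}(\disc\nu\,\disc(E_v/F_v))^{-1/4}.
\]

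The key step, and the main obstacle, is the bookkeeping of discriminants and volumes. We must show that this quantity, divided by $\vol(K_{T,R,v})\asymp q_v^{-a_v/2-b_v/2}$ (from \eqref{eq:toric-level-volume-comparison} and \eqref{eq:maximal-unit-volume-field-discriminant}), is at most $q_v^{(1/2-\eta)b_v}$. The field part should cancel exactly. The self-dual volume gives $\vol(\mathcal O_{E_v})^{1/2}\asymp q_v^{-a_v/4}$, and $\disc(E_v/F_v)^{-1/4}=q_v^{-a_v/4}$, which together match $q_v^{-a_v/2}$. The order part is the delicate point. Here I would use that $\disc\nu$ in Lemma~\ref{lem:tate-interpolation-estimate} is really the discriminant attached to the lattice $\iota^{-1}N_A$ after translating by $g_{R,v}$, as in \cite[Prop.~9.9]{ELMV3}. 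In that normalization, the non-maximal order contributes an extra saving $q_v^{-b_v/4}$ beyond the trivial bound, giving the ratio $\ll q_v^{b_v/2-b_v/4}$. This yields any $\eta\le1/4$. If this identification fails, the fallback is the trivial $L^1$ bound for the $b_v$-aspect, which yields exactly $q_v^{b_v/2}$. That does not suffice, so one would instead import the $b$-aspect saving directly from the second estimate of \cite[Prop.~9.9]{ELMV3}, taking $\eta$ as its exponent. In residue characteristic $2$ the only loss is an additive $O(1)$ in the distance estimate, as in Remark~\ref{rem:elmv-small-characteristic}. This affects only $C_v$.

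Finally, we multiply the local bounds over the finitely many relevant places. The constants $C_v$ are supported on the fixed level set, and $\prod_v q_v^{(1/2-\eta)b_v}=\Delta_R^{1/2-\eta}$. This gives the proposition with $\eta$ depending only on $U$.
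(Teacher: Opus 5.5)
\textbf{There is a genuine gap at the step you flag as the main obstacle: the saving in the order aspect.}

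Lemma~\ref{lem:tate-interpolation-estimate} cannot see $b_v(E,R)$. Its right-hand side is independent of $h$, hence of $g_{R,v}$. It depends only on $E_v/F_v$ and on $\disc(\nu_v)$. That is the discriminant of the character $\nu_v=\omega_v^{\pm1}\circ N_{E_v/F_v}$ of $E_v^\times$, which depends only on $\omega_v$ and $E_v$, not on the lattice $\iota^{-1}N_A$. You observed yourself that it is $\ll_U 1$ in the $\omega$-aspect.

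Consequently, dividing by $\vol(K_{T,R,v})\asymp q_v^{-a_v(E)/2-b_v(E,R)/2}$ gives exactly $q_v^{b_v(E,R)/2}=\Delta_v^{1/2}$ and nothing better. This is estimate (3) in the paper's proof. There it serves only to make the local bound uniform in the ramification of $E_v/F_v$ and of $\nu_v$. Your claimed extra factor $q_v^{-b_v/4}$, and hence ``any $\eta\le 1/4$'', therefore has no basis.

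Your fallback does not close the gap either. The ``second estimate'' of \cite[Prop.~9.9]{ELMV3} is the Tate functional equation plus maximum-modulus bound. That is precisely Lemma~\ref{lem:tate-interpolation-estimate}, so invoking it again yields no $b_v$-saving.

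\textbf{What is missing is the \emph{first} estimate of \cite[Prop.~9.9]{ELMV3}.} Apply it to the $n=2$ local integral obtained from the Schwartz-function realization of \cite[\S10.1]{ELMV3} and the unfolding \cite[Lemma~10.4]{ELMV3}, with $\Lambda_v=R_v$ by realizability. Together with Lemma~\ref{lem:local-fixed-level-comparison} and the order-volume formula, it gives
\[
\frac{|\mathcal Z_v^\natural(\Phi_{v,E,R},\omega_v)|}{\vol(K_{T,R,v})}\ll (n_v(E,R)+1)^{A_0}\,\Delta_v^{1/2-\eta}.
\]
This is a genuine power saving in the relative discriminant of the order, reflecting the position of $g_{R,v}$ relative to the torus. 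It is the Eisenstein analogue of Lemma~\ref{lem:elmv-local-building-integral-n2} for cusp forms. It is this bound, not the Tate bound, that the paper multiplies over places.

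\textbf{A smaller point.} At places outside the fixed level set with $n_v(E,R)\ge 1$ you cannot claim $C_v=1$. The local implied constants there are only uniformly bounded. They should be absorbed into $(n_v(E,R)+1)^{A_0}$ using $n_v\ge1$, and then into $D(E,R)^\varepsilon$ via \eqref{eq:divisor-factor-bound}.
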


\begin{proof}
For each finite place $v$, we set
\[
        \Lambda_v
        :=
        E_v\cap g_{E,R,v}M_2(\mathcal O_v)g_{E,R,v}^{-1},
        \qquad
        \Delta_v
        :=
        \frac{\disc(\Lambda_v)}
             {\disc(\mathcal O_{E_v})},
        \qquad
        r_v:=\operatorname{ord}_v\Delta_v .
\]
Consider
\[
        \nu_v:=\omega_v\circ N_{E_v/F_v}.
\]
Using the Schwartz-function realization of Eisenstein series in
\cite[\S10.1]{ELMV3} and the torus-unfolding formula
\cite[Lemma~10.4]{ELMV3}, the local factor
$\mathcal Z_v^\natural(\Phi_{v,E,R},\omega_v)$ is, after the normalization
already included in its definition, the $n=2$ local integral of
\cite[Prop.~9.9]{ELMV3}, with
\[
        k=F_v,\qquad
        A=E_v,\qquad
        \psi=\omega_v\circ N_{E_v/F_v}.
\]

Under this identification, the order occurring in {\em loc.~cit} is $\Lambda_v$,
and the relative discriminant appearing there is comparable to $\Delta_v$,
with constants depending only on $n=2$.

Outside a fixed finite set of places determined by $(\Phi,U)$, the normalized
local quotient is $1$ when $n_v(E,R)=0$.  At every other varying place one has
$n_v(E,R)\geq1$; hence, after increasing $A_0$, the uniform local implied
constants may be absorbed into the factor $(n_v(E,R)+1)^{A_0}$.
The first estimate of \cite[Prop.~9.9]{ELMV3}, together with
Lemma~\ref{lem:local-fixed-level-comparison} and the local order-volume formula,
therefore gives, after enlarging $A_0$ and decreasing $\eta>0$ once and for all,
\[
        \frac{
        \left|
        \mathcal Z_v^\natural(\Phi_{v,E,R},\omega_v)
        \right|
        }{
        \vol(K_{T,R,v})
        }
        \ll_{\Phi_v,K_{f,v}}(n_v(E,R)+1)^{A_0}\Delta_v^{1/2-\eta}.
\tag{1}
\]
Lemma~\ref{lem:tate-interpolation-estimate} gives the complementary bound
\[
        \left|
        \mathcal Z_v^\natural(\Phi_{v,E,R},\omega_v)
        \right|
        \ll_{\Phi_v,K_{f,v}}
        \left(
        \frac{\vol(\mathcal O_{E_v})}
             {\vol(\mathcal O_v)^2}
        \right)^{1/2}
        \bigl(
        \disc(\nu_v)\disc(E_v/F_v)
        \bigr)^{-1/4},
\tag{2}
\]
with the convention $\disc(E_v/F_v)=1$ in the split case.  With the self-dual
measure normalizations of \cite[\S8]{ELMV3}, relation
\cite[(32)]{ELMV3} gives
\[
        \left(
        \frac{\vol(\mathcal O_{E_v})}
             {\vol(\mathcal O_v)^2}
        \right)^{1/2}
        \disc(E_v/F_v)^{-1/4}
        \asymp_F
        \vol(\mathcal O_{E_v}^{\times}/\mathcal O_v^\times).
\]
Consequently, again by the local order-volume comparison,
\[
        \frac{
        \left|
        \mathcal Z_v^\natural(\Phi_{v,E,R},\omega_v)
        \right|
        }{
        \vol(K_{T,R,v})
        }
        \ll_{\Phi_v,K_{f,v}}
        \Delta_v^{1/2}\disc(\nu_v)^{-1/4}
        \leq
        \Delta_v^{1/2}.
\tag{3}
\]
Thus the second estimate is uniform in the ramification of $E_v/F_v$; no upper
bound for $\disc(\nu_v)$ is required, since its exponent in $(2)$ is negative.
The order-saving estimate used in the product below is $(1)$.

Multiplying the preceding local bound over all finite places gives
\[
        \frac{
        \prod_{v\neq\infty}
        \left|
        \mathcal Z_v^\natural(\Phi_{v,E,R},\omega_v)
        \right|
        }{
        \vol(K_{T,R})
        }
        \ll_{\Phi,U}
        \left(\prod_{v\neq\infty}\Delta_v\right)^{1/2-\eta}
        \prod_{v\neq\infty}(n_v(E,R)+1)^{A_0}.
\]
By the realizability condition \eqref{e:realizable}, the order $\Lambda_v$ is
$R_v$ at every finite place.  Hence
\[
        \Delta_v
        =
        \frac{\disc(R_v)}{\disc(\mathcal O_{E_v})}
        =
        q_v^{b_v(E,R)},
        \qquad
        r_v=b_v(E,R),
\]
and therefore
\[
        \prod_{v\neq\infty}\Delta_v
        =
        \prod_{v\neq\infty}q_v^{b_v(E,R)}
        =
        \Delta_R.
\]
By the divisor bound \eqref{eq:divisor-factor-bound}, applied to the effective
divisor $\mathfrak D_{E,R}$, one has, for every $\varepsilon>0$,
\[
        \prod_{v\neq\infty}(n_v(E,R)+1)^{A_0}
        \ll_{F,A_0,\varepsilon}
        D(E,R)^\varepsilon =D_E^\varepsilon\Delta_R^\varepsilon.
\]
Therefore
\[
        \frac{
        \prod_{v\neq\infty}
        \left|
        \mathcal Z_v^\natural(\Phi_{v,E,R},\omega_v)
        \right|
        }{
        \vol(K_{T,R})
        }
        \ll_{\Phi,U,\varepsilon}
        D_E^\varepsilon
        \Delta_R^{1/2-\eta+\varepsilon}.
\]
\end{proof}

\begin{lemma}
\label{lem:eisenstein-weyl-sum-decay}
Assume $D\simeq M_2(F)$.  For every $\varphi\in C_c^\infty(X_U)$,
\[
    W(P_{\mathrm{Eis}}\varphi;E,R)\rightarrow 0
    \qquad
    \text{as}
    \quad
    \deg\mathfrak D_{E,R}\rightarrow\infty .
\]
\end{lemma}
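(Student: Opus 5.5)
The plan is to expand $P_{\mathrm{Eis}}\varphi$ as a wave-packet integral of Eisenstein series and to bound the Weyl sum of each constituent uniformly over the spectral parameter. Since $\varphi$ is compactly supported and right invariant under a compact open $J\subset U$, its Eisenstein projection is a finite sum over level-$J$ Eisenstein data, with $\Phi$ ranging over a fixed finite-dimensional space and $\omega$ over unitary characters of bounded conductor. Over a function field the continuous parameter lives on a compact circle $s\in i\mathbb R/(2\pi i/\log q)\mathbb Z$. Explicitly, I would write
\[
P_{\mathrm{Eis}}\varphi(g)=\sum_{\omega}\sum_{\Phi}\int_{0}^{2\pi/\log q}\langle \varphi,\mathcal E(\cdot,\Phi,\omega|\cdot|^{is})\rangle\,\mathcal E(g,\Phi,\omega|\cdot|^{is})\,ds,
\]
where the sum over $\omega$ is finite because $\omega$ is unramified outside the level and its conductor is bounded. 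The coefficients $\langle\varphi,\mathcal E\rangle$ are smooth in $s$ and bounded, since $\varphi\in C_c^\infty$ and truncated Eisenstein series are uniformly controlled on compacta. Interchanging the packet average with the $s$-integral, which is legitimate because everything is continuous on a compact parameter domain and the packet is finite, reduces the lemma to the bound
\[
\sup_{s}\,\bigl|W(\mathcal E(\cdot,\Phi,\omega|\cdot|^{is});E,R)\bigr|\ll D(E,R)^{-\delta'},
\]
uniformly in $s$, together with an integrability factor $|L_F(1,\omega^2|\cdot|^{2is})|^{-1}$.

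For each constituent, the packet-sum identity \eqref{eq:compare-weyl-period} with $\chi=1$ gives
\[
W(\mathcal E;E,R)=\frac{w_{E,R}}{\vol(K_T)\,\#\calp_{E,R}}\,\mathcal P_{E,R}(\Phi,\omega).
\]
Lemma~\ref{lem:eis-unfolding} then factors $\mathcal P_{E,R}$ as the quotient $L_E(\tfrac12,\omega\circ N_{E/F})/L_F(1,\omega^2)$ times $\prod_v\mathcal Z_v^\natural$. The factor $w_{E,R}$ is at most $q+1$. The place $\infty$ contributes a fixed factor, by the same argument as in Lemma~\ref{l:cuspidal-weyl-sum-decay}: the torus is fixed, $T_E(F_\infty)$ has fixed volume, and $\chi_\infty=1$.

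The numerator $L_E(\tfrac12,\omega\circ N_{E/F})$ is a Hecke $L$-function of $E$, equivalently $L_F(\tfrac12,\omega)L_F(\tfrac12,\omega\eta_E)$. By Weil's RH and the Littlewood argument used in Proposition~\ref{prop:function field-lindelof-input}, it is $\ll D_E^{\varepsilon}$ uniformly in $s$, since the degree of the polynomial is $O(\deg\mathfrak D_{E/F}+1)$. For the finite local factors, Proposition~\ref{prop:eis-local-subconvex} gives $\prod_{v\neq\infty}|\mathcal Z_v^\natural|/\vol(K_{T,R})\ll D(E,R)^\varepsilon\Delta_R^{1/2-\eta}$. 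Lemma~\ref{lem:packet-size-lower-bound} gives $\#\calp_{E,R}\gg D(E,R)^{1/2-\varepsilon}=(D_E\Delta_R)^{1/2-\varepsilon}$. Combining these yields
\[
|W|\ll |L_F(1,\omega^2|\cdot|^{2is})|^{-1}\,D_E^{-1/2+\varepsilon}\,\Delta_R^{-\eta+\varepsilon},
\]
which is $\ll D(E,R)^{-\min(1/2,\eta)+2\varepsilon}$ times the $L_F(1)$ factor.

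The main obstacle is the factor $1/L_F(1,\omega^2|\cdot|^{2is})$. It blows up where $\omega^2|\cdot|^{2is}$ is trivial, that is, at the poles of $L_F(s,1_F)$ where Eisenstein series have residues. This must be matched against the vanishing of the spectral coefficient. I would first use the normalization of $\mathcal E$: the normalized Eisenstein series includes $L_F(1,\omega^2)$, so the spectral density carries $|L_F(1,\omega^2|\cdot|^{2is})|^{-2}$ in the Plancherel measure. One power of this cancels the singularity above, and the remaining power makes the $s$-integral absolutely convergent, being a rational function in $q^{-2is}$ whose zero cancels the pole. If that bookkeeping fails, the fallback is to remove a small neighbourhood of the finitely many bad $s$. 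On that neighbourhood I would use the crude bound $|W(\mathcal E)|\le\sup_{\mathrm{supp}}|\mathcal E|$, which fails since $\mathcal E$ is unbounded. I would therefore instead use $W(P_{\mathrm{Eis}}\varphi)=W(\varphi)-W(\text{discrete part})$ restricted to a truncation, with its smallness coming from continuity of $\langle\varphi,\mathcal E\rangle$. In either case, integrating the uniform bound over the compact $s$-circle and summing the finitely many $(\omega,\Phi)$ gives $W(P_{\mathrm{Eis}}\varphi;E,R)\ll_{\varphi}D(E,R)^{-\delta'}\to0$.
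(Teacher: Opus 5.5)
Your argument follows the paper's proof step for step. You expand $P_{\mathrm{Eis}}\varphi$ over the fixed bounded-conductor unitary Eisenstein spectrum and turn each Weyl sum into a toric period via \eqref{eq:compare-weyl-period}. You then unfold with Lemma~\ref{lem:eis-unfolding}, bound $L_E(\tfrac12,\omega\circ N_{E/F})\ll D_E^{\varepsilon}$ by RH, and bound the finite local factors by Proposition~\ref{prop:eis-local-subconvex}. Dividing by the lower bound of Lemma~\ref{lem:packet-size-lower-bound} gives $D_E^{-1/2+\varepsilon}\Delta_R^{-\eta+\varepsilon}$ uniformly in the spectral parameter, and you integrate.

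\emph{Your ``main obstacle'' is a misdiagnosis.} The factor $|L_F(1,\omega^2|\cdot|^{2is})|^{-1}$ does not blow up anywhere; it is uniformly bounded, which is exactly the paper's assertion $L_F(1,\omega^2)^{-1}\ll_U 1$.
\begin{itemize}
\item Where $\omega^2|\cdot|^{2is}$ is trivial, $L_F(1,\cdot)$ sits at the pole of $\zeta_F$ at $s=1$. Its reciprocal therefore vanishes: with $Z_F(u)=P(u)/((1-u)(1-qu))$, one gets $|Z_F(q^{-1-it})|^{-1}\le 4(1-q^{-1/2})^{-2g}$.
\item For a character nontrivial on $\mathbb A_F^1$, $L_F(s,\cdot)$ is a polynomial in $q^{-s}$ of degree $N$, bounded by the conductor. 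Its inverse roots have absolute value $q^{1/2}$ by Weil, so $|L_F(1+it,\cdot)|\ge(1-q^{-1/2})^{N}$.
\item Only finitely many $\omega$ modulo unramified twists occur, so the bound is uniform on the compact parameter circle.
\end{itemize}
Similarly, the poles of the Eisenstein series lie off the unitary axis (they produce the one-dimensional residual spectrum). Hence $s\mapsto\langle\varphi,\mathcal E\rangle\,W(\mathcal E;E,R)$ is continuous and uniformly $O(D(E,R)^{-\delta})$, and the integration step is immediate.

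You should drop the Plancherel-density cancellation bookkeeping, which mixes the two normalizations of $\mathcal E$. You should also drop the truncation fallback: it is circular, since it rewrites the quantity through $W(\varphi)$, the very thing being controlled. A minor slip: $J$ cannot be taken inside $U$, because $U_\infty=T_\star(F_\infty)$ is compact but not open. Take $J$ compact open in $G(\mathbb A_F)$, as the paper does.
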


\begin{proof}
Since $\varphi$ is smooth, its lift to $[G]$ is fixed by some compact open
subgroup $J\subset G(\mathbb A_F)$. The Eisenstein subspace is
invariant under the right regular representation. Thus, $P_{\mathrm{Eis}}$ commutes with
right translations, and only $J$-fixed Eisenstein data occur in the spectral
expansion of $P_{\mathrm{Eis}}\varphi$.

It is enough first to treat a normalized Eisenstein series
$\mathcal E(g,\Phi,\omega)$ occurring in the fixed $U$-level unitary Eisenstein
spectrum.  By the packet-period comparison and the bound $w_{E,R}\ll 1$,
\[
        \left|
        W(\mathcal E(\cdot,\Phi,\omega);E,R)
        \right|
        \ll_U
        \frac{
        \left|\mathcal P_{E,R}(\Phi,\omega)\right|
        }{
        \operatorname{vol}(K_T)\,\#\calp_{E,R}
        }.
\]
By the unfolding formula, Lemma~\ref{lem:eis-unfolding},
\[
        \mathcal P_{E,R}(\Phi,\omega)
        =
        \frac{
        L_E\!\left(\frac12,\omega\circ N_{E/F}\right)
        }{
        L_F(1,\omega^2)
        }
        \prod_v
        \mathcal Z_v^\natural(\Phi_{v,E,R},\omega_v).
\]
Since the corresponding principal series has a nonzero $J$-fixed vector, the
conductor of $\omega$ is bounded in terms of $J$.  Hence the function field Riemann hypothesis for abelian $L$-functions \cite{deligne-weilII}
gives, for every $\varepsilon>0$,
\[
        L_E\!\left(\frac12,\omega\circ N_{E/F}\right)
        \ll_{U,\varepsilon}
        D_E^\varepsilon .
\]
The constant residual representation has been separated off, so on the remaining
unitary Eisenstein spectrum
\[
        L_F(1,\omega^2)^{-1}\ll_U 1 .
\]

The factor at $\infty$ and the infinite toric volume are fixed by
$E_\infty\simeq F_{\infty,\star}$ and $U_\infty=T_\star(F_\infty)$, hence are absorbed into
the implied constant.  
Applying Proposition~\ref{prop:eis-local-subconvex} with
$\varepsilon/3$, we obtain
\[
        \frac{
        \prod_v
        \left|
        \mathcal Z_v^\natural(\Phi_{v,E,R},\omega_v)
        \right|
        }{
        \operatorname{vol}(K_T)
        }
        \ll_{\Phi,U,\varepsilon}
        D_E^{\varepsilon/3}
        \Delta_R^{1/2-\eta+\varepsilon/3}.
\]
Combining this with the preceding abelian $L$-value estimate, also applied with
$\varepsilon/3$, gives
\[
        \left|
        W(\mathcal E(\cdot,\Phi,\omega);E,R)
        \right|
        \ll_{\Phi,U,\varepsilon}
        D_E^{2\varepsilon/3}
        \frac{
        \Delta_R^{1/2-\eta+\varepsilon/3}
        }{
        \#\calp_{E,R}
        }.
\]
By Lemma~\ref{lem:packet-size-lower-bound} we have
$\#\calp_{E,R}
        \gg_{F,U,\varepsilon}
        (D_E\Delta_R)^{1/2-\varepsilon/3}$.
Therefore
\[
        \left|
        W(\mathcal E(\cdot,\Phi,\omega);E,R)
        \right|
        \ll_{\Phi,U,\varepsilon}
        D_E^{-1/2+\varepsilon}
        \Delta_R^{-\eta+\varepsilon}.
\]
Choosing $0<\delta<\min\{1/2,\eta\}$ gives
$\left|
        W(\mathcal E(\cdot,\Phi,\omega);E,R)
        \right|
        \ll_{\Phi,U,\varepsilon}
        D(E,R)^{-\delta+\varepsilon}$.

Finally, by  \cite[VI.2.7]{moeglin-waldspurger}, the Eisenstein projection
$P_{\mathrm{Eis}}\varphi$ is represented by an integral over the fixed $U$-level
unitary Eisenstein spectrum.  Since $\varphi$ is fixed, its Eisenstein spectral
coefficients are integrable, and the preceding bound is uniform on this fixed
spectrum.  Integrating gives
$W(P_{\mathrm{Eis}}\varphi;E,R)
        \rightarrow 0$.
\end{proof}

\subsubsection{Proof of the split theorem}
\label{subsubsec:proof-split-theorem}

\begin{proof}[Proof of Theorem~\ref{thm:split-adic-duke}]
The finite-component hypothesis
$\eta_{E_n}\notin\mathfrak X_U-\{1\}$ for $n\gg 0$, by Corollary~\ref{cor:component-character-decay}, that
\[
        W(\xi_\eta;E_n,R_n)\rightarrow 0
\]
for every nontrivial component character $\xi_\eta$.

Let $h\in C_c^\infty(X_U)$.  Its lift to $[G]$ is locally constant and right
$U$-invariant, hence is fixed by some compact open subgroup $J\subset G(\mathbb A_F)$.
Since the cuspidal projection commutes with right translations,
\[
        h_{\mathrm{cusp}}:=P_{\mathrm{cusp}}h
        \in L^2_{\mathrm{cusp}}([G])^J\cap L^2([G])^U .
\]
For $G=\PGL_2$ over a function field, $L^2_{\mathrm{cusp}}([G])^J$ is
finite-dimensional.  Thus $h_{\mathrm{cusp}}$ is a finite linear combination of
$U$-fixed cuspidal automorphic vectors, and Lemma~\ref{l:cuspidal-weyl-sum-decay},
with the trivial packet character $\chi=1$, gives $W(h_{\mathrm{cusp}};E_n,R_n)\rightarrow 0$.
The Eisenstein contribution is handled by
Lemma~\ref{lem:eisenstein-weyl-sum-decay}, namely
$W(h_{\mathrm{Eis}};E_n,R_n)\rightarrow 0$.
Therefore all hypotheses of Proposition~\ref{prop:spectral-reduction-noncompact}
are satisfied.  
\end{proof}

\end{document}